
\documentclass{amsart}
\usepackage{amsmath,amscd,amsthm,amsfonts,a4wide,amssymb}
\usepackage{diagrams}

  

\theoremstyle{plain}
\newtheorem{theorem}                 {Theorem}      [section]
\newtheorem{proposition}  [theorem]  {Proposition}
\newtheorem{corollary}    [theorem]  {Corollary}
\newtheorem{lemma}        [theorem]  {Lemma}

\theoremstyle{definition}
\newtheorem{example}      [theorem]  {Example}
\newtheorem{remark}       [theorem]  {Remark}
\newtheorem{definition}   [theorem]  {Definition}

\numberwithin{equation}{section}

\DeclareMathOperator{\Ima}{Im}
\DeclareMathOperator{\spa}{span}

\DeclareMathOperator{\rank}{rank}


	  	
\newcommand{\wt}{\widetilde}
\newcommand{\wh}{\widehat}
\newcommand{\cc}{\mathrm c}  
\newcommand{\pa}{\partial}

\def \nn{\mathbb N}
\def \zn{\mathbb Z}
\def \rn{\mathbb R}
\def \cn{\mathbb C}
\def \hn{\mathbb H}
\def \H{\mathcal H}

\def \HH{\underline{\mathcal H}}

\def \Gr{Gr}

\def \g{\mathfrak{g}}

\def \O#1{{\rm O}(#1)}
\def \SO#1{{\rm SO}(#1)}

\def \U#1{{\rm U}(#1)}

\def \Sp#1{{\rm Sp}(#1)}

\def \d{\mathrm{d}}
\def\ip#1#2{\langle#1,#2\rangle}

\def \CP#1{\mathbb{C}P^{#1}}
\def \RP#1{\mathbb{R}P^{#1}}
\def \gl#1{\mathfrak{gl}(#1)}

	
\newcommand{\zbar}{\bar{z}}
\newcommand{\CC}{\underline{\mathbb C}}
\newcommand{\ov}{\overline}
\newcommand{\ul}{\underline}
\newcommand{\alg}{{\rm alg}}


\newcommand{\ii}{\mathrm{i}} 


\begin{document}
\larger[1]

\begin{footnotesize}
\begin{flushright}
CP3-ORIGINS-2010-16
\end{flushright}
\end{footnotesize}
\bigskip

\title[Filtrations, factorizations and harmonic maps]{Filtrations, factorizations and explicit formulae for harmonic maps}

\author{Martin Svensson}
\author{John C. Wood}

\keywords{harmonic map, Grassmannian model, nonlinear sigma model, uniton}

\subjclass[2000]{53C43, 58E20}

\thanks{The first author was supported by the Danish Council for Independent Research and the Danish National Research Foundation. 
The second author thanks the Department of Mathematics and Computer Science of the University of Southern Denmark, Odense, for support and hospitality during the preparation of this work.}

\address
{Department of Mathematics \& Computer Science, University of
Southern Denmark, and CP3-Origins, Centre of Excellence for Particle Physics Phenomenology, Campusvej 55, DK-5230 Odense M, Denmark}
\email{svensson@imada.sdu.dk}

\address
{Department of Pure Mathematics, University of Leeds, Leeds LS2 9JT, Great Britain}
\email{j.c.wood@leeds.ac.uk}

\begin{abstract}  
We use filtrations of the Grassmannian model to produce explicit algebraic 
formulae for all harmonic maps of finite uniton number from a 
 Riemann surface, and so all harmonic maps from the $2$-sphere, to the unitary group for a general class of factorizations by unitons.  We show how these specialize to give explicit formulae for such harmonic maps to each of the
 classical compact Lie groups and their inner symmetric spaces --- the nonlinear $\sigma$-model of particle physics.  Our methods also give an explicit Iwasawa decomposition of the algebraic loop group.
\end{abstract}

\maketitle

\section{Introduction}
 
In \cite{uhlenbeck}, K.~Uhlenbeck showed how to construct harmonic maps from a Riemann surface to the unitary group $\U n$ by starting with a constant map and successively modifying it by a process called `adding a uniton', a sort of B\"acklund transform.  She showed that \emph{all} harmonic maps from the \emph{$2$-sphere} $S^2$ could be obtained that way.  
Harmonic maps from the $2$-sphere are particularly important for three reasons:
 (i) they are equivalent to  
harmonic maps of finite energy from the plane
\cite{sacks-uhlenbeck}; (ii) they constitute the nonlinear $\sigma$-model of particle physics, see, e.g., \cite{zak}; (iii) they are precisely the minimal branched immersions of $S^2$ in the sense of
\cite{gulliver-osserman-royden}.
More generally, Uhlenbeck's method gives all harmonic maps from other Riemann surfaces which are of
\emph{finite uniton number}; these maps are also minimal branched immersions.
Various ways of making this more explicit were given by the second author and others, e.g., \cite{wood-unitary}; however, finding the unitons
involved the solution of $\bar{\pa}$-problems, which could rarely be solved explicitly.

In \cite{ferreira-simoes-wood}, M.~J.~Ferreira,
B.~A.~Sim\~oes and the second author showed how to solve this problem, producing algebraic formulae for the unitons, and thus for all harmonic maps of finite uniton number from a surface to the unitary group.  They used the factorization essentially due to G.~Segal \cite{segal} which is dual to that used by Uhlenbeck.  They then related their formulae to the Grassmannian model of Segal.
By a completely different method in which the unitons are thought of as stationary Ward solitons, B.\ Dai and C.-L.\ Terng \cite{dai-terng} obtained explicit formulae for the unitons of the \emph{Uhlenbeck} factorization.  

In the present paper, we use filtrations of the Grassmannian model to produce explicit algebraic formulae for harmonic maps of finite uniton number for a general class of factorizations, including not only those above, but also factorizations obtained by a mixture of them and the factorization by $A_z$-images studied by the second author \cite{wood-unitary}.
On the way, we establish many useful formulae relating uniton factorizations and filtrations.

Finally, we show how to apply our methods to finding harmonic maps of finite uniton number from a surface to the
\emph{special orthogonal group $\SO n$ and the real Grassmannians}; we also find \emph{harmonic maps to the space $\SO {2m} \big/ \U m$ of orthogonal complex structures}.  Here we use a factorization by alternate Uhlenbeck and Segal steps; our formulae for such mixed factorizations then give explicit formulae for all such harmonic maps, see  Theorems \ref{th:fact-SOn} 
and \ref{th:fact-r-odd}, and Corollary \ref{cor:fact-real-Grass}.

The same methods apply to find all harmonic maps of finite uniton number from a surface to the
\emph{symplectic group $\Sp m$ and  quaternionic Grassmannians}; here the factorization is that studied by Q.~He and Y.~Shen  \cite{he-shen2}, Y.~Dong \cite{dong} and R.~Pacheco \cite{pacheco-sympl}. 

We can also find \emph{harmonic maps to the space
$\Sp m \big/ \U m$ of `quaternionic' complex structures
on $\cn^{2m}$ (equivalently, Lagrangian subspaces of\/ 
$\cn^{2m}$)}.  In this way, we obtain explicit formulae for all harmonic maps of finite uniton number from surfaces to the classical compact Lie groups and their inner symmetric spaces. 
Note that our methods could be extended to find pluriharmonic maps from K\"ahler manifolds to these spaces using ideas of \cite{ohnita-valli}.

The paper is arranged as follows.  In \S\ref{sec:alg}, we give formulae relating factorizations and filtrations which are purely algebraic; in particular, we study the two extreme filtrations of Segal and Uhlenbeck.

Then in \S \ref{sec:extended}, we discuss extended solutions of harmonic maps, their factorizations and filtrations, and study how operators in the Grassmannian model correspond to operators on the corresponding subbundles. A special case is that of extended solutions which are invariant under the `additional $S^1$-action' of C.-L. Terng discussed
in \cite[\S 7]{uhlenbeck};
as in \cite{burstall-guest}, this extends to an action of $\cn \setminus \{0\}$ which can be used to deform any harmonic map to an $S^1$-invariant one.

Our explicit formulae for harmonic maps are given in \S \ref{sec:expl}; we show how these give explicit formulae for the algebraic Iwasawa decomposition (Theorem \ref{th:Iwasawa}). In \S \ref{sec:Grass}, we see how our methods apply to give harmonic maps to complex Grassmannians.

Finally, in \S \ref{sec:real}, we apply our methods to give harmonic maps to the groups $\SO n$, $\Sp m$ and their inner symmetric spaces: the real and quaternionic Grassmannians and the spaces
$\SO {2m} \big/ \U m$ and $\Sp m \big/ \U m$.   We thus see how
to obtain explicit formulae for
all harmonic maps of finite uniton number from a surface, in particular, all harmonic maps from the $2$-sphere, into the classical Lie groups and their inner symmetric spaces.

\section{Some basic algebraic formulae}\label{sec:alg}

\subsection{The Grassmannian model of $\Omega \U n$}

For a Lie group $G$, we recall that the group of (free) loops on $G$ is given by 
$$
\Lambda G=\{\gamma:S^1\to G\ :\ \gamma\text{ is smooth}\},
$$ 
and the group of (based) loops on $G$ is given by 
$$
\Omega G=\{\gamma\in\Lambda G\ :\ \gamma(1)=e\}.
$$
where $e$ denotes the identity element of $G$.
We shall mainly consider the case when $G$ is the unitary group $\U n$, where $n$ is a fixed positive integer, or one if its subgroups: the orthogonal group or symplectic group.

We denote by $\H = \H^{(n)}$ the Hilbert space $L^2(S^1,\cn^n)$. By expanding into Fourier series, we have
$$
\H = \text{ linear closure of }
	\{\lambda^i e_j\ :\ i\in\zn,\ j=1,\dots,n\}
$$
where $\{e_1,\dots,e_n\}$ is the standard basis for $\cn^n$; in fact, $\{\lambda^i e_j\ :\ i\in\zn,\ j=1,\dots,n\}$ is a complete orthonormal system for $\H$. 

The natural action of $\U n$ on $\cn^n$ induces an action of $\Omega\U n$ on $\H$ which is isometric with respect to the $L^2$ inner product.  We consider the closed subspace 
$$
\H_+= \text{ linear closure of } \{\lambda^i e_j\ :\ i \in \nn,\ j=1,\dots,n\}
$$ 
where $\nn = \{0,1,2,\ldots\}$.
The action of $\Omega\U n$ on $\H$ induces an action of $\Omega\U n$ on subspaces of $\H$; denote by $\Gr = \Gr^{(n)}$ the orbit of $\H_+$ under that action. For a precise description of the elements of $\Gr$ we refer to \cite{pressley-segal}; here we just note that any $W\in\Gr$ is closed under multiplication by $\lambda$,\ i.e.,\ $\lambda W \subset W$, and we have a bijective map 
\begin{equation} \label{Grass-model}
\Omega\U n\ni\Phi \mapsto W = \Phi\H_+\in\Gr;
\end{equation}
we call $W$ the \emph{Grassmannian model of\/
$\Phi$}, we shall frequently use this identification.  The map \eqref{Grass-model} restricts to a bijection from the
\emph{algebraic loop group} $\Omega_{\alg}\U n$ consisting of
those $\gamma \in \Omega\U n$ given by finite Laurent series:
$\gamma = \sum_{i=s}^r \lambda^k T_k$ where $r \geq s$ are integers and the $T_k$ are $n \times n$ complex matrices, to the
set of $\lambda$-closed subspaces $W$ of $\H$
satisfying 
$$
\lambda^r\H_+ \subset W\subset\lambda^{s}\H_+
$$
for some integers $r \geq s$.
Note that such a subspace can also be thought of as a subspace of the quotient vector space
$\lambda^{s} \H_+/\lambda^r\H_+ $; this quotient space with the inner product induced from $\H$ may be naturally identified with the finite-dimensional vector space $\cn^{(r-s)n} $ equipped with its standard
Hermitian inner product.

{}From now on, let $r \in \nn$, and let $\Omega_r\U n$ denote the set  of polynomials $\Phi = \sum_{k=0}^r \lambda^k T_k$ of degree at most $r$.  Then \eqref{Grass-model} further restricts to a bijection
from $\Omega_r\U n$ to the subset $\Gr_r$ of those $W \in \Gr$ satisfying
$$
\lambda^r\H_+ \subset W \subset \H_+\,.
$$

For any $i \in \zn$, let $P_i:\H \to \cn^n$ denote the $i$'th
natural projection given by
$L = \sum \lambda^i L_i \mapsto L_i$.
  For any subspace $\alpha$ of $\cn^n$, we denote by $\pi_{\alpha} $ and $\pi_{\alpha}^{\perp}$ orthogonal projection onto $\alpha$ and its orthogonal complement $\alpha^{\perp}$, respectively.
The fundamental idea behind relating uniton factorizations and
filtrations is the following construction due to Segal \cite{segal}, though the terminology is ours.

Let $\Phi$, $\wt{\Phi} \in \Omega\U n$ and set
$W = \Phi\H_+$\,, $\wt{W} = \wt{\Phi}\H_+  \in \Gr$.
We say that $\wt{W}$ (or $\wt{\Phi}$) is obtained from
$W$ (or $\Phi$) by a \emph{$\lambda$-step} if
\begin{equation} \label{lambda-step}
\lambda \wt{W} \subset W \subset \wt{W},
\quad \text{equivalently,} \quad
W \subset \wt{W} \subset \lambda^{-1}W.
\end{equation}

\begin{lemma} \label{le:step}  Let $W = \Phi\H_+$ and
$\wt{W} = \wt{\Phi}\H_+$ where $\Phi, \wt{\Phi} \in \Omega\U n$. 
Then $\wt{W}$ is obtained by a $\lambda$-step from $W$
if and only if
$$
\Phi = \wt{\Phi}(\pi_{\alpha} + \lambda\pi_{\alpha}^{\perp})
\text{ equivalently }
\wt{\Phi} = \Phi(\pi_{\alpha} + \lambda^{-1}\pi_{\alpha}^{\perp})\,, \quad
\text{for some subspace $\alpha$}.
$$
Further,
\begin{equation} \label{alpha-from-W}
\alpha = P_0\wt{\Phi}^{-1}W \quad \text{and} \quad
\alpha^{\perp} = P_0\Phi^{-1}\lambda\wt{W}\,;
\end{equation}
conversely,
\begin{equation}\label{W-from-alpha}
W = \wt{\Phi}(\alpha) + \lambda\wt{W}
= \Phi(\alpha) + \lambda\wt{W}
\quad \text{and} \quad
\wt{W} = \wt{\Phi}(\alpha^{\perp}) + W
= \lambda^{-1} \Phi(\alpha^{\perp}) + W.
\end{equation}
\end{lemma}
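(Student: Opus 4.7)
\medskip

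\noindent\textbf{Proof proposal.}
The plan is to exploit the bijectivity of the Grassmannian model \eqref{Grass-model} to turn the loop-group identity into a statement purely about subspaces of $\H$, then read off the remaining formulae by a direct computation. Observe first that the loop $\lambda\mapsto \pi_\alpha+\lambda\pi_\alpha^\perp$ lies in $\Omega\U n$: it equals $I$ at $\lambda=1$, and on $|\lambda|=1$ its inverse is $\pi_\alpha+\lambda^{-1}\pi_\alpha^\perp=\pi_\alpha+\bar\lambda\pi_\alpha^\perp$, which is its hermitian adjoint. Writing $\H_+=\cn^n+\lambda\H_+$ and using that $\lambda\alpha^\perp\subset\lambda\H_+$, one also sees
\[
(\pi_\alpha+\lambda\pi_\alpha^\perp)\H_+=\alpha+\lambda\H_+ .
\]

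For the backward implication, assume $\Phi=\wt\Phi(\pi_\alpha+\lambda\pi_\alpha^\perp)$ for some $\alpha\subset\cn^n$. Applying $\wt\Phi$ to the displayed identity gives
$W=\wt\Phi(\alpha)+\lambda\wt W$, which sits between $\lambda\wt W$ and $\wt W$, so \eqref{lambda-step} holds; simultaneously this produces the first equality in \eqref{W-from-alpha}. For the forward implication, set $U=\wt\Phi^{-1}W$. Since $\wt\Phi$ intertwines multiplication by $\lambda$, \eqref{lambda-step} becomes $\lambda\H_+\subset U\subset\H_+$. Let $\alpha:=P_0U\subset\cn^n$. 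Any $u\in U$ decomposes as $u=P_0u+(u-P_0u)$ with the second term in $\lambda\H_+\subset U$, so $P_0u\in U\cap\cn^n$ and in fact $U=\alpha+\lambda\H_+=(\pi_\alpha+\lambda\pi_\alpha^\perp)\H_+$. Hence $\wt\Phi(\pi_\alpha+\lambda\pi_\alpha^\perp)\H_+=W=\Phi\H_+$, and the bijectivity of \eqref{Grass-model} forces the loop-group equality $\Phi=\wt\Phi(\pi_\alpha+\lambda\pi_\alpha^\perp)$. This also gives the first half of \eqref{alpha-from-W}. The equivalent formula $\wt\Phi=\Phi(\pi_\alpha+\lambda^{-1}\pi_\alpha^\perp)$ is obtained by inverting, using $(\pi_\alpha+\lambda\pi_\alpha^\perp)^{-1}=\pi_\alpha+\lambda^{-1}\pi_\alpha^\perp$.

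The remaining identities are then straightforward computations. For the second identity in \eqref{alpha-from-W}, $\Phi^{-1}\wt\Phi=\pi_\alpha+\lambda^{-1}\pi_\alpha^\perp$ gives $\Phi^{-1}\lambda\wt W=(\lambda\pi_\alpha+\pi_\alpha^\perp)\H_+=\alpha^\perp+\lambda\H_+$, whose $P_0$-image is $\alpha^\perp$. For \eqref{W-from-alpha}, restricting $\wt\Phi^{-1}\Phi=\pi_\alpha+\lambda\pi_\alpha^\perp$ to $\alpha$ shows $\wt\Phi(\alpha)=\Phi(\alpha)$, yielding the second expression for $W$; similarly $\wt\Phi(\alpha^\perp)=\lambda^{-1}\Phi(\alpha^\perp)$, and combining $\wt\Phi\H_+=\wt\Phi(\cn^n)+\lambda\wt W=\wt\Phi(\alpha)+\wt\Phi(\alpha^\perp)+\lambda\wt W$ with $W=\wt\Phi(\alpha)+\lambda\wt W$ gives both expressions for $\wt W$.

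The only conceptual point that needs care is ensuring that the candidate factor $\pi_\alpha+\lambda\pi_\alpha^\perp$ genuinely lies in $\Omega\U n$ so that the bijectivity of \eqref{Grass-model} can be invoked to promote the subspace equality $\wt\Phi(\pi_\alpha+\lambda\pi_\alpha^\perp)\H_+=\Phi\H_+$ to the loop-group equality; everything else is bookkeeping with the decomposition $\H_+=\cn^n\oplus\lambda\H_+$.
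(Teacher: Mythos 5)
Your proof is correct and follows essentially the same route as the paper's: reduce to the subspace $U=\wt\Phi^{-1}W$ squeezed between $\lambda\H_+$ and $\H_+$, identify $U=\alpha+\lambda\H_+$ with $\alpha=P_0U$, and invoke the bijectivity of the Grassmannian model. The paper dismisses the converse and the formulae \eqref{alpha-from-W}, \eqref{W-from-alpha} as immediate; you have simply written out those verifications explicitly.
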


Here, $P_0\Phi^{-1}\lambda\wt{W}$ means
$P_0 \bigl(\Phi^{-1}(\lambda\wt{W})\bigr)$.

\begin{proof}
Suppose that $\wt W$ is obtained from $W$ by a $\lambda$-step, so that $\lambda\wt\Phi\H_+\subset \Phi\H_+\subset\wt\Phi\H_+$\,. Then $\lambda\H_+\subset\wt\Phi^{-1}\Phi\H_+\subset\H_+$\,, which implies that 
$\wt\Phi^{-1}\Phi\H_+=\alpha+\lambda\H_+=(\pi_{\alpha}+\lambda\pi_{\alpha}^{\perp})\H_+$
for some subspace $\alpha\subset\cn^n$; hence $\Phi=\wt\Phi(\pi_{\alpha}+\lambda\pi_{\alpha}^{\perp})$. The converse is immediate, as are \eqref{alpha-from-W} and \eqref{W-from-alpha}.
\end{proof}

Thus a $\lambda$-step $W \mapsto \wt{W}$ is equivalent to a choice of subspace $\alpha$ of $\cn^n$.
Note that we do not exclude the extreme cases: (i) $\alpha = \cn^n$, then $\wt{\Phi} = \Phi$ and $\wt{W} = W$; (ii) $\alpha =$ the zero subspace, then $\wt{\Phi}= \lambda^{-1}\Phi$ and $\wt{W} = \lambda^{-1}W$.

For the rest of this section, let $W = \Phi\H_+\in\Gr_r$ where $\Phi \in \Omega_r\U n$. 
Note that, if $r=0$, then $\Phi = I$, the identity matrix, so 
that $W = \H_+$\,.  

\begin{definition} By a \emph{$\lambda$-filtration} $(W_i)$ of $W$ we mean a nested sequence
\begin{equation*}
W= W_r\subset W_{r-1}\subset\dots\subset W_0=\H_+
\end{equation*}
of $\lambda$-closed subspaces of $\H_+$ with each $W_{i-1}$ is obtained from $W_i$ by a $\lambda$-step, i.e.,
\begin{equation} \label{filt-conds}
\lambda W_{i-1}\subset W_i \subset W_{i-1} \qquad (i=1,\ldots,r).
\end{equation}
\end{definition}

By a simple induction starting with $W_0=\H_+$ we see that each $W_i \in \Gr_i$\,.   We now identify the subspaces and loops associated to a $\lambda$-filtration.

\begin{proposition}\label{pr:factorizations}
Let $(W_i)$ be a $\lambda$-filtration of\/ $W$. Define a sequence $\Phi_i \in \Omega\U n$ inductively by
$\Phi_0=I$ and $\Phi_i=\Phi_{i-1}(\pi_{\alpha_i}+\lambda\pi_{\alpha_i}^{\perp})$
\ $(i=1,\ldots, r)$
where
\begin{equation} \label{alpha_i}
\alpha_i=P_0\Phi_{i-1}^{-1}W_i\,.
\end{equation}
Then 
$
\Phi_i \H_+=W_i\,.
$ 
\end{proposition}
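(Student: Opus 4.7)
The plan is a straightforward induction on $i$. The base case $i=0$ is immediate, as $\Phi_0 = I$ and $W_0 = \H_+$.

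For the inductive step, assume $\Phi_{i-1}\H_+ = W_{i-1}$. A simple induction using the filtration inclusions \eqref{filt-conds} (starting from $W_0 = \H_+$) shows that $\lambda^i\H_+ \subset W_i \subset \H_+$, so $W_i \in \Gr_i \subset \Gr$; hence, by the bijection \eqref{Grass-model}, there is a unique $\Psi \in \Omega\U n$ with $W_i = \Psi\H_+$. The $\lambda$-filtration condition $\lambda W_{i-1} \subset W_i \subset W_{i-1}$ is exactly the statement that $W_{i-1}$ is obtained from $W_i$ by a $\lambda$-step in the sense of \eqref{lambda-step}. I would then apply the ``only if'' direction of Lemma~\ref{le:step} with the roles of $(W, \wt W, \Phi, \wt\Phi)$ played by $(W_i, W_{i-1}, \Psi, \Phi_{i-1})$. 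The lemma produces a subspace $\alpha \subset \cn^n$ with
$$
\Psi = \Phi_{i-1}(\pi_\alpha + \lambda\pi_\alpha^\perp), \qquad \alpha = P_0\Phi_{i-1}^{-1}W_i.
$$
Comparing with the definition \eqref{alpha_i} gives $\alpha = \alpha_i$, and then the recursion defining $\Phi_i$ yields $\Psi = \Phi_i$; thus $\Phi_i\H_+ = W_i$, closing the induction.

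The only subtlety is bookkeeping: one must identify correctly which member of the pair $(W_i, W_{i-1})$ plays the role of $W$ and which plays that of $\wt W$ in Lemma~\ref{le:step} (the bigger subspace $W_{i-1}$ is the ``$\wt W$''), so that the factor $\pi_\alpha + \lambda\pi_\alpha^\perp$ appears on the right of $\Phi_{i-1}$ rather than the left. Once this matching is performed, the proposition is essentially a direct translation of the lemma, with no further computations required.
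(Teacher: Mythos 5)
Your proof is correct and follows essentially the same route as the paper's: the paper also argues by induction on $i$, applying $\Phi_{i-1}^{-1}$ to the inclusions $\lambda W_{i-1}\subset W_i\subset W_{i-1}$ to get $\Phi_{i-1}^{-1}W_i=\alpha_i+\lambda\H_+=(\pi_{\alpha_i}+\lambda\pi_{\alpha_i}^{\perp})\H_+$. The only cosmetic difference is that you delegate that step to Lemma~\ref{le:step} (with the roles correctly matched, $W_{i-1}$ being the $\wt W$), whereas the paper re-derives it inline.
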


\begin{proof} We use induction on $i$.  For $i=0$, it is trivial.  For $i=1$, \eqref{filt-conds} implies that
$W_1=V+\lambda
\H_+$ for some subspace $V\subset\cn^n$ and \eqref{alpha_i} gives
$\alpha_1=P_0W_1=V$.
Hence 
$$
W_1= \pi_{\alpha_1}\H_+ + \lambda\H_+ = (\pi_{\alpha_1}+\lambda\pi_{\alpha_1}^{\perp})\H_+ = \Phi_1 \H_+\,,
$$ 
as desired.

Now suppose that $\Phi_{i-1}\H_+=W_{i-1}$ for some $i > 1$.
Then, from \eqref{filt-conds} and the induction hypothesis,
$
\lambda\H_+\subset\Phi_{i-1}^{-1}W_{i}\subset \H_+\,,
$
so that, by \eqref{alpha_i},
$$
\Phi_{i-1}^{-1}W_{i}=\alpha_{i}+\lambda
\H_+=(\pi_{i}+\lambda\pi_{i}^{\perp})\H_+\,.
$$
Hence
$\Phi_{i}\H_+ = W_{i}$\,,
completing the induction step.
\end{proof}

The proposition implies that
\begin{equation} \label{Phi_i}
\Phi_0 = I, \quad \text{and} \quad \Phi_i= 
(\pi_{\alpha_1}+\lambda\pi_{\alpha_1}^{\perp})\cdots(\pi_{\alpha_i}+\lambda\pi_{\alpha_i}^{\perp}) \quad (i=1,2,\ldots, r)\,;
\end{equation}
thus, the $\Phi_i$ are polynomials in $\lambda$ of the form 
\begin{equation}\label{Phi-FS}
\Phi_i=T_0^i+\lambda T_1^i+\dots+\lambda^i T_i^i \qquad (\lambda \in S^1)
\end{equation}
where the $T_j^i$ are $n \times n$ complex matrices. 

The proposition shows that the choice of a $\lambda$-filtration $(W_i)$ of $W$ is equivalent to the choice of a sequence
$(\alpha_i)$ of subspaces of $\cn^n$; this is equivalent, in turn, to a \emph{factorization of\/ $\Phi$}:
\begin{equation} \label{Phi-fact}
\Phi = (\pi_{\alpha_1}+\lambda\pi_{\alpha_1}^{\perp})\cdots(\pi_{\alpha_r}+\lambda\pi_{\alpha_r}^{\perp}).
\end{equation}
Indeed, given $(W_i)$, define the sequence
$(\alpha_i)$ by
\eqref{alpha_i}; conversely, given an arbitrary sequence
$(\alpha_i)$ of subspaces,
define the sequence $(\Phi_i)$ by \eqref{Phi_i} and then set $W_i = \Phi_i\H_+$\,.
{}From Lemma \ref{le:step} we obtain the following formulae.

\begin{corollary}\label{co:factorizations}
Let $(W_i)$ be a $\lambda$-filtration.  Then
for $i = 1,\ldots, r$, we have 
\begin{itemize}
\item[(i)] $\alpha_i^{\perp}=P_0\Phi_i^{-1}(\lambda W_{i-1})\,;$
\item[(ii)] $W_i=\Phi_{i-1}(\alpha_i)\oplus\lambda W_{i-1}=\Phi_i(\alpha_i)\oplus\lambda W_{i-1}\,;$
\item[(iii)] $W_i=\Phi_i(\alpha_{i+1}^{\perp})\oplus W_{i+1}=\lambda^{-1}\Phi_{i+1}(\alpha_{i+1}^{\perp})\oplus W_{i+1}\,;$
\end{itemize}
Furthermore,  all the direct sums are orthogonal direct sums with respect to  the $L^2$ inner product on $\H_+$\,. 
\qed \end{corollary}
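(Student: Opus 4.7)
The plan is to read off each identity from Lemma~\ref{le:step} by specializing the generic $\lambda$-step to the two consecutive steps $W_{i-1} \supset W_i$ and $W_i \supset W_{i+1}$, and then to verify orthogonality using the isometric action of $\Omega\U n$ on $\H$.

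First I would apply Lemma~\ref{le:step} with $(\wt W, W, \wt\Phi, \Phi) = (W_{i-1}, W_i, \Phi_{i-1}, \Phi_i)$; by \eqref{alpha_i} the subspace called $\alpha$ in the lemma is exactly $\alpha_i$. The lemma's identity $\alpha^\perp = P_0\Phi^{-1}\lambda\wt W$ is then (i), and its identity $W = \wt\Phi(\alpha) + \lambda\wt W = \Phi(\alpha) + \lambda\wt W$ is (ii). Next I would apply the lemma with $(\wt W, W, \wt\Phi, \Phi) = (W_i, W_{i+1}, \Phi_i, \Phi_{i+1})$; the relevant subspace is now $\alpha_{i+1}$, and its identity $\wt W = \wt\Phi(\alpha^\perp) + W = \lambda^{-1}\Phi(\alpha^\perp) + W$ gives (iii).

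For orthogonality, recall that multiplication by any element of $\Omega\U n$ is a unitary isometry of $\H$, as is multiplication by $\lambda^{\pm 1}$. In (ii), applying the isometry $\Phi_{i-1}^{-1}$ reduces the claim $\Phi_{i-1}(\alpha_i) \perp \lambda W_{i-1}$ to $\alpha_i \perp \lambda\H_+$, which is immediate since $\alpha_i \subset \cn^n$ lies in the $\lambda^0$-component of $\H$ while $\lambda\H_+$ contains no $\lambda^0$-component; the equality $\Phi_i(\alpha_i) = \Phi_{i-1}(\alpha_i)$, which follows from the factorization $\Phi_i = \Phi_{i-1}(\pi_{\alpha_i} + \lambda\pi_{\alpha_i}^\perp)$ evaluated on $v\in\alpha_i$, handles the second form of (ii). In (iii), applying $\Phi_i^{-1}$ turns $\Phi_i(\alpha_{i+1}^\perp) \perp W_{i+1}$ into $\alpha_{i+1}^\perp \perp \Phi_i^{-1}W_{i+1}$; the $\lambda$-step property combined with \eqref{alpha_i} gives $\Phi_i^{-1}W_{i+1} = \alpha_{i+1} + \lambda\H_+$, so orthogonality reduces to $\alpha_{i+1}^\perp \perp \alpha_{i+1}$ inside $\cn^n$ together with $\alpha_{i+1}^\perp \perp \lambda\H_+$. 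A direct computation then shows that $\lambda^{-1}\Phi_{i+1}(v) = \Phi_i(v)$ for $v \in \alpha_{i+1}^\perp$, so the two decompositions in (iii) describe the same subspace and the second inherits its orthogonality from the first.

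I do not expect a real obstacle here: the substance is already captured by Lemma~\ref{le:step}, and the only care required is bookkeeping of which consecutive pair $(\Phi_{i-1}, \Phi_i)$ or $(\Phi_i, \Phi_{i+1})$ is being plugged in as $(\wt\Phi, \Phi)$, together with the observation that every ambient identification in $\H$ is performed by an isometry, so each sum $W + W'$ produced by Lemma~\ref{le:step} is automatically an orthogonal direct sum.
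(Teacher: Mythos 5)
Your proposal is correct and matches the paper's (unwritten) argument exactly: the paper simply asserts that the corollary follows from Lemma~\ref{le:step}, and your specialization of that lemma to the consecutive pairs $(W_i, W_{i-1})$ and $(W_{i+1}, W_i)$, using \eqref{alpha_i} to identify the subspace $\alpha$ with $\alpha_i$, is precisely the intended deduction. Your explicit verification of orthogonality via the isometric action of $\Omega\U n$ and of multiplication by $\lambda$ supplies a detail the paper leaves implicit, and it is carried out correctly.
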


{}From \eqref{Phi-FS} we obtain
\begin{equation*} 
\Phi_i^{\;-1} = {\Phi_i}^* = S_0^i + \lambda^{-1} S_1^i +\dots+\lambda^{-i}S_i^i \qquad (\lambda \in S^1),
\end{equation*}
where each $S^i_s$ is the \emph{adjoint} $(T_s^i)^*$ of $T_s^i$\,.
On the other hand, from \eqref{Phi_i} we obtain
\begin{equation*}
\Phi_i^{\;-1} = (\pi_{\alpha_i}+ \lambda^{-1} \pi_{\alpha_i}^{\perp}) \cdots
	(\pi_{\alpha_1}+ \lambda^{-1} \pi_{\alpha_1}^{\perp})\,.
\end{equation*}
Comparing these, we see that $S^{i}_{s}$ is the sum of all $i$-fold products
of the form $\Pi_i \cdots \Pi_1$  where exactly $s$ of the $\Pi_j$ are $\pi_{\alpha_j}^{\perp}$ and the other $i-s$ are $\pi_{\alpha_j}$\,.
  
\begin{corollary} \label{co:explicit}
We have the following explicit formulae for each subspace $\alpha_i\!:$
\begin{equation} \label{alpha-and-perp}
\text{\rm (a)} \quad \alpha_i
=\Bigl( \sum_{s=0}^{i-1}S_s^{i-1}P_s \Bigr) W_i\,,
\qquad
\text{\rm (b)} \quad \alpha_i^{\perp}
= \Bigl(\sum_{s=1}^i S_s^i P_{s-1}\Bigr)W_{i-1}\,.
\end{equation}
\end{corollary}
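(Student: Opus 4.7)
The plan is to exploit the two explicit descriptions of $\Phi_i^{\,-1}$ derived just before the corollary, namely the Laurent expansion
$\Phi_i^{\,-1} = \sum_{s=0}^{i}\lambda^{-s}S_s^i$,
combined with the characterizations of $\alpha_i$ and $\alpha_i^{\perp}$ already established, i.e.\ $\alpha_i = P_0 \Phi_{i-1}^{-1}W_i$ from \eqref{alpha_i} and $\alpha_i^{\perp}=P_0\Phi_i^{\,-1}(\lambda W_{i-1})$ from Corollary \ref{co:factorizations}(i). Everything will reduce to reading off the $\lambda^{0}$-Fourier coefficient of the result.

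For part (a), I would take an arbitrary $w\in W_i$ and decompose it as $w=\sum_{s\ge 0}\lambda^s P_s w$, using that $w\in\H_+$. Applying $\Phi_{i-1}^{\,-1}=\sum_{t=0}^{i-1}\lambda^{-t}S_t^{i-1}$ produces a double sum $\sum_{s,t}\lambda^{s-t}S_t^{i-1}P_s w$; only the diagonal $s=t$ survives under $P_0$, giving $P_0\Phi_{i-1}^{\,-1}w = \sum_{s=0}^{i-1}S_s^{i-1}P_s w$. Taking the image over $w\in W_i$ yields formula (a).

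For part (b) the argument is identical after a shift. For $w\in W_{i-1}\subset\H_+$ we have $\lambda w=\sum_{s\ge 1}\lambda^s P_{s-1}w$, and applying $\Phi_i^{\,-1}=\sum_{t=0}^{i}\lambda^{-t}S_t^i$ yields $\sum_{s\ge 1,\,t}\lambda^{s-t}S_t^i P_{s-1}w$. The $\lambda^0$ part forces $s=t$ with $1\le s\le i$, so $P_0\Phi_i^{\,-1}(\lambda w) = \sum_{s=1}^{i}S_s^i P_{s-1}w$, proving (b).

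The proof is essentially bookkeeping, so there is no serious obstacle; one only needs to confirm that the two expansions of $\Phi_i^{\,-1}$ (and $\Phi_{i-1}^{\,-1}$) given in the paragraph preceding the corollary are being used consistently, and that the Fourier extraction $P_0$ correctly selects the diagonal terms. The only mild subtlety is the index shift caused by the factor $\lambda$ in the definition of $\alpha_i^{\perp}$, which is why the sum in (b) starts at $s=1$ rather than $s=0$ and uses $S^i_s$ in place of $S^{i-1}_s$.
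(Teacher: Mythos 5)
Your proof is correct and follows exactly the paper's route: the paper's one-line proof says the formulae are "obtained by expanding \eqref{alpha_i} and using Corollary \ref{co:factorizations}(i)", and your argument is precisely that expansion carried out explicitly, using the Laurent series $\Phi_i^{\,-1}=\sum_s \lambda^{-s}S_s^i$ and extracting the $\lambda^0$ coefficient. The index bookkeeping (including the shift by $\lambda$ in part (b)) is handled correctly.
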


\begin{proof}
The formulae are obtained by expanding \eqref{alpha_i} and using Corollary \ref{co:factorizations}(i).  
\end{proof} 

\subsection{Two extreme filtrations}  \label{subsec:extreme}
There are two natural $\lambda$-steps, which we shall call the \emph{Segal} and \emph{Uhlenbeck steps}, given on a subspace $W \in \Gr_i$ by 
\begin{equation*} 
W^S_{i-1}=W+\lambda^{i-1}\H_+\,, \qquad 
	W^U_{i-1}=(\lambda^{-1}W)\cap\H_+
	= (\lambda^{-1}W)\cap\H_+ +\lambda^{i-1}\H_+\,,
\end{equation*}
respectively; note that the Segal step depends on $i$.
Since
$(\lambda^{-1}W\cap \H_+)+\lambda^{i-2}\H_+
	=\bigl(\lambda^{-1}(W+\lambda^{i-1}\H_+)\bigr)\cap \H_+\,,$ the Segal and Uhlenbeck steps \emph{commute}.  

Starting with a subspace $W\in\Gr_r$
and iterating these steps gives
$\lambda$-filtrations of $W$ which appear in the work of Segal \cite{segal} and Uhlenbeck \cite{uhlenbeck}:
\begin{eqnarray}
W_i^S&=&W+\lambda^i\H_+ \quad (i=0,\dots,r)
\quad(\textit{the Segal filtration});
	\label{Seg-filt}
\\
W_i^U&=&(\lambda^{i-r}W)\cap \H_+ \quad (i=0,\dots,r)
\quad(\textit{the Uhlenbeck filtration}).
	\label{Uhl-filt} 
\end{eqnarray}

We call the corresponding subspaces $\alpha_i$ and factorization
\eqref{Phi-fact} the \emph{Segal} (resp.\ \emph{Uhlenbeck}) \emph{subspaces} and \emph{factorization}.
The following proposition shows that these are the two extremes of the possible filtrations of $W$.
  
\begin{proposition}\label{pr:extreme}
Let $W \in \Gr_r$\,.
For any $\lambda$-filtration $(W_i)$ of\/ $W$, we have
$$
W_i^S\subset W_i\subset W_i^U\qquad(i=0,\dots,r).
$$ 
\end{proposition}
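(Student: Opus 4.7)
The plan is to prove both inclusions directly from the defining conditions \eqref{filt-conds} of a $\lambda$-filtration, by iterating the two inclusions $\lambda W_{i-1} \subset W_i$ and $W_i \subset W_{i-1}$ separately.

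First I would establish the inclusion $W_i^S \subset W_i$. Two ingredients are needed: (a) $W = W_r \subset W_i$, which follows immediately from the chain $W_r \subset W_{r-1} \subset \cdots \subset W_i$; and (b) $\lambda^i \H_+ \subset W_i$. For (b), since $W_0 = \H_+$, a simple induction using $\lambda W_{j-1} \subset W_j$ gives $\lambda^j \H_+ \subset W_j$ for every $j$; this is the same induction already invoked in the excerpt to conclude $W_i \in \Gr_i$. Combining (a) and (b) yields $W_i^S = W + \lambda^i \H_+ \subset W_i$.

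Next I would establish the inclusion $W_i \subset W_i^U$. Since $W_i \subset \H_+$ (because $W_i \in \Gr_i$), it suffices to show $\lambda^{r-i} W_i \subset W$. For this, I would iterate the other condition $\lambda W_{j-1} \subset W_j$ starting from $j = i+1$: namely, $\lambda W_i \subset W_{i+1}$, hence $\lambda^2 W_i \subset \lambda W_{i+1} \subset W_{i+2}$, and continuing down to $\lambda^{r-i} W_i \subset W_r = W$. This gives $W_i \subset \lambda^{i-r} W$, and intersecting with $\H_+$ produces $W_i \subset (\lambda^{i-r} W) \cap \H_+ = W_i^U$.

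The argument requires no serious obstacle — both inclusions reduce to repeated use of one half of the $\lambda$-step condition. The only thing worth being careful about is handling the boundary cases $i = 0$ (where both $W_0^S$ and $W_0^U$ equal $\H_+ = W_0$) and $i = r$ (where $W_r^S = W = W_r^U$ after using $\lambda^r \H_+ \subset W$), so that the chain of iterations bottoms out correctly at $W_0 = \H_+$ and $W_r = W$.
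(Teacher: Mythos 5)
Your proof is correct and takes essentially the same approach as the paper: the inclusion $W_i^S \subset W_i$ follows from $W \subset W_i$ together with $\lambda^i\H_+ \subset W_i$, and the inclusion $W_i \subset W_i^U$ from iterating $\lambda W_{j-1} \subset W_j$ to get $W_i \subset \lambda^{i-r}W$ and intersecting with $\H_+$ (the paper merely packages this iteration as a reversed induction).
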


\begin{proof} Since $\lambda^i\H_+\subset W_i$ and
$W\subset W_i$, we see that
$$
W_i^S=W+\lambda^i\H_+\subset W_i \qquad (i=0,\ldots, r).
$$
To show that $W_i\subset
W_i^U$, we use reversed induction: since $W_r=W_r^U =W$, it is true for
$i=r$. Assume that it is true for some $i$. Then we see that
$$
W_{i-1}\subset \lambda^{-1}W_i\subset
\lambda^{-1}(\lambda^{i-r}W)=\lambda^{i-1-r}W.
$$
Since $W_{i-1}\subset \H_+$ it follows that 
$
W_{i-1}\subset(\lambda^{i-1-r}W)\cap \H_+ =W_{i-1}^U\,,
$
and the induction step is complete. 
\end{proof}

\begin{remark}\label{rem:duality}
For any $i \geq 1$ and $W \in \Gr_i$\,, set
$W^I = \lambda^{i-1}\ov{W}^{\perp}$ where $\ov{W}$ denotes the complex conjugate of $W$.
 If $W=\Phi\H_+$\,, then clearly $W^I=\lambda^i\ov\Phi\H_+$; it follows that $W \mapsto W^I$ is an \emph{involution} on $\Gr_i$.  Furthermore, (i) if $\wt{W}$ is obtained from $W$ by a
$\lambda$-step with subspace $\alpha$, then $\wt{W}^I$ is obtained from $W^I$ by a $\lambda$-step with subspace $\ov{\alpha}^{\perp}$; (ii) if the step $W \mapsto \wt{W}$ is Segal (resp. Uhlenbeck) then the step $W^I \mapsto \wt{W}^I$ is Uhlenbeck (resp. Segal).

This involution induces an involution on $\lambda$-filtrations: given a $\lambda$-filtration $(W_i)$,
setting $W_i^I =  \lambda^{i-1}\ov{W_i}^{\perp}$  defines another $\lambda$-filtration $(W_i^I)$.
See also Example \ref{ex:factorization}.
\end{remark}

We now see what choices of subspace correspond to Segal and Uhlenbeck steps.  

\begin{proposition} \label{prop:Seg-Uhl}
For $i \geq 1$, let $\Phi \in \Omega_i\U n$.  Write
\begin{equation} \label{Phi-T-S}
\Phi = T_0 + T_1 \lambda + \cdots + T_i \lambda^i
\quad \text{so that} \quad 
\Phi^{-1} = S_0 + S_1 \lambda^{-1} + \cdots + S_i \lambda^{-i}
\end{equation}
where $S_j$ is the adjoint of\/ $T_j$ \ $(j=1,\ldots, i)$.
Let $\alpha$ be a subspace of\/ $\cn^n$. 
Write $W = \Phi\H_+$\,,
$\wt{\Phi} = \Phi(\pi_{\alpha} + \lambda^{-1} \pi_{\alpha}^{\perp})$ and $\wt{W} = \wt{\Phi}\H_+$\,.  
Then
\begin{itemize}
\item[(i)] $\wt{W}=W+\lambda^{i-1}\H_+$ \emph{(Segal step)}
if and only if\/ $\alpha=\ker(T_i)\,;$

\item[(ii)]$\wt{W}=(\lambda^{-1}W)\cap \H_+$
\emph{(Uhlenbeck step)} if and only if\/
$\alpha=\Ima(S_0)$.
\end{itemize}
(Note that we do not insist that $T_i$ or $S_0$ be non-zero.) 
\end{proposition}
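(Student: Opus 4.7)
The plan is to invoke the formula $\alpha^\perp = P_0\Phi^{-1}\lambda\wt{W}$ from Lemma \ref{le:step}, which, combined with the uniqueness of $\alpha$ attached to a $\lambda$-step, reduces both (i) and (ii) to direct calculations once we know that the Segal and Uhlenbeck candidates really are $\lambda$-steps from $W$. That preliminary verification is routine: for $\wt{W} = W + \lambda^{i-1}\H_+$ we have $W \subset \wt{W}$ trivially and $\lambda\wt{W} \subset W$ because $\lambda^i\H_+ \subset W$ (as $W \in \Gr_i$); for $\wt{W} = (\lambda^{-1}W) \cap \H_+$, both $W \subset \wt{W}$ and $\lambda\wt{W} \subset W$ are visible from $W \subset \lambda^{-1}W \cap \H_+$ and $\lambda\wt{W} = W \cap \lambda\H_+$.

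For part (i), I would compute $\alpha^\perp$ starting from $\lambda\wt{W} = \lambda W + \lambda^i\H_+$. The piece $\Phi^{-1}\lambda W = \lambda\H_+$ contributes nothing to $P_0$, so $\alpha^\perp = P_0(\Phi^{-1}\lambda^i\H_+)$. Using \eqref{Phi-T-S}, $\Phi^{-1}\lambda^i = S_0\lambda^i + S_1\lambda^{i-1} + \cdots + S_i$, and only its action on constants $v \in \cn^n$ produces a $\lambda^0$ term, namely $S_i v$. Hence $\alpha^\perp = \Ima S_i = \Ima T_i^* = (\ker T_i)^\perp$, yielding $\alpha = \ker T_i$.

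For part (ii), starting from $\lambda\wt{W} = W \cap \lambda\H_+$, I would write a general $h \in \H_+$ as $h = h_0 + \lambda h_1 + \cdots$ and observe that $\Phi h$ lies in $\lambda\H_+$ precisely when $P_0(\Phi h) = T_0 h_0 = 0$, i.e.\ when $h_0 \in \ker T_0$. Therefore $\Phi^{-1}(W \cap \lambda\H_+) = \ker T_0 + \lambda\H_+$, whose $P_0$ component is $\ker T_0$, so $\alpha^\perp = \ker T_0$ and $\alpha = \Ima T_0^* = \Ima S_0$.

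Both converse implications then follow at once from the uniqueness clause of Lemma \ref{le:step}: a $\lambda$-step $\wt{W}$ is determined by $\alpha$, so starting from $\alpha = \ker T_i$ (resp.\ $\alpha = \Ima S_0$) must recover the Segal (resp.\ Uhlenbeck) step. I do not anticipate a genuine obstacle; the only care needed is bookkeeping the interplay between $T_j$ and $S_j = T_j^*$ when passing between images and kernels via orthogonal complements, and being honest that $T_i$ or $S_0$ may vanish (corresponding to the extreme cases $\alpha = \cn^n$ or $\alpha = 0$ noted after Lemma \ref{le:step}).
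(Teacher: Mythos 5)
Your proof is correct. For part (i) it is essentially the paper's own argument: both reduce the question to the identity $\alpha^{\perp} = P_0\Phi^{-1}(\lambda\wt{W})$ from Lemma \ref{le:step} (the paper cites the equivalent Corollary \ref{co:factorizations}(i)) and then compute $P_0\bigl(\lambda^i\Phi^{-1}\H_+\bigr) = \Ima(S_i) = (\ker T_i)^{\perp}$. The only divergence is in part (ii): the paper disposes of the Uhlenbeck case in one line by applying the involution $W \mapsto \lambda^{i-1}\ov{W}^{\perp}$ of Remark \ref{rem:duality}, which interchanges Segal and Uhlenbeck steps and replaces $\alpha$ by $\ov{\alpha}^{\perp}$, whereas you redo the computation directly, identifying $\Phi^{-1}(W\cap\lambda\H_+)$ as $\ker T_0 + \lambda\H_+$ so that $\alpha^{\perp}=\ker T_0$ and $\alpha = \Ima S_0$. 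Both are valid; the duality argument is shorter, while your computation is self-contained and makes the role of $T_0$ explicit. Your appeal to the bijectivity of the correspondence between $\lambda$-steps and subspaces $\alpha$, implicit in Lemma \ref{le:step} via \eqref{alpha-from-W} and \eqref{W-from-alpha}, correctly supplies both directions of each equivalence.
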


\begin{proof}
(i) $\wt{W}=W+\lambda^{i-1}\H_+$ if and only if
$\Phi^{-1}(\lambda\wt{W}) = \lambda\H_++\lambda^i\Phi^{-1}\H_+$\,.
Since $\wt{W} \subset W$ and $\lambda\H_+\subset\Phi^{-1}(\lambda\wt{W})$,
this is equivalent to 
$P_0\Phi^{-1}(\lambda\wt{W}) = P_0\lambda^i\Phi^{-1}\H_+$\,. 
By Corollary \ref{co:factorizations}(i), this holds if and only if 
$$
\alpha^{\perp}=P_0\lambda^i\Phi^{-1}\H_+=\Ima(S_i),
	\quad \text{equivalently,}\quad
		\alpha=\ker(T_i).
$$

(ii) This follows from (i) by applying the involution of Remark \ref{rem:duality}.
\end{proof}

The following result shows how the particular choices of extreme filtrations correspond to `covering' properties of the corresponding subspaces.
 
\begin{proposition}\label{prop:Seg-Uhl-cov}
Let $W \in \Gr_r$.
Let $(W_i)$ be a $\lambda$-filtration of\/ $W$
and let 
$\alpha_i$ be the corresponding subspaces given by
Proposition \ref{pr:factorizations}.

{\rm (i)} Suppose that, for some $i=1,\dots,r-1$, we have
$W_{i-1}=W_i+\lambda^{i-1}\H_+$\,.  Then $W_i=W_{i+1}+\lambda^i\H_+$
  if and only if
\begin{equation} \label{Seg-cov}
\pi_{\alpha_i}(\alpha_{i+1})=\alpha_i\,.
\end{equation}
In particular, $(W_i)$ is the Segal filtration of\/ $W$ if and only if
\eqref{Seg-cov} holds for all $i = 1,\ldots, r-1$.

{\rm (ii)} Suppose that, for some $i=1,\dots,r-1$, we have
$W_{i-1}=(\lambda^{-1}W_i)\cap \H_+$\,.  Then $W_i=(\lambda^{-1}W_{i+1})\cap \H_+$ if and only if 
\begin{equation} \label{Uhl-cov}
\pi_{\alpha_{i+1}}(\alpha_i)=\alpha_{i+1}\,. 
\end{equation}
In particular, $(W_i)$ is the Uhlenbeck filtration of\/ $W$ if and only if
\eqref{Uhl-cov} holds for all $i = 1,\ldots, r-1$.
\end{proposition}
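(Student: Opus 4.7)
The plan is to reduce both parts to Proposition \ref{prop:Seg-Uhl} by tracking how the top Fourier coefficient $T_{i+1}^{i+1}$ of $\Phi_{i+1}$ and the constant Fourier coefficient $S_0^{i+1}$ of $\Phi_{i+1}^{\,-1}$ evolve under right multiplication of $\Phi_i$ (respectively left multiplication of $\Phi_i^{\,-1}$) by the $\lambda$-step factor corresponding to $\alpha_{i+1}$. Once these recursions are in hand, the proposition will translate the Segal (resp.\ Uhlenbeck) hypothesis and conclusion into equations $\alpha_j = \ker T_j^j$ (resp.\ $\alpha_j = \Ima S_0^j$), and the covering conditions \eqref{Seg-cov} and \eqref{Uhl-cov} will fall out by elementary linear algebra.

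For part (i), I would first expand $\Phi_{i+1} = \Phi_i(\pi_{\alpha_{i+1}} + \lambda\pi_{\alpha_{i+1}}^{\perp})$ and compare with \eqref{Phi-FS} to obtain
$$
T_{i+1}^{i+1} = T_i^i\,\pi_{\alpha_{i+1}}^{\perp}.
$$
By Proposition \ref{prop:Seg-Uhl}(i), the Segal hypothesis at level $i$ says $\alpha_i = \ker T_i^i$, and the desired conclusion says $\alpha_{i+1} = \ker T_{i+1}^{i+1}$. The key step, which I would verify by decomposing any vector orthogonally along $\alpha_{i+1}$, is the identification
$$
\ker T_{i+1}^{i+1} = \alpha_{i+1} \oplus (\alpha_i \cap \alpha_{i+1}^{\perp}),
$$
from which the conclusion holds iff $\alpha_i \cap \alpha_{i+1}^{\perp} = \{0\}$. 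A short orthogonal-complement argument then identifies the latter with $\pi_{\alpha_i}(\alpha_{i+1}) = \alpha_i$, giving \eqref{Seg-cov}.

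Part (ii) should come out more cleanly: the analogous expansion of $\Phi_{i+1}^{\,-1} = (\pi_{\alpha_{i+1}} + \lambda^{-1}\pi_{\alpha_{i+1}}^{\perp})\Phi_i^{\,-1}$ gives $S_0^{i+1} = \pi_{\alpha_{i+1}} S_0^i$, so Proposition \ref{prop:Seg-Uhl}(ii) immediately rewrites the conclusion as $\alpha_{i+1} = \Ima S_0^{i+1} = \pi_{\alpha_{i+1}}(\Ima S_0^i) = \pi_{\alpha_{i+1}}(\alpha_i)$, which is \eqref{Uhl-cov}. Alternatively, (ii) can be read off from (i) via the involution of Remark \ref{rem:duality}, which swaps Segal and Uhlenbeck steps and replaces $\alpha$ by $\ov\alpha^{\perp}$.

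For the ``in particular'' clauses, the base case is automatic on both sides, since $W_0 = \H_+$ and $W_1 \subset \H_+$ force both $W_0 = W_1 + \H_+$ and $W_0 = (\lambda^{-1}W_1)\cap\H_+$. A straightforward induction on $i$ using the first halves of (i) and (ii) then yields the characterization of the Segal and Uhlenbeck filtrations by the respective covering conditions. The only genuine technical obstacle is the kernel identification in part (i) together with its translation into the projection condition; part (ii) and the inductions amount to bookkeeping against Proposition \ref{prop:Seg-Uhl}.
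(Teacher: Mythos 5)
Your proof is correct, but it takes a genuinely different route from the paper's. The paper works directly at the level of subspaces: using Corollary \ref{co:factorizations}(ii) and the hypothesis $W_{i-1}=W_i+\lambda^{i-1}\H_+$ it derives the identity $W_{i+1}+\lambda^i\H_+=\Phi_{i-1}\bigl(\pi_{\alpha_i}(\alpha_{i+1})+\lambda\H_+\bigr)$, then compares with $W_i=\Phi_{i-1}(\alpha_i+\lambda\H_+)$ via the formula $P_0\Phi_{i-1}^{-1}W_i=\alpha_i$ from Proposition \ref{pr:factorizations}; part (ii) is declared similar. You instead route everything through Proposition \ref{prop:Seg-Uhl}, using the coefficient recursions $T_{i+1}^{i+1}=T_i^i\,\pi_{\alpha_{i+1}}^{\perp}$ and $S_0^{i+1}=\pi_{\alpha_{i+1}}S_0^i$, both of which are correct. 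Your remaining steps check out: the decomposition $\ker T_{i+1}^{i+1}=\alpha_{i+1}\oplus(\alpha_i\cap\alpha_{i+1}^{\perp})$ follows at once from $\alpha_i=\ker T_i^i$ by splitting a vector along $\alpha_{i+1}$, and $\alpha_i\cap\alpha_{i+1}^{\perp}=\{0\}$ is indeed equivalent to $\pi_{\alpha_i}(\alpha_{i+1})=\alpha_i$ because $\alpha_i\ominus\pi_{\alpha_i}(\alpha_{i+1})=\alpha_i\cap\alpha_{i+1}^{\perp}$. The base cases of your inductions are also right: since $W_0=\H_+$ and $\lambda\H_+\subset W_1\subset\H_+$, both $W_0=W_1+\lambda^0\H_+$ and $W_0=(\lambda^{-1}W_1)\cap\H_+$ hold automatically, so the covering conditions for $i=1,\dots,r-1$ do control all $r$ steps. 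What your approach buys is that part (ii) becomes a genuine one-liner, $\alpha_{i+1}=\Ima(\pi_{\alpha_{i+1}}S_0^i)=\pi_{\alpha_{i+1}}(\alpha_i)$, and the involution of Remark \ref{rem:duality} gives a clean second derivation of (ii) from (i); what the paper's approach buys is uniformity, since the same subspace computation handles both parts without the slightly fiddlier kernel decomposition that your part (i) requires.
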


\begin{proof} (i) By Corollary \ref{co:factorizations}(ii), we have 
\begin{eqnarray}
W_{i+1}+\lambda^i\H_+
	&=&\Phi_i(\alpha_{i+1})+\lambda W_i+\lambda^i\H_+
=\Phi_i(\alpha_{i+1})+\lambda(W_i+\lambda^{i-1}\H_+) \notag
\\
	&=& \Phi_i(\alpha_{i+1})+\lambda W_{i-1}
=\Phi_{i-1}\bigl((\pi_{\alpha_i}+\lambda\pi_{\alpha_i}^{\perp})(\alpha_{i+1})+\lambda \H_+ \bigr) \notag
\\
	&=& \Phi_{i-1}\bigl(\pi_{\alpha_i}(\alpha_{i+1})+\lambda \H_+ \bigr).
		\label{W_i+1}
\end{eqnarray}

Now, if $W_i= W_{i+1} + \lambda^i \H_+$\,, then applying $\Phi_{i-1}^{-1}$ to the above gives
$$
\Phi_{i-1}^{-1}W_i=\pi_{\alpha_i}(\alpha_{i+1})+\lambda \H_+\,.
$$ 
By Proposition \ref{pr:factorizations}, $P_0\Phi_{i-1}^{-1}W_i=\alpha_i$\,, so that the last equation implies \eqref{Seg-cov}.

Conversely, if \eqref{Seg-cov} holds, then the right-hand side of \eqref{W_i+1} equals
$$
\Phi_{i-1}(\alpha_i + \lambda \H_+) = \Phi_i\H_+ = W_i\,,
$$
which establishes (i). The proof of (ii) is similar.  
\end{proof}

\subsection{$S^1$-invariant polynomials} \label{subsec:invt}

Recall (e.g.\ \cite[\S 7]{uhlenbeck}) that there is an $S^1$-action on $\Omega\U n$ given by 
$(\mu^*\Phi)_{\lambda}=\Phi_{\mu\lambda}\Phi_{\mu}^{-1}$
	\ $\bigl(\mu\in S^1, \ \Phi\in\Omega\U n \bigr).$
We now identify all $S^1$-invariant polynomials, i.e., polynomials
$\Phi \in \Omega\U n$ which satisfy
$\Phi_{\lambda}\Phi_{\mu}=\Phi_{\lambda\mu}$
	\ $(\lambda, \mu\in S^1)$.

The next result follows easily from \cite[\S 10]{uhlenbeck}.

\begin{proposition}\label{pr:alg-iso}
$\Phi\in\Omega_r\U n$ so that $W = \Phi\H_+\in\Gr_r$.
Denote by $\beta_1,\ldots,\beta_r$ and $\gamma_1,\ldots,\gamma_r$ the subspaces of\/ $\cn^n$ corresponding to the Segal and the Uhlenbeck filtrations of\/ $W$, respectively, so that
\begin{equation} \label{Seg-Uhl-fact} 
\Phi = (\pi_{\beta_1}+\lambda\pi_{\beta_1}^{\perp})\cdots(\pi_{\beta_r}+\lambda\pi_{\beta_r}^{\perp}) 
	 =(\pi_{\gamma_1}+\lambda\pi_{\gamma_1}^{\perp})\cdots(\pi_{\gamma_r}+\lambda\pi_{\gamma_r}^{\perp}). 
\end{equation}
Then the following are equivalent: 
\begin{itemize}
\item[(i)] $\beta_i\subset\beta_{i+1}$ \ $(i=1,\dots,r-1);$
\item[(ii)] $\Phi$ is $S^1$-invariant$;$
\item[(iii)] $W=\sum_{i=0}^{r-1}\lambda^i\beta_{i+1}+\lambda^r\H_+;$
\item[(iv)] $W=\sum_{i=0}^{r-1}\lambda^iP_iW+\lambda^r\H_+;$
\item[(v)] $\gamma_{i+1}\subset\gamma_i$ \ $(i=1,\dots,r-1).$ 
\end{itemize}
Furthermore, if any of the above hold, then $\gamma_i=\beta_{r-i+1}$ for all $i=1,\dots,r$. 
\qed
\end{proposition}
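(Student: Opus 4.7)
The plan is to pivot everything through the structural characterization that (ii) is equivalent to the existence of an orthogonal decomposition
$\cn^n = V_0 \oplus V_1 \oplus \cdots \oplus V_r$
such that $\Phi_\lambda = \sum_{k=0}^r \lambda^k E_k$, where $E_k$ is the orthogonal projection onto $V_k$. This is essentially the content of \cite[\S 10]{uhlenbeck}; a direct derivation substitutes $\Phi_\lambda = \sum_k \lambda^k T_k$ into $\Phi_\lambda\Phi_\mu = \Phi_{\lambda\mu}$, forcing $T_a T_b = \delta_{ab} T_a$ by matching Fourier coefficients, while $\Phi_1 = I$ together with unitarity gives $\sum T_k = I$ and $T_k^* = T_k$, so that the $T_k$ are mutually orthogonal projections.

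Assume this structural form. Then $W = \Phi\H_+ = \bigoplus_k \lambda^k \H_+(V_k)$, so $P_i W = V_0 \oplus \cdots \oplus V_i$ for $i<r$, which is (iv); the same description of $W$ rearranges as $\sum_{i=0}^{r-1} \lambda^i (V_0 \oplus \cdots \oplus V_i) + \lambda^r \H_+$, matching the form (iii) once $\beta_{i+1} = V_0 \oplus \cdots \oplus V_i$. To extract this identification, and (i), I would iterate the Segal step: $W_i^S = W + \lambda^i \H_+ = \sum_{k<i} \lambda^k \H_+(V_k) + \lambda^i \H_+$ equals $\Phi_i^S \H_+$ for the $S^1$-invariant polynomial $\Phi_i^S = \sum_{k<i} \lambda^k E_k + \lambda^i \sum_{k \ge i} E_k$, and the quotient $(\Phi_{i-1}^S)^{-1} \Phi_i^S$ acts as $1$ on $V_k$ for $k<i$ and as $\lambda$ for $k \ge i$, so equals $\pi_{\beta_i} + \lambda \pi_{\beta_i}^\perp$ with $\beta_i = V_0 \oplus \cdots \oplus V_{i-1}$. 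An entirely parallel computation of $W_i^U = (\lambda^{i-r} W) \cap \H_+$ yields $\gamma_i = V_0 \oplus \cdots \oplus V_{r-i}$, giving (v) and the closing identity $\gamma_i = \beta_{r-i+1}$.

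For the converse implications: (i) $\Rightarrow$ (ii) follows because nested subspaces $\beta_1 \subset \cdots \subset \beta_r$ force the projections $\pi_{\beta_i}$ to commute pairwise; setting $V_0 = \beta_1$, $V_k = \beta_{k+1} \cap \beta_k^\perp$ for $1 \le k \le r-1$ and $V_r = \beta_r^\perp$ simultaneously diagonalizes every factor of \eqref{Seg-Uhl-fact}, and on $V_k$ the product picks up a $\lambda$ from exactly those $k$ factors with index $i \le k$, giving $\Phi = \sum_k \lambda^k E_k$. Implication (iii) $\Rightarrow$ (i) uses $\lambda$-closure of $W$: for $v \in \beta_j$ with $j<r$, $\lambda^j v = \lambda \cdot (\lambda^{j-1} v) \in \lambda W \subset W$, and matching the $j$-th Fourier coefficient in the display of (iii) forces $v \in \beta_{j+1}$. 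Implication (iv) $\Rightarrow$ (i) proceeds analogously: $\lambda$-closure first yields $P_i W \subset P_{i+1} W$, and then running the (i) $\Rightarrow$ (ii) argument with $V_k := P_k W \ominus P_{k-1} W$ recovers the structural form and gives $\beta_i = V_0 \oplus \cdots \oplus V_{i-1} = P_{i-1} W$, and hence (i). Finally, (v) $\Rightarrow$ (ii) is word-for-word symmetric to (i) $\Rightarrow$ (ii), with the Uhlenbeck factorization in place of the Segal one. The only real obstacle is the iterative bookkeeping for the Segal and Uhlenbeck factorizations of the structural $\Phi$; once the commutativity of the nested-subspace projections is noted, simultaneous diagonalization reduces everything to one-dimensional computations on each $V_k$, with Proposition \ref{pr:factorizations} and Corollary \ref{co:factorizations} handling the rest.
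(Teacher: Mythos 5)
Your proposal is correct, and it takes essentially the route the paper intends: the paper gives no proof, deferring to \cite[\S 10]{uhlenbeck}, whose content is precisely the spectral form $\Phi_\lambda=\sum_k\lambda^k E_k$ for commuting orthogonal projections $E_k$ that you re-derive and then use to read off the Segal and Uhlenbeck subspaces as the partial sums $V_0\oplus\cdots\oplus V_{i-1}$ and $V_0\oplus\cdots\oplus V_{r-i}$. The only points worth making fully explicit are the orthogonality of the $V_k$ (from $\langle\Phi_\lambda v,\Phi_\lambda w\rangle=\lambda^{a-b}\langle v,w\rangle$ for $v\in V_a$, $w\in V_b$) and, in (iv)$\Rightarrow$(i), that $\Phi$ coincides with the loop $\sum_k\lambda^k E_k$ built from $V_k=P_kW\ominus P_{k-1}W$ because the correspondence \eqref{Grass-model} is bijective; both are routine.
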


\begin{corollary} \label{co:S1-preserve}
Let $\Phi \in \Omega_r\U n$ be $S^1$-invariant.
Let $\wt{W} = \wt{\Phi}\H_+ \in \Omega_{r-1}\U n$ be obtained from $W = \Phi\H_+$ be a Segal or Uhlenbeck step.  Then $\wt{\Phi}$ is also $S^1$-invariant.
\end{corollary}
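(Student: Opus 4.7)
My plan is to invoke the two equivalent characterizations of $S^1$-invariance furnished by Proposition \ref{pr:alg-iso}: condition (i), i.e., the nesting $\beta_i \subset \beta_{i+1}$ of the Segal subspaces, and condition (v), i.e., the reverse nesting $\gamma_{i+1} \subset \gamma_i$ of the Uhlenbeck subspaces. The key observation I will use is that if $\wt W \in \Gr_{r-1}$ is obtained from $W \in \Gr_r$ by a Segal (resp.\ Uhlenbeck) step, then the Segal (resp.\ Uhlenbeck) filtration of $\wt W$ is simply a truncation of the corresponding filtration of $W$. Consequently the Segal (resp.\ Uhlenbeck) subspaces of $\wt W$ are inherited unchanged from those of $W$ (with the last one dropped), so the relevant nesting property passes from $W$ to $\wt W$.

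For the Segal case I would use $\wt W = W + \lambda^{r-1}\H_+$ and note that, for $0 \leq i \leq r-1$,
$$
\wt W + \lambda^i \H_+ = W + \lambda^{r-1}\H_+ + \lambda^i\H_+ = W + \lambda^i\H_+ = W_i^S,
$$
so the Segal filtration of $\wt W$ is precisely $(W_0^S,\ldots, W_{r-1}^S)$, with associated subspaces $\beta_1,\ldots,\beta_{r-1}$. Since $\Phi$ is $S^1$-invariant, these satisfy $\beta_i \subset \beta_{i+1}$ for $i = 1,\ldots, r-2$ by condition (i), so the same condition applied to $\wt W$ yields that $\wt\Phi$ is $S^1$-invariant.

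For the Uhlenbeck case I would use $\wt W = (\lambda^{-1}W)\cap \H_+$ and verify, for $0 \leq i \leq r-1$,
$$
(\lambda^{i-(r-1)}\wt W)\cap\H_+ = (\lambda^{i-r}W \cap \lambda^{i-r+1}\H_+)\cap \H_+ = (\lambda^{i-r}W)\cap\H_+ = W_i^U,
$$
where in the middle step I use that $\lambda^{i-r+1}\H_+ \supset \H_+$ when $i \leq r-1$. Hence the Uhlenbeck filtration of $\wt W$ is $(W_0^U,\ldots, W_{r-1}^U)$, with associated subspaces $\gamma_1,\ldots,\gamma_{r-1}$ which inherit $\gamma_{i+1}\subset\gamma_i$ from the $S^1$-invariance of $\Phi$; condition (v) then gives $S^1$-invariance of $\wt\Phi$. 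The only actual computation is the short intersection identity in the Uhlenbeck case; everything else is a formal consequence of Proposition \ref{pr:alg-iso}, so I do not anticipate any genuine obstacle.
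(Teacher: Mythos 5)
Your proof is correct. Both computations check out: in the Segal case $\wt W+\lambda^i\H_+=W+\lambda^i\H_+$ because $\lambda^{r-1}\H_+\subset\lambda^i\H_+$ for $i\leq r-1$, and in the Uhlenbeck case the intersection with $\lambda^{i-r+1}\H_+\supset\H_+$ is indeed redundant; since a $\lambda$-filtration determines its associated subspaces via Proposition \ref{pr:factorizations}, the truncation claim does give that the Segal (resp.\ Uhlenbeck) subspaces of $\wt W$ are $\beta_1,\ldots,\beta_{r-1}$ (resp.\ $\gamma_1,\ldots,\gamma_{r-1}$), and conditions (i) and (v) of Proposition \ref{pr:alg-iso} finish the argument. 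The paper takes a different, and shorter, route through the same proposition: it invokes condition (iv), namely $W=\sum_{i=0}^{r-1}\lambda^iP_iW+\lambda^r\H_+$, and simply observes that this form is visibly preserved by $W\mapsto W+\lambda^{r-1}\H_+$ and by $W\mapsto(\lambda^{-1}W)\cap\H_+$ (with $r$ replaced by $r-1$). The paper's argument is a one-line verification on the subspace $W$ itself and never mentions the unitons; yours is more structural and proves the somewhat stronger and independently useful fact that a Segal (resp.\ Uhlenbeck) step truncates the Segal (resp.\ Uhlenbeck) filtration, so the corresponding subspaces of $\wt W$ are literally those of $W$ with the last one dropped --- a statement consistent with, and refining, the relation $\gamma_i=\beta_{r-i+1}$ recorded at the end of Proposition \ref{pr:alg-iso}. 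The price is that your argument is tied to the specific filtration matching the type of step taken, whereas the paper's condition (iv) handles both cases uniformly.
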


\begin{proof}
It is easy to see that, if
$W = \Phi\H_+$ satisfies (iv), then
$W^S_{r-1} = W + \lambda^{r-1}\H_+$ and
$W^U_{i-1} = (\lambda^{-1}W) \cap \H_+$ continue to satisfy (iv) (with $r$ replaced by $r-1$).
\end{proof}

For $S^1$-invariant harmonic maps, see
\S \ref{subsec:s1-invariant}.

\section{Harmonic maps and extended solutions}\label{sec:extended}

\subsection{Basic facts} \label{subsec:basic}
We review some well-known facts about harmonic maps, extended solutions, and their Grassmannian models; our main references are \cite{uhlenbeck}, \cite{guest-book} and \cite{burstall-guest}. {}From now on,
$M$ will denote a Riemann surface, and $G$ will denote $\U n$ or a compact Lie subgroup
of $\U n$, equipped with the natural bi-invariant metric from $\U n$. All maps and sections are assumed smooth unless otherwise stated. For any complex vector space $V$, we denote by $\ul{V}$ the trivial bundle $M\times V$ over $M$

For any map $\varphi:M\to G$, we define a $1$-form with values in the Lie algebra $\g$ of $G$ as half the pull-back of the Maurer-Cartan form, i.e., 
$
A^{\varphi}=\frac{1}{2}\varphi^{-1}\d\varphi.
$

Let $\U n$ act on $\cn^n$ in the standard way.
Then $D^{\varphi} = \d+A^{\varphi}$
defines a unitary connection on the trivial bundle $\CC^n$.
We decompose $A^{\varphi}$ and $D^{\varphi}$ into types: to do this, for convenience we take a local complex coordinate $z$ on an open set $U$ of $M$ and write
$\d\varphi = \varphi_z \d z + \varphi_{\zbar}\d\zbar$,
$A = A^{\varphi}_z \d z +  A^{\varphi}_{\zbar} \d\zbar$,
$D^{\varphi} = D^{\varphi}_z \d z + D^{\varphi}_{\zbar} \d\zbar$,
$\pa_z = \pa/\pa z$ and $\pa_{\zbar} = \pa/\pa\zbar$; then
$$
A^{\varphi}_z=\frac{1}{2}\varphi^{-1}\varphi_z\,,\quad A^{\varphi}_{\zbar}=\frac{1}{2}\varphi^{-1}\varphi_{\zbar}\,, \quad
D^{\varphi}_z = \pa_z + A^{\varphi}_z \,,\quad
D^{\varphi}_{\zbar} = \pa_{\zbar} + A^{\varphi}_{\zbar} \,.
$$

The \emph{(Koszul--Malgrange) holomorphic structure induced by $\varphi$} is the unique holomorphic structure on $\CC^n$ with $\bar{\pa}$-operator given locally by $D^{\varphi}_{\bar z}$;  we denote the resulting holomorphic vector bundle by $(\CC^n, D^{\varphi}_{\bar z})$.   If $\varphi$ is constant, $D^{\varphi}_{\bar z} = \pa_{\zbar}$, giving $\CC^n$ the standard (product) holomorphic structure. 
Now \cite{uhlenbeck} a map $\varphi:M\to G$ is harmonic if and only if
$A^{\varphi}_z$ is a holomorphic endomorphism of the holomorphic vector bundle
$(\CC^n, D^{\varphi}_{\bar z})$.  In particular its image
 and kernel form holomorphic subbundles of
$(\CC^n, D^{\varphi}_{\bar z})$, defined away from the discrete subset of $M$ where the rank of $A^{\varphi}_z$ drops; it is clear that these subbundles are independent of the local complex coordinate $z$.
By `filling out zeros' as in \cite[Proposition 2.2]{burstall-wood}, these
image and kernel subbundles can be extended to holomorphic subbundles over the whole
of $M$, which we shall denote by
$\Ima A^{\varphi}_z$ and $\ker A^{\varphi}_z$, respectively. 

Let $\g^{\cn}$ denote the complexified Lie algebra $\g \otimes \cn$.
 
\begin{definition} A smooth map $\Phi:M\to\Omega G$ is said to be an \emph{extended solution} if, with respect to any local holomorphic
coordinate $z$ on $U \subset M$, we have 
$$
\Phi^{-1}\Phi_z=(1-\lambda^{-1})A,
$$
for some map $A:U\to\g^{\cn}$. 
\end{definition}

For any map $\Phi:M \to \Omega G$ and $\lambda \in S^1$, we define
$\Phi_{\lambda}:M \to G$  by $\Phi_{\lambda}(p) = \Phi(p)(\lambda)$ \ $(p \in M)$.
If $\Phi:M\to\Omega G$ is an extended solution,  the map
$\varphi=\Phi_{-1}:M\to G$ is harmonic
and
$\varphi^{-1}\varphi_z=2A$, so that $A = A^{\varphi}_z$.

Conversely, given a harmonic map $\varphi:M\to G$, an extended solution $\Phi:M\to\Omega G$ satisfying
$$
\Phi^{-1}\Phi_z=(1-\lambda^{-1})A^{\varphi}_z
$$
is  said to be
\emph{associated} to $\varphi$. In this case, $\varphi= g \/\Phi_{-1}$
for some $g\in G$.  Extended solutions always exist locally;
they exist globally if the domain $M$ is simply-connected, for example if $M = S^2$. Further, \emph{uniqueness} is achieved by specifying an initial value $\Phi(z_0) \in \Omega G$ for some $z_0 \in M$.
Note that any two extended solutions  $\Phi$, $\wt{\Phi}$ associated to the same map $\varphi$ differ by a loop, i.e., $\Phi = \gamma \Phi$ for some
$\gamma \in \Omega G$.

For any $N \in \nn$, let $G_*(\cn^N)$ denote the Grassmannian of subspaces of $\cn^N$; thus $G_*(\cn^N)$ is the disjoint union of the complex Grassmannians   $G_k(\cn^N)$ for $k \in \{0,1,\ldots, N\}$.
We shall frequently identify a map $W:M \to G_k(\cn^N)$ with the rank $k$ subbundle 
of $\CC^N = M \times \cn^N$ whose fibre at $p \in M$ is $W(p)$; we denote this subbundle also by $W$ (not underlining, in contrast to \cite{burstall-wood,ferreira-simoes-wood}).

For a smooth map $\Phi:M\to\Omega \U n$, set
$W=\Phi\H_+:M\to\Gr$. It is easy to see that $\Phi$ is an extended solution if and only if $W$ satisfies the two conditions:
\begin{equation} \label{W-ext-sol} 
\text{(a) } \ \pa_{\zbar} \sigma \in \Gamma(W),
	\quad \text{(b) } \ \lambda\, \pa_z\sigma \in \Gamma(W)
		\qquad \bigl(\sigma\in\Gamma(W) \bigr);
\end{equation}
here $\Gamma(\cdot)$ denotes the space of smooth sections of a vector bundle.   Condition (a) says that $W$ is a holomorphic subbundle of the trivial holomorphic bundle
$\HH = (M \times \H, \pa_{\zbar})$,
and condition (b) says that it is closed under the operator  $F:\Gamma(\HH) \to \Gamma(\HH)$ given by
\begin{equation} \label{F}
F = \lambda \pa_z\,,
	\quad \text{i.e.,}
		\quad F(\sigma)=\lambda\,\pa_z \sigma
		\quad \bigl(\sigma \in \Gamma(\HH)\bigr).
\end{equation}

Conversely, if $W:M \to \Gr$ is a map satisfying
conditions \eqref{W-ext-sol}, then $W=\Phi \HH_+$ for some extended solution $\Phi:M\to\Omega \U n$.
We shall therefore call both $W$ and $\Phi$
extended solutions; we shall also refer to $W$
as the \emph{Grassmannian model} of $\Phi$.   

An extended solution is called \emph{algebraic} if it has a finite Laurent expansion
$\Phi = \sum_{k=s}^r \lambda^k T_k$ where $r\geq s$ are integers and the $T_k:M \to \gl{n,\cn}$ are smooth maps.  An argument of
 Uhlenbeck \cite[Theorem 11.5]{uhlenbeck} shows that, if $M$ is compact and
$\varphi:M \to \U n$ admits an associated extended solution, then it has an algebraic associated 
extended solution $\Phi$.  Indeed, fix a base point $z_0 \in M$; then
the extended solution satisfying the initial condition
$\Phi_{\lambda}(z_0) = I$ \ $(\lambda \in S^1)$
is algebraic, see
\cite[Theorem 4.2]{ohnita-valli} where this is extended to pluriharmonic maps.  In particular, any harmonic map $\varphi:S^2 \to \U n$ has an
algebraic associated extended solution.

There is a one-to-one correspondence between algebraic extended
 solutions $\Phi$  and extended solutions $W$ satisfying 
$\lambda^r \HH_+\subset W\subset \lambda^{s}\HH_+$
for some integers $r \geq s$ (which depend on $W$).  Note that we can think of $W$ as a subbundle of the trivial bundle
$M \times (\lambda^{s} \HH_+/\lambda^r\HH_+)$, and this may be canonically identified with the trivial holomorphic bundle
$\CC^{(r-s)n} = (M \times \cn^{(r-s)n}, \pa_{\zbar})$.

Let $\varphi:M \to \U n$ be a harmonic map.
Then a subbundle $\alpha$ of $\CC^n$ is said to be a \emph{uniton for $\varphi$} if it is
\begin{itemize}
\item[(i)] holomorphic with respect to the Koszul--Malgrange holomorphic structure induced by $\varphi$, i.e., 
$D^{\varphi}_{\zbar}(\sigma)\in\Gamma(\alpha)\quad(\sigma\in\Gamma(\alpha));$

\item[(ii)] closed under the endomorphism $A^{\varphi}_z$, i.e., 
$A^{\varphi}_z(\sigma)\in\Gamma(\alpha)\quad(\sigma\in\Gamma(\alpha))$.
\end{itemize}

Let $\varphi:M\to\U n$ be a harmonic map. Uhlenbeck showed \cite{uhlenbeck} that if a subbundle $\alpha\subset\CC^n$ is a uniton for $\varphi$ then $\wt\varphi=\varphi(\pi_{\alpha}-\pi_{\alpha}^{\perp})$ is harmonic.

\begin{example} Any holomorphic subbundle of\/ $(\CC^n,D^{\varphi}_{\zbar})$ contained in $\ker A^{\varphi}_z$ is a uniton for $\varphi$; we call such a uniton \emph{basic}. Any holomorphic subbundle of\/ $(\CC^n,D^{\varphi}_{\zbar})$ containing $\Ima A^{\varphi}_z$ is also a uniton; following Piette and Zakrzewski, see \cite{zak}, we call such a uniton \emph{antibasic}.
\end{example}

\begin{example} \label{ex:cartan-emb}
It is well known (see \cite{burstall-guest}) that any connected compact inner symmetric space can be immersed in a Lie group $G$ as a component of 
$
\sqrt{e}=\{g\in G\ :\ g^2=e\},
$
and the immersion is totally geodesic. For example, when $G = \U n$, then $\sqrt{e}=\{g\in G\ :\ g^2=e\}$ is the disjoint union $G_*(\cn^n)$ of the complex Grassmannians
$G_k(\cn^n)$ for $k \in \{0,1,\ldots, n\}$, and we have the totally geodesic Cartan embedding 
\begin{equation} \label{cartan}
\iota:G_*(\cn^n)\hookrightarrow \U n,\quad \iota(V)=\pi_V-\pi_{V}^{\perp}.
\end{equation}
Note that $\iota(V^{\perp}) = -\iota(V)$; however, we shall normally write $\iota(V)$ simply as $V$.

Let $\varphi:M \to G_*(\cn^n)$ be a smooth map and $\alpha$
a subbundle of $\CC^n$,  Then \cite{uhlenbeck},
$\wt{\varphi} = \varphi(\pi_{\alpha}-\pi_{\alpha}^{\perp})$ has image in a Grassmannian if and only if $\pi_{\alpha}$ commutes with $\pi_{\varphi}$, and this holds if and only if $\alpha$ is the direct sum of subbundles $\beta$ and $\gamma$ of $\varphi$ and $\varphi^{\perp}$, in which case
$\wt{\varphi} = \beta^{\perp} \!\cap \varphi\, \oplus \gamma$.

An important special case is when $\beta = \varphi$ and $\gamma = \Ima A^{\varphi}_z|_{\varphi}$, in which case
$\alpha$ is a uniton and $\wt{\varphi}$ is called the
\emph{$\pa'$-Gauss transform} $G'(\varphi)$ of $\varphi$; see
\cite{burstall-wood} for another description.    
\end{example}

We say that $\varphi$ is of \emph{finite uniton number} if, for some $r \in \nn$, we can write it as
\begin{equation} \label{phi-fact}
\varphi = \varphi_0(\pi_{\alpha_1}-\pi_{\alpha_1}^{\perp})\cdots(\pi_{\alpha_r}-\pi_{\alpha_r}^{\perp})
\end{equation}
where $\varphi_0 \in \U n$ is constant, and each $\alpha_i$ is a uniton for the partial
product
\begin{equation} \label{phi_i}
\varphi_{i-1} = \varphi_0(\pi_{\alpha_1}-\pi_{\alpha_1}^{\perp})\cdots
	(\pi_{\alpha_{i-1}}-\pi_{\alpha_{i-1}}^{\perp})\,.
\end{equation}
The minimum value of $r$ for which \eqref{phi-fact} holds is called the \emph{(minimal) uniton number of\/ $\varphi$}.
Uhlenbeck showed that any harmonic map from a compact Riemann surface to $\U n$ which admits an associated extended solution, in particular, any harmonic map from $S^2$ to $\U n$, has finite (minimal) uniton number at most $n-1$.

Now suppose that $\Phi$ is any extended solution associated to $\varphi$, then Uhlenbeck showed further that $\alpha$ is a uniton for $\varphi$ if and only if  $\wt\Phi=\Phi(\pi_{\alpha}+\lambda\pi_{\alpha}^{\perp})$ is also an extended solution
(associated to $\wt\varphi=\varphi(\pi_{\alpha}-\pi_{\alpha}^{\perp})$\,).
We shall therefore also say that $\alpha$ is a \emph{uniton for\/ $\Phi$}.  

\begin{definition} \label{def:uniton-fact}
Let $\Phi:M \to \Omega\U n$ be a polynomial extended solution.
By a \emph{uniton factorization of\/ $\Phi$} we mean a
product
\begin{equation}\label{Phi-fact2}
\Phi=(\pi_{\alpha_1}+\lambda\pi_{\alpha_1}^{\perp})\cdots(\pi_{\alpha_r}+\lambda\pi_{\alpha_r}^{\perp}),
\end{equation}
where each $\alpha_i$ is a uniton for 
$
\Phi_{i-1} = (\pi_{\alpha_1}+\lambda\pi_{\alpha_1}^{\perp})\cdots
(\pi_{\alpha_{i-1}}+\lambda\pi_{\alpha_{i-1}}^{\perp}).
$
\end{definition}

Note that, if $\varphi$ is given by \eqref{phi-fact}, then \eqref{Phi-fact2} is an extended solution for it; in fact,
for each $i$, the map $\Phi_{i-1}$
is an extended solution for \eqref{phi_i}\,.

Set $W=\Phi\H_+$\,. Then
\emph{a uniton factorization of $\Phi$ is equivalent to a
$\lambda$-filtration $(W_i)$ of\/ $W$ with each $W_i$ an extended solution}, the
 equivalence is given by $W_i = \Phi_i \HH_+$\,.
{}From now on, by a \emph{$\lambda$-filtration
of an extended solution $W$}, we shall mean a
$\lambda$-filtration $(W_i)$ by subbundles of $\CC^n$ where each subbundle $W_i$ in the filtration is an extended solution. That such filtrations exist is shown by the following example.

\begin{example}
Given an extended solution $W:M \to \Gr_r$, all the subbundles
$W_i^S = W + \lambda^i\HH_+$ and
$W_i^U = \lambda^{-i} W\cap\HH_+$ in the Segal and
 Uhlenbeck filtrations \eqref{Seg-filt}, \eqref{Uhl-filt} of $W$ are extended solutions, and the corresponding subbundles $\alpha_i$ are unitons, which we call the \emph{Segal} and \emph{Uhlenbeck unitons}, respectively.  Denoting these by $\beta_i$ and $\gamma_i$, we call the factorizations in \eqref{Seg-Uhl-fact} the \emph{Segal} and \emph{Uhlenbeck factorizations}.
 
For a different type of factorization, see Example
\ref{ex:Gauss-filtr}. 
\end{example}

It follows that any polynomial extended
solution $\Phi$ has a factorization into unitons, thus \emph{a harmonic map $\varphi$ from a Riemann surface to $\U n$ is of finite uniton number if and only if it has a  polynomial associated extended solution}.  As above, this holds when $M = S^2$,
or when $M$ is compact and $\varphi$ admits some (not necessarily algebraic) associated extended solution. 

\begin{remark} \label{rem:fi-uniton}
If $\varphi$ has finite uniton number, then any associated extended solution $\Phi$ which satisfies an initial condition $\Phi(z_0) = Q$ for some $z_0 \in M$ and $Q \in \Omega_{\alg}\U n$ is algebraic.
Indeed, by hypothesis, $\varphi$ admits a polynomial associated extended solution $\wt{\Phi}$, and the associated extended solution $\Phi$ with $\Phi(z_0) = Q$ is given by $\Phi = Q\wt{\Phi}(z_0)^{-1}\wt{\Phi}$, which is manifestly algebraic.
\end{remark}

Note that all the algebraic formulae of Section \ref{sec:alg} for filtrations and their associated factorizations apply to
$\lambda$-filtrations of an extended solution, with the subbundles $\alpha_i$ now unitons.  In particular, the formulae for the Segal and Uhlenbeck unitons in Proposition \ref{prop:Seg-Uhl} give these as
$\ker(T_i^i)$ and $\Ima(S^i_0)$, respectively; the next lemma ensures that these are well-defined after filling out zeros.

\begin{lemma} \label{le:hol-TS} \cite{he-shen1}
Let $\Phi:M \to \Omega G$ be an extended solution given by
\eqref{Phi-T-S}.  Then

 \ \!{\rm(i)} $T_i^i$ is a holomorphic endomorphism from $(\CC^n,D^{\varphi}_{\zbar})$
to $(\CC^n,\pa_{\zbar});$

{\rm (ii)} $S^i_0$ is a holomorphic endomorphism from $(\CC^n,\pa_{\zbar})$ to $(\CC^n,D^{\varphi}_{\zbar})$.
\qed \end{lemma}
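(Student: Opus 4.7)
I would verify each claim by computing the $\bar{\partial}$-derivative of the map in question, viewed as a section of the appropriate $\mathrm{Hom}$-bundle, and showing that it vanishes. In a local coordinate, (i) reduces to the identity $(T_i^i)_{\zbar}=T_i^i\,A^{\varphi}_{\zbar}$ and (ii) to $(S_0^i)_{\zbar}=-A^{\varphi}_{\zbar}\,S_0^i$; these come from the standard local formula for the $\bar{\partial}$-operator on $\mathrm{Hom}$ of two holomorphic bundles.

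First I would extract the companion $\zbar$-equation from the one defining an extended solution. Since $\Phi_\lambda\in\U n$ for each $\lambda\in S^1$, taking the $*$-adjoint of $\Phi^{-1}\Phi_z=(1-\lambda^{-1})A^{\varphi}_z$ and using the $\mathfrak u(n)$-identity $A^{\varphi}_{\zbar}=-(A^{\varphi}_z)^*$ yields the pair
\[
\Phi^{-1}\Phi_{\zbar}=(1-\lambda)A^{\varphi}_{\zbar},
\qquad
(\Phi^{-1})_{\zbar}=-(1-\lambda)A^{\varphi}_{\zbar}\,\Phi^{-1}.
\]

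Next, substitute the Laurent expansions $\Phi=\sum_{k=0}^{i}\lambda^k T_k^i$ and $\Phi^{-1}=\sum_{s=0}^{i}\lambda^{-s}S_s^i$ into these $\zbar$-equations and match coefficients in $\lambda$. Two kinds of information emerge. \emph{Interior recurrences}: equating the coefficient at each $\lambda^k$ ($0\le k\le i$) and $\lambda^{-s}$ ($0\le s\le i$) produces relations of the form $(T_k^i)_{\zbar}=(T_k^i-T_{k-1}^i)A^{\varphi}_{\zbar}$ and $(S_s^i)_{\zbar}=A^{\varphi}_{\zbar}(S_{s+1}^i-S_s^i)$. \emph{Boundary constraints}: the Laurent coefficients lying \emph{outside} the degree range of $\Phi_{\zbar}$ (resp.\ $(\Phi^{-1})_{\zbar}$), namely $\lambda^{i+1}$ and $\lambda^{-1}$, must vanish, producing the algebraic identities
\[
T_i^i\,A^{\varphi}_{\zbar}=0 \qquad\text{and}\qquad A^{\varphi}_{\zbar}\,S_0^i=0,
\]
together with the analogous pair $T_0^iA^{\varphi}_z=0$, $A^{\varphi}_zS_i^i=0$ derived from the original $z$-equation.

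Combining the recurrence at the top index with its matching boundary constraint then collapses the expression for $\bar\partial_{\mathrm{Hom}}T_i^i=(T_i^i)_{\zbar}-T_i^iA^{\varphi}_{\zbar}$, and likewise for $\bar\partial_{\mathrm{Hom}}S_0^i=(S_0^i)_{\zbar}+A^{\varphi}_{\zbar}S_0^i$, to zero, establishing the required holomorphicity. Once $T_i^i$ and $S_0^i$ are known to be holomorphic bundle morphisms, their kernel and image are holomorphic subbundles of $(\CC^n,D^{\varphi}_{\zbar})$ off the discrete rank-drop set, and extend across that set by the standard filling-out-zeros procedure (as in \cite[Prop.~2.2]{burstall-wood}).

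\textbf{Main obstacle.} The essential combinatorial content is the Laurent coefficient bookkeeping: one must correctly track which ``overhanging'' boundary coefficient cancels which interior recurrence to produce the required identity, and keep the signs straight through the adjoint derivation of the $\zbar$-equation and the $\mathfrak u(n)$-identity for $A^{\varphi}$. Once that is sorted, both (i) and (ii) follow by the same pattern of ``top-index recurrence plus matching boundary vanishing,'' and the filling-out extension is routine.
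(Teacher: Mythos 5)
Your preparatory work is correct: the companion equation $\Phi^{-1}\Phi_{\zbar}=(1-\lambda)A^{\varphi}_{\zbar}$, the interior recurrences $(T^i_k)_{\zbar}=(T^i_k-T^i_{k-1})A^{\varphi}_{\zbar}$ and $(S^i_s)_{\zbar}=A^{\varphi}_{\zbar}(S^i_{s+1}-S^i_s)$, and the boundary constraints $T^i_i A^{\varphi}_{\zbar}=0$, $A^{\varphi}_{\zbar}S^i_0=0$ all hold. The fatal step is the claimed collapse at the end. Substituting the top-index recurrence and the boundary constraint into your expression gives
$$
(T^i_i)_{\zbar}-T^i_i A^{\varphi}_{\zbar}=(T^i_i-T^i_{i-1})A^{\varphi}_{\zbar}-0=-\,T^i_{i-1}A^{\varphi}_{\zbar},
\qquad
(S^i_0)_{\zbar}+A^{\varphi}_{\zbar}S^i_0=A^{\varphi}_{\zbar}S^i_1,
$$
and neither right-hand side vanishes in general. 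Indeed, the pair of identities you are aiming for, $(T^i_i)_{\zbar}=T^i_iA^{\varphi}_{\zbar}$ together with $T^i_iA^{\varphi}_{\zbar}=0$, would force $(T^i_i)_{\zbar}=0$, so $T^i_i$ would be a globally holomorphic matrix-valued function (constant when $M=S^2$). Already for the one-uniton solution $\Phi=\pi_{\alpha}+\lambda\pi_{\alpha}^{\perp}$, with $\alpha$ a non-constant holomorphic subbundle of $(\CC^n,\pa_{\zbar})$, one has $T^1_1=\pi_{\alpha}^{\perp}$, $A^{\varphi}_{\zbar}=(\pi_{\alpha})_{\zbar}$, and $(T^1_1)_{\zbar}-T^1_1A^{\varphi}_{\zbar}=-(\pi_{\alpha})_{\zbar}\neq 0$ (likewise $(S^1_0)_{\zbar}+A^{\varphi}_{\zbar}S^1_0=(\pi_{\alpha})_{\zbar}\neq0$). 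So the quantity you are computing is genuinely nonzero; no amount of bookkeeping will make it cancel.

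The same example shows what is actually true, namely the statement with the two holomorphic structures interchanged: $(T^i_i)_{\zbar}+A^{\varphi}_{\zbar}T^i_i=0$ and $(S^i_0)_{\zbar}-S^i_0A^{\varphi}_{\zbar}=0$, i.e.\ $T^i_i$ is holomorphic from $(\CC^n,\pa_{\zbar})$ into $(\CC^n,D^{\varphi}_{\zbar})$ and $S^i_0$ in the opposite direction (either version suffices for the only use made of the lemma, the filling-out of zeros of $\ker T^i_i$ and $\Ima S^i_0$). But note that these amount to the identities $A^{\varphi}_{\zbar}T^i_i=T^i_{i-1}A^{\varphi}_{\zbar}$ and $A^{\varphi}_{\zbar}S^i_1=S^i_0A^{\varphi}_{\zbar}$, which do \emph{not} follow from the first-order Laurent recurrences alone; some further input is needed, the cleanest in this paper's framework being the Grassmannian-model statement of Proposition \ref{pr:operators} and Corollary \ref{co:hol-ker}, that $P_0\circ\Phi^{-1}$ is a holomorphic map $(W,\pa_{\zbar})\to(\CC^n,D^{\varphi}_{\zbar})$; applying it to suitable holomorphic subbundles of $W$ (e.g.\ $\lambda^i\CC^n\subset\lambda^i\HH_+\subset W$, whose image is $\Ima S^i_i=(\ker T^i_i)^{\perp}$) yields the holomorphicity of the relevant kernels and images directly, bypassing the pointwise $\bar\pa_{\mathrm{Hom}}$-computation altogether.
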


\begin{example}\label{ex:factorization} 
Let $\Phi:M \to \Omega_r\U n$ be an extended solution and consider the map 
$\Psi=\lambda^r\ov{\Phi}:M \to \Omega_r\U n$ obtained by applying the
involution of Remark \ref{rem:duality}.
This is easily seen to be an extended solution associated to the harmonic map $\ov{\varphi}$ where $\varphi = \Phi_{-1}$\,.
A uniton factorization \eqref{Phi-fact2} of $\Phi$ into  is equivalent to the factorization
$\Psi= (\pi_{\beta_1} + \lambda \pi_{\beta_1}^{\perp}) \cdots (\pi_{\beta_r} + \lambda \pi_{\beta_r}^{\perp})$
where $\beta_i=\ov\alpha_i^{\perp}$.  If the factorization of $\Phi$ is Segal (resp.\ Uhlenbeck) then the factorization of $\Psi$ is Uhlenbeck (resp.\ Segal). 
\end{example}

\subsection{Correspondence of operators under extended solutions}

As usual, \emph{let $\Phi:M\to\Omega\U n$ be an extended solution associated to a harmonic
map $\varphi$ and $W=\Phi \HH_+$ its Grassmannian model}. 
Note that $\Phi$ gives a linear bundle-isomorphism from
 $\HH_+$ to $W$, and this induces a linear isomorphism between the spaces of sections $\Gamma(\HH_+)$ and $\Gamma(W)$ which we continue to denote by $\Phi$.  

Consider the following three operators on $\Gamma(W)$:
(i) $\lambda$ induced by the linear map $W \to W$, \
$w \mapsto \lambda w$,
(ii) $\pa_{\zbar}$ defined by $\sigma
\mapsto \pa_{\zbar}\sigma$ \ $\bigl(\sigma \in \Gamma(W)\bigr)$, and
(iii)  $F = \lambda \pa_z$
defined by $\sigma\mapsto \lambda \pa_z\sigma$ \ $\bigl(\sigma \in \Gamma(W)\bigr)$  as in \eqref{F}.
In the next result, we see how these give rise to operators on $\Gamma(\HH_+)$.

\begin{proposition}\label{pr:operators}
Under the isomorphism $\Phi$, the operators $\lambda$, $\pa_{\zbar}$ and $F$ on $\Gamma(W)$ correspond to the
following operators on $\Gamma(\HH_+):$
$$
{\rm (i)} \
\Phi^{-1} \!\circ\! \lambda \!\circ\! \Phi = \lambda\,;, \
{\rm (ii)} \ \Phi^{-1} \!\circ\! \pa_{\zbar} \!\circ\! \Phi=D^{\varphi}_{\zbar} - \lambda A^{\varphi}_{\zbar}\,; \
{\rm (iii)} \ \Phi^{-1} \!\circ\! F \!\circ\! \Phi=\lambda D_z^{\varphi}-A_z^{\varphi}\,.
$$

In particular, the three operators induce the following operators on $\Gamma(\CC^n):$
$$
{\rm (i)} \ P_0 \!\circ\! \Phi^{-1} \!\circ\! \lambda \!\circ\! \Phi = 0\,; \
{\rm (ii)} \ P_0 \!\circ\! \Phi^{-1} \!\circ\! \pa_{\zbar} \!\circ\! \Phi=D^{\varphi}_{\zbar}\,;\
{\rm (iii)} \ P_0 \!\circ\! \Phi^{-1} \!\circ\! F \!\circ\! \Phi = -A^{\varphi}_z\,.
$$
 \end{proposition}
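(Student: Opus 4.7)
The strategy is direct computation: expand each of the three composed operators using the Leibniz rule, and simplify using the extended solution equation together with the fact that $\Phi$ is unitary on $S^1$. The second batch of formulae, on $\Gamma(\CC^n)$, will drop out at the end by projecting onto the $\lambda^0$ component.

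For (i), the map $\lambda:W\to W$ is $\cn$-linear in a parameter that commutes with $\Phi$ (viewed as multiplication in the loop group), so $\Phi^{-1}\!\circ\!\lambda\!\circ\!\Phi=\lambda$ is immediate for any section $\sigma\in\Gamma(\HH_+)$. For (iii), I would write, for $\sigma\in\Gamma(\HH_+)$,
\[
\Phi^{-1}\!\circ\! F \!\circ\!\Phi(\sigma)
   =\lambda\Phi^{-1}\bigl(\Phi_z\sigma+\Phi\,\pa_z\sigma\bigr)
   =\lambda(\Phi^{-1}\Phi_z)\sigma+\lambda\,\pa_z\sigma,
\]
and then apply the extended solution identity $\Phi^{-1}\Phi_z=(1-\lambda^{-1})A^{\varphi}_z$ to obtain $\lambda\,\pa_z\sigma+(\lambda-1)A^{\varphi}_z\sigma=\lambda D^{\varphi}_z\sigma-A^{\varphi}_z\sigma$, which is exactly the claim.

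The main input for (ii) is the ``anti-holomorphic'' companion of the extended solution equation. I would derive it by differentiating $\Phi^{*}\Phi=I$ (valid pointwise on $S^1$) with respect to $\bar z$, giving $\Phi^{-1}\Phi_{\bar z}=-(\Phi^{-1}\Phi_z)^{*}$; substituting the extended solution equation and using that on $S^{1}$ one has $\bar\lambda=\lambda^{-1}$, together with $(A^{\varphi}_z)^{*}=-A^{\varphi}_{\bar z}$ (a consequence of $A^{\varphi}$ being $\g$-valued), yields
\[
\Phi^{-1}\Phi_{\bar z}=(1-\lambda)A^{\varphi}_{\bar z}.
\]
Expanding $\Phi^{-1}\!\circ\!\pa_{\bar z}\!\circ\!\Phi(\sigma)=(\Phi^{-1}\Phi_{\bar z})\sigma+\pa_{\bar z}\sigma$ and inserting this identity gives $\pa_{\bar z}\sigma+A^{\varphi}_{\bar z}\sigma-\lambda A^{\varphi}_{\bar z}\sigma=D^{\varphi}_{\bar z}\sigma-\lambda A^{\varphi}_{\bar z}\sigma$, as claimed. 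The only point needing a little care is verifying the adjoint identity $\Phi^{-1}\Phi_{\bar z}=-(\Phi^{-1}\Phi_z)^{*}$ and the sign conventions it produces; this is the single place where I expect any friction.

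For the second batch of formulae, restrict to $\sigma\in\Gamma(\CC^{n})\subset\Gamma(\HH_{+})$, so that $\sigma$ is independent of $\lambda$. Then each of the three operators from the first part is a finite Laurent polynomial in $\lambda$ applied to sections of $\CC^{n}$, and $P_0$ simply extracts the $\lambda^{0}$-coefficient: from $\lambda\sigma$ we read off $0$; from $D^{\varphi}_{\bar z}\sigma-\lambda A^{\varphi}_{\bar z}\sigma$ we read off $D^{\varphi}_{\bar z}\sigma$; and from $\lambda D^{\varphi}_z\sigma-A^{\varphi}_z\sigma$ we read off $-A^{\varphi}_z\sigma$. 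This completes the proof.
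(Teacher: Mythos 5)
Your proof is correct and follows essentially the same route as the paper: expand by the Leibniz rule and substitute the extended solution equation, with (i) immediate and the $\Gamma(\CC^n)$ statements obtained by taking the $\lambda^0$-coefficient. The only difference is that you explicitly derive the companion identity $\Phi^{-1}\Phi_{\zbar}=(1-\lambda)A^{\varphi}_{\zbar}$ from unitarity, which the paper uses without comment; your derivation (including the signs) is sound.
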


\begin{proof}
(i) is trivial.
(ii)  For a section $f\in\Gamma(\HH_+)$ we have
\begin{align*}
(\Phi^{-1} \circ \pa_{\zbar} \circ \Phi)(f) &= 
\Phi^{-1}\bigl((\pa_{\zbar}\Phi)(f)
	+ \Phi(\pa_{\zbar}f)\bigr)= (\Phi^{-1}\pa_{\zbar}\Phi)(f) + \pa_{\zbar}f\\
	&=(1-\lambda)A^{\varphi}_{\zbar}f + \pa_{\zbar}f=D^{\varphi}_{\zbar}f- \lambda A^{\varphi}_{\zbar}f\,.
\end{align*}

(iii) is similar.
\end{proof}

Note that (ii) (and (iii)) express the well-known fact that $\Phi$ gauges  the flat connection induced by $\Phi_{\lambda}$ to the standard connection. 

\begin{corollary} \label{co:hol-ker}
Let $(W_i)$ be a $\lambda$-filtration of\/  $W$. Then the map
$P_0\circ\Phi_i^{-1}:(W_i,\pa_{\zbar}) \to (\CC^n, D^{\varphi_i}_{\zbar})$ is holomorphic and sends
{\rm (i)} $W_i$ onto $\CC^n$ with kernel $\lambda W_i\,;$ \
{\rm (ii)} $\lambda W_{i-1}$ onto $\alpha_i^{\perp}$ with kernel $\lambda W_i\,;$ \ 
{\rm (iii)} $W_{i+1}$ onto $\alpha_{i+1}$ with kernel $\lambda W_i$.
\end{corollary}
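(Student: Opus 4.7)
My plan is to derive all three statements from Proposition \ref{pr:operators}(ii) together with the identifications of the subspaces $\alpha_i$ and $\alpha_i^{\perp}$ already given by Proposition \ref{pr:factorizations} and Corollary \ref{co:factorizations}(i). The argument splits naturally into verifying the holomorphicity, computing the three images, and computing the three kernels.

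For the holomorphicity, I would apply Proposition \ref{pr:operators}(ii) to obtain $\Phi_i^{-1}\circ\pa_{\zbar} = (D^{\varphi_i}_{\zbar} - \lambda A^{\varphi_i}_{\zbar})\circ\Phi_i^{-1}$ on $\Gamma(W_i)$, then compose on the left with $P_0$. Since multiplication by $\lambda$ shifts every Fourier component up, we have $P_0\circ\lambda = 0$ on $\Gamma(\HH_+)$, which annihilates the $\lambda A^{\varphi_i}_{\zbar}$ term; this yields $P_0\circ\Phi_i^{-1}\circ\pa_{\zbar} = D^{\varphi_i}_{\zbar}\circ P_0\circ\Phi_i^{-1}$ on $\Gamma(W_i)$, which is precisely the intertwining relation needed for the map to be holomorphic as a map of holomorphic bundles.

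For the images, (i) is immediate from $\Phi_i\HH_+ = W_i$, so $P_0\Phi_i^{-1}W_i = P_0\HH_+ = \cn^n$; (ii) is Corollary \ref{co:factorizations}(i) restated; (iii) is formula \eqref{alpha_i} at level $i+1$, namely $\alpha_{i+1} = P_0\Phi_i^{-1}W_{i+1}$, using the inclusion $W_{i+1}\subset W_i$ so that $\Phi_i^{-1}W_{i+1}\subset\HH_+$ and thus $P_0$ has the expected behaviour.

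For the kernels, the decisive observation is that $\ker P_0\bigl|_{\HH_+} = \lambda\HH_+$, so on any subbundle $X\subset W_i$ with $\Phi_i^{-1}X\subset\HH_+$, the kernel of $P_0\circ\Phi_i^{-1}\bigl|_X$ equals $X\cap\Phi_i(\lambda\HH_+) = X\cap\lambda W_i$. Applying this case by case: (i) gives $\lambda W_i$ directly; (ii) gives $\lambda W_{i-1}\cap\lambda W_i = \lambda W_i$ via $W_i\subset W_{i-1}$; and (iii) gives $W_{i+1}\cap\lambda W_i = \lambda W_i$, using the filtration condition $\lambda W_i\subset W_{i+1}$ from \eqref{filt-conds} (applied with $i$ replaced by $i+1$). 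I do not anticipate a real obstacle: once Proposition \ref{pr:operators}(ii) and Corollary \ref{co:factorizations} are in place, the corollary amounts to bookkeeping, with the only mildly delicate point being to confirm that each source subbundle lands in $\HH_+$ under $\Phi_i^{-1}$, which follows from the inclusions in the filtration.
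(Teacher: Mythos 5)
Your proof is correct and follows essentially the same route as the paper's (very terse) proof: holomorphicity via Proposition \ref{pr:operators}, and the images and kernels via \eqref{alpha_i} and Corollary \ref{co:factorizations}(i), with the kernel computations reducing to $\ker P_0|_{\HH_+} = \lambda\HH_+$ and the filtration inclusions exactly as you describe. The only discrepancy is that the paper cites Proposition \ref{pr:operators}(iii) for holomorphicity whereas you use part (ii); since the claim is that $P_0\circ\Phi_i^{-1}$ intertwines the $\bar{\pa}$-operators $\pa_{\zbar}$ and $D^{\varphi_i}_{\zbar}$, your reference to (ii) is the appropriate one (the paper's ``(iii)'' appears to be a slip).
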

The corollary is illustrated by the following diagram, where all the vertical maps are surjections.
\begin{diagram}[height=4ex]
\lambda W_i & \subset & \lambda W_{i-1} & \subset & W_i & \supset & W_{i+1} & \supset & \lambda W_i \\
\dTo & & \dTo & & \dTo^{P_0 \circ \Phi_i^{-1}} & & \dTo & & \dTo \\
\ul{0} & \subset & \alpha_i^{\perp} & \subset & \CC^n & \supset & \alpha_{i+1} & \supset & \ul{0}
\end{diagram}

\begin{proof} Holomorphicity follows from Proposition \ref{pr:operators}(iii); the rest follows from \eqref{alpha_i} and Corollary \ref{co:factorizations}(i).
\end{proof}

\begin{lemma}\label{le:F} Let $W=\Phi\H_+:M\to\Gr_i$ and $W_{i-1}=\Phi_{i-1}\H_+:M\to\Gr_{i-1}$ be extended solutions
and let $\varphi_{i-1}$ be a harmonic map with associated extended solution $\Phi_{i-1}$. Suppose that $\Phi=\Phi_{i-1}(\pi_{\alpha}+\lambda\pi_{\alpha}^{\perp})$ for some uniton $\alpha$ for $\varphi_{i-1}$. Then 
\begin{itemize}
\item[(i)] $FW\subset\lambda W_{i-1}$ if and only if\/ $\alpha$ is a basic uniton for $\varphi_{i-1}\,;$
\item[(ii)] $FW_{i-1}\subset W$ if and only if\/ $\alpha$ is an antibasic uniton for $\varphi_{i-1}$.
\end{itemize}
\end{lemma}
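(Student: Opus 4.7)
The plan is to transport the operator $F$ under the bundle isomorphism $\Phi_{i-1}:\HH_+\to W_{i-1}$ and reduce both inclusions to pointwise algebraic conditions on $\alpha$. By Proposition~\ref{pr:operators}(iii), $\Phi_{i-1}$ intertwines $F$ on $\Gamma(W_{i-1})$ with the operator $\lambda D^{\varphi_{i-1}}_z - A^{\varphi_{i-1}}_z$ on $\Gamma(\HH_+)$. Meanwhile, from the $\lambda$-step relation $W=\Phi_{i-1}(\alpha)+\lambda W_{i-1}$ (Lemma~\ref{le:step}), the subbundles $W$ and $\lambda W_{i-1}$ of $W_{i-1}$ correspond under $\Phi_{i-1}$ to $\alpha+\lambda\HH_+$ and $\lambda\HH_+$ respectively. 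Both inclusions therefore become statements about $(\lambda D^{\varphi_{i-1}}_z-A^{\varphi_{i-1}}_z)$ acting on subbundles of $\HH_+$.

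For part~(i), the condition becomes $(\lambda D^{\varphi_{i-1}}_z-A^{\varphi_{i-1}}_z)(\alpha+\lambda\HH_+)\subset\lambda\HH_+$. The operator preserves $\lambda\HH_+$ automatically (a $\lambda$-factor is inherited by every term), so the content reduces to the action on $a\in\Gamma(\alpha)$: the term $\lambda D^{\varphi_{i-1}}_z a$ is always in $\lambda\HH_+$, while $A^{\varphi_{i-1}}_z a$ sits in the zeroth Fourier component $\ul{\cn^n}$ and so lies in $\lambda\HH_+$ only when it vanishes. This gives $\alpha\subset\ker A^{\varphi_{i-1}}_z$; combined with the holomorphicity of $\alpha$ already built into the uniton assumption, this is precisely the basic uniton condition.

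For part~(ii), the translated condition is $(\lambda D^{\varphi_{i-1}}_z-A^{\varphi_{i-1}}_z)(\HH_+)\subset\alpha+\lambda\HH_+$. Decomposing $\HH_+=\alpha\oplus\alpha^{\perp}\oplus\lambda\HH_+$, the $\lambda\HH_+$ summand again poses no obstruction, and the $\alpha$ summand is handled by the uniton property $A^{\varphi_{i-1}}_z(\alpha)\subset\alpha$; the essential case is $a^{\perp}\in\Gamma(\alpha^{\perp})$, where $A^{\varphi_{i-1}}_z a^{\perp}\in\ul{\cn^n}$ fits into $\alpha+\lambda\HH_+$ iff it actually lies in $\alpha$. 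The inclusion is thus equivalent to $A^{\varphi_{i-1}}_z(\alpha^{\perp})\subset\alpha$, which, together with $A^{\varphi_{i-1}}_z(\alpha)\subset\alpha$ and $\cn^n=\alpha\oplus\alpha^{\perp}$, upgrades to $\Ima A^{\varphi_{i-1}}_z\subset\alpha$, i.e., the antibasic uniton condition. The only mild obstacle is careful Fourier-mode bookkeeping to detect when a zeroth-mode vector is or is not absorbed into $\lambda\HH_+$ or $\alpha+\lambda\HH_+$; once the operator transport via Proposition~\ref{pr:operators} is invoked, each step is routine.
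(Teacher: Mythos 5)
Your proof is correct and follows essentially the same route as the paper: transport $F$ through $\Phi_{i-1}$ via Proposition~\ref{pr:operators}(iii), identify $\Phi_{i-1}^{-1}W=\alpha+\lambda\HH_+$, and read off that the two inclusions reduce to $A_z^{\varphi_{i-1}}(\alpha)=0$ and $\Ima A_z^{\varphi_{i-1}}\subset\alpha$ respectively. The paper states exactly these two equivalences with the Fourier-mode bookkeeping left implicit, so your write-up is just a more detailed version of the same argument.
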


\begin{proof} Part (i) follows from the fact that $\Phi_{i-1}^{-1}F\Phi\H_+\subset \lambda\HH_+$ if and only if $A_{z}^{\varphi_{i-1}}(\alpha)=0$, and (ii) from the fact that $\Phi_{i-1}^{-1}F\Phi_{i-1}\H_+\subset\alpha+\lambda\HH_+$ if and only if $\Ima A_{z}^{\varphi_{i-1}}\subset\alpha$. 
\end{proof}

The last result is illustrated by the following two commutative diagrams:
\begin{diagram}[height=4ex]
\Gamma(W) & \rTo^F & \Gamma(\lambda W_{i-1}) & & & \Gamma(W_{i-1}) & \rTo^F  & \Gamma(W)	
	\\
 \dTo & & \dTo & & &
	\dTo & & \dTo
	\\
\Gamma(\alpha) & \rTo_{A_z^{\varphi_{i-1}}} & \Gamma(\ul{0}) & & & \Gamma(\CC^n)  & \rTo_{A_z^{\varphi_{i-1}}} & \Gamma(\alpha)
	\end{diagram}
where the vertical arrows are given by $P_0 \circ (\Phi_{i-1})^{-1}$ and are surjective.

\begin{proposition}\label{pr:F-Seg-Uhl} Let $W=\Phi\H_+:M\to\Gr_{i}$ be an extended solution. Define $W_{i-1}^S = \Phi^S_{i-1}\H_+$ and $W_{i-1}^U=\Phi^U_{i-1}\H_+$ by the Segal and Uhlenbeck steps: $W_{i-1}^S=W+\lambda^{i-1}\HH_+$ and $W_{i-1}^U=(\lambda^{-1}W)\cap\HH_+\,,$ respectively, so that  $\Phi = \Phi^S_{i-1}(\pi_{\beta}+\lambda\pi_{\beta}^{\perp})=\Phi^U_{i-1}(\pi_{\gamma}+\lambda\pi_{\gamma}^{\perp})$ for some unitons $\beta$ and $\gamma$.  Then $\beta$ is antibasic and $\gamma$ is basic.  
\end{proposition}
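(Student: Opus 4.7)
The plan is to verify the two inclusions required by Lemma \ref{le:F}: for the Segal step I will check $FW^S_{i-1}\subset W$, which by Lemma \ref{le:F}(ii) gives that $\beta$ is antibasic; for the Uhlenbeck step I will check $FW\subset\lambda W^U_{i-1}$, which by Lemma \ref{le:F}(i) gives that $\gamma$ is basic. In both cases the inclusion will follow directly from the definition of the relevant step together with the two defining properties of $W$ as an extended solution in $\Gr_i$: closure under $F=\lambda\pa_z$ (from \eqref{W-ext-sol}(b)), and $\lambda^i\HH_+\subset W\subset\HH_+$.

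For the Segal case, I would write $W^S_{i-1}=W+\lambda^{i-1}\HH_+$ and split $FW^S_{i-1}\subset W$ into two pieces. The piece $FW\subset W$ is precisely \eqref{W-ext-sol}(b). For $F(\lambda^{i-1}\HH_+)\subset W$, every section of the trivial subbundle $\lambda^{i-1}\HH_+\subset\HH$ has the form $\lambda^{i-1}\tau$ for a smooth $\tau:M\to\H_+$, and then $F(\lambda^{i-1}\tau)=\lambda^i\pa_z\tau\in\Gamma(\lambda^i\HH_+)\subset\Gamma(W)$, the last inclusion because $W\in\Gr_i$. Hence $\beta$ is antibasic.

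For the Uhlenbeck case, given $\sigma\in\Gamma(W)$, I would set $\tau=\pa_z\sigma$. Since $W\subset\HH_+$ and both are trivial subbundles of $\HH$, $\tau$ is again a section of $\HH_+$. On the other hand, $\lambda\tau=F\sigma\in\Gamma(W)$ by \eqref{W-ext-sol}(b), so $\tau$ is a section of $\lambda^{-1}W$. Therefore $\tau\in\Gamma\bigl((\lambda^{-1}W)\cap\HH_+\bigr)=\Gamma(W^U_{i-1})$, which yields $F\sigma=\lambda\tau\in\lambda W^U_{i-1}$, as required. Hence $\gamma$ is basic.

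No real obstacle is expected: everything reduces to a bookkeeping check that $F$ sends the constant subbundle $\lambda^{i-1}\HH_+$ into $\lambda^i\HH_+\subset W$, and that the closure of $W$ under $F$ immediately gives $\pa_z\sigma\in\lambda^{-1}W$ for $\sigma\in\Gamma(W)$. The only mildly delicate point is remembering that $\pa_z$ preserves the trivial subbundles $\lambda^k\HH_+$ of $\HH$, so that the inclusions above can be read off directly without computing any uniton.
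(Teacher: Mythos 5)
Your proof is correct and follows essentially the same route as the paper: both arguments reduce to the two inclusions $FW^S_{i-1}\subset W+\lambda^i\HH_+=W$ and $FW\subset W\cap\lambda\HH_+=\lambda W^U_{i-1}$, and then invoke Lemma \ref{le:F}. Your version merely spells out at the level of sections what the paper states as inclusions of subbundles.
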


\begin{proof} We have 
$FW\subset W\cap\lambda \HH_+=\lambda (\lambda^{-1}W\cap \HH_+)=\lambda W_{i-1}^U$, and 
$F W_{i-1}^S \subset W+\lambda^i\HH_+=W$. The proposition now follows from Lemma \ref{le:F}. 
\end{proof}
	
\subsection{$S^1$-invariant harmonic maps and superhorizontal sequences}\label{subsec:s1-invariant} 

An important special case of the above constructions is when the unitons are \emph{nested}.  We saw in \S   \ref{subsec:invt} that, algebraically, this corresponds to maps $\Phi$ invariant under the $S^1$-action.   In fact, the sequence of Segal unitons has the following property. 

\begin{definition} \label{def:superhor}
Let $\ul{0} = \delta_0 \subset \delta_1 \subset \cdots \subset \delta_r \subset \delta_{r+1} = \CC^n$ be a nested sequence of subbundles of a trivial bundle $\CC^n = M \times \cn^n$.  Say that the sequence is \emph{superhorizontal} if
\begin{enumerate}
\item[(i)] each subbundle is holomorphic with respect to the standard complex structure, i.e., $\pa_{\zbar} \sigma \in \Gamma(\delta_i)$ for all $i$ and $\sigma \in \Gamma(\delta_i)$;

\item[(ii)] the operator $\pa_z$ maps smooth sections of $\delta_i$ into sections of $\delta_{i+1}$, i.e., $\pa_z \sigma \in \Gamma(\delta_{i+1})$ for all $i$ and $\sigma \in \Gamma(\delta_i)$.
\end{enumerate}
\end{definition} 
A superhorizontal sequence is equivalent to a superhorizontal holomorphic map from $M$ to a flag manifold of $\U n$, see
 \cite[Chapter 4]{burstall-rawnsley}.
 Write $\zeta_i = \delta_{i}^{\perp} \cap \delta_{i+1}$ \ $(i= 0,\ldots, r)$, so that the $\zeta_i$ are orthogonal and have sum $\CC^n$.
 The next result follows from Proposition \ref{pr:alg-iso} (ii) and (iii) and is essentially known.

\begin{proposition} \label{pr:S1-invt}
Let $\Phi:M \to\Omega_r \U n$ be an $S^1$-invariant
 polynomial extended solution; and write $\varphi = \Phi_{-1}$.   Let $\delta_1,\dots,\delta_r$ be the corresponding Segal unitons. Then
\begin{enumerate}
\item[(i)]  the sequence
$\ul{0} = \delta_0 \subset \delta_1 \subset \cdots \subset \delta_r \subset \delta_{r+1} = \CC^n$
is superhorizontal\/$;$

\item[(ii)] the Uhlenbeck unitons satisfy $\gamma_i=\delta_{r+1-i}\,;$

\item[(iii)]  $\varphi$ maps into a Grassmannian, and is given by 
\end{enumerate}
\begin{equation} \label{phi-nested}
\varphi = \sum_{k=0}^{[r/2]}\zeta_{2k}=\zeta_0\oplus\zeta_2\oplus\dots.
\end{equation} \\[-5ex]
\qed
\end{proposition}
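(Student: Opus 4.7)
I shall handle the three assertions in order of increasing effort, relying on the equivalences in Proposition \ref{pr:alg-iso}. Assertion (ii) is immediate from the last clause of that proposition, $\gamma_i = \beta_{r-i+1}$, which in the present notation reads $\gamma_i = \delta_{r+1-i}$.

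For (i), the inclusions $\delta_1 \subset \cdots \subset \delta_r$ come from Proposition \ref{pr:alg-iso}(i), so the substantive content is to verify the two clauses of Definition \ref{def:superhor}. Here I would exploit the explicit form $W = \sum_{i=0}^{r-1}\lambda^i \delta_{i+1} + \lambda^r\HH_+$ supplied by Proposition \ref{pr:alg-iso}(iii), from which one reads off both $P_iW = \delta_{i+1}$ (for $0 \leq i \leq r-1$, and $P_rW = \CC^n = \delta_{r+1}$) and the inclusion $\lambda^{i-1}\delta_i \subset W$. Given a local section $\sigma \in \Gamma(\delta_i)$, the plan is to lift it to $\lambda^{i-1}\sigma \in \Gamma(W)$, apply the operators $\pa_{\zbar}$ and $F = \lambda\pa_z$ (which preserve $\Gamma(W)$ by conditions \eqref{W-ext-sol}), and then project back by the appropriate $P_k$. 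This yields $\lambda^{i-1}\pa_{\zbar}\sigma \in \Gamma(W)$, whose $P_{i-1}$-component is $\pa_{\zbar}\sigma \in \Gamma(\delta_i)$, and $\lambda^i\pa_z\sigma \in \Gamma(W)$, whose $P_i$-component is $\pa_z\sigma \in \Gamma(\delta_{i+1})$: precisely the two clauses of superhorizontality.

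For (iii), I would substitute $\lambda = -1$ in the Segal factorization \eqref{Seg-Uhl-fact} to obtain $\varphi = \Phi_{-1} = \prod_{i=1}^r (\pi_{\delta_i} - \pi_{\delta_i}^{\perp})$. The nested flag yields the orthogonal decomposition $\cn^n = \bigoplus_{j=0}^r \zeta_j$ with $\delta_i = \bigoplus_{j=0}^{i-1}\zeta_j$, so the $i$-th factor acts on $\zeta_j$ as $+I$ when $i > j$ and as $-I$ when $i \leq j$. Exactly $j$ of the $r$ factors contribute $-I$, giving $\varphi|_{\zeta_j} = (-1)^j I$, hence $\varphi = \sum_{j=0}^r (-1)^j \pi_{\zeta_j}$, which is the Cartan image $\iota(V)$ of the Grassmannian $V = \bigoplus_{k=0}^{[r/2]}\zeta_{2k}$. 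I foresee no serious obstacle; the only minor subtlety is the identification $P_iW = \delta_{i+1}$ and the lift $\sigma \mapsto \lambda^{i-1}\sigma$ in part (i), but both follow transparently from the closed form of $W$ provided by Proposition \ref{pr:alg-iso}(iii).
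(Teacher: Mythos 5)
Your proof is correct and follows exactly the route the paper indicates: the paper gives no written proof, stating only that the result ``follows from Proposition \ref{pr:alg-iso} (ii) and (iii) and is essentially known,'' and your argument supplies precisely those missing details --- reading $P_iW=\delta_{i+1}$ off the closed form of $W$, lifting sections via $\sigma\mapsto\lambda^{i-1}\sigma$ and using conditions \eqref{W-ext-sol} for superhorizontality, and evaluating the Segal factorization at $\lambda=-1$ on the $\zeta_j$ for \eqref{phi-nested}. All three steps check out, including the count that exactly $j$ of the $r$ factors act as $-I$ on $\zeta_j$.
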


\begin{example} \label{ex:superhor}
(i) Any harmonic map $\varphi$ of uniton number $r=2$ to a complex Grassmannian is $S^1$-invariant, it suffices to observe that $P_1W = P_1(W \cap \lambda\HH_+)$\,.   In fact,
$\varphi = \delta_1 \oplus \delta_2^{\perp}$ is a \emph{mixed pair} in the sense of \cite[\S 3.4]{burstall-wood}, and
$\varphi^{\perp} = \delta_1^{\perp} \cap \delta_2$ is \emph{strongly isotropic} in the sense of \cite{erdem-wood}.  All harmonic maps from $S^2$ to  $\cn P^{n-1}$ are strongly isotropic.
In general, \eqref{phi-nested} expresses $\varphi$ or its orthogonal complement  as the sum of strongly isotropic harmonic maps $\zeta_i$\,.

(ii) Let $f:M \to \cn P^{n-1}$ be a full holomorphic map, then setting $\delta_i = f_{(i-1)}$ gives a superhorizontal sequence with $\rank(\delta_i) = i$.
\end{example}
 
\begin{example}  \label{ex:S1-limit}
Let $\Phi:M\to\Omega_r\U n$ be an extended solution and write $W=\Phi\H_+$\,.
Set 
$  
\delta_{i+1}=P_i(W\cap\lambda^i\HH_+)=P_0(\lambda^{-i}W\cap\HH_+)\quad(i=0,\dots, r)\,,
$
 so that $\delta_i \subset \delta_{i+1}$.
Then the sequence $(\delta_i)$  is superhorizontal.
Set $W^0 = \sum_{i=0}^{r-1}\lambda^i \delta_{i+1}+\lambda^r\HH_+$ and 
define $\Phi^0:M \to \Omega_r\U n$ by $W^0 = \Phi^0 \HH_+$\,.
Then $\Phi^0$ is an
$S^1$-invariant extended solution with Segal unitons $\delta_i$\,.
 
Now there is an action of $\cn \setminus \{0\}$ on extended solutions $W = \Phi\H_+$ which we write as $\Phi \mapsto s \cdot \Phi$ or $W \mapsto s \cdot W$
\ $(s \in \cn \setminus \{0\})$
given by
$(s\cdot W)_{\lambda} = W_{\lambda s}$; when $s \in S^1$, this is the $S^1$-action of \S \ref{subsec:invt}.
As $s \to 0$, $W$ tends to $W^0$;
thus, \emph{for any extended solution $W=\Phi\H_+$\,, $s \mapsto s \cdot \Phi$ \ $(s \in (0,1])$ gives a deformation of\/ $\Phi$ through extended solutions; as $s \to 0$, this tends
to the $S^1$-invariant extended solution $\Phi^0$}; we shall call $W^0$ and $\Phi^0$ the \emph{$S^1$-invariant limit} of
$W$ and $\Phi$.
For the action of $\cn \setminus \{0\}$ on the loop group of an arbitrary compact Lie group, see
\cite[\S 2]{burstall-guest}. 
\end{example}

\subsection{Normalized extended solutions} \label{subsec:normalized}

In the sequel, for subspaces $A, B$ of an inner product space with $B \subset A$, we write $A \ominus B$ for
$A \cap B^{\perp}$. Note that $A \ominus B$ can be canonically identified with the quotient space $A/B$.
 
Let $\Phi:M \to \Omega_r\U n$ be an extended solution and set
$W=\Phi\H_+:M\to\Gr_r$.  The $L^2$ inner product on $\HH_+$ restricts to ones on $W$ and $W \ominus \lambda W$. On giving
$W/\lambda W$ the quotient inner product, the natural isomorphism $W \ominus \lambda W \to W/\lambda W$ is an isometry.

Consider the filtration 
\begin{equation*}
W\supset W\cap\lambda \HH_+\supset W\cap\lambda^2\HH_+\supset\cdots\supset W\cap\lambda^r\HH_+
	\supset W\cap\lambda^{r+1}\HH_+ \,.
\end{equation*}
On applying the natural projection $\pi:W \to W/\lambda W$, this induces a filtration:
$W\big/\lambda W=\wh Y_0\supset\wh Y_1\supset\cdots\supset\wh Y_r\supset\wh Y_{r+1}=0$
where
$
\wh Y_i= \pi(W\cap\lambda^i\HH_+) = (W\cap\lambda^i\HH_++\lambda W) \big/(\lambda W)
	\cong
	(W\cap\lambda^i \HH_+) \big/ (\lambda W\cap\lambda^i\HH_+),
$
or, equivalently, an orthogonal decomposition:
\begin{equation} \label{Ai-decomp}
W\ominus\lambda W \cong W/\lambda W = A_0\oplus A_1\oplus\cdots\oplus A_r
\end{equation}
where 
$
A_i= \wh Y_i \ominus \wh Y_{i+1} \cong (W\cap\lambda^i\HH_+) \big/
(\lambda W\cap\lambda^i\HH_++W\cap\lambda^{i+1}\HH_+).
$

Note that $A_i \cong \delta_{i+1}/\delta_i$;
indeed, the composition of natural projections
$P_i: W\cap \lambda^i\HH_+ \to P_i(W \cap \lambda^i\HH_+)
 \to P_i(W \cap \lambda^i\HH_+) \big/P_{i-1}(W \cap \lambda^{i-1}\HH_+) = \delta_{i+1}/\delta_i$
is surjective and has kernel $\lambda W\cap\lambda^i\HH_+ +W\cap\lambda^{i+1}\HH_+$\,.
In particular,
\begin{equation} \label{sum-rank}
\sum_{i=0}^r \rank A_i = n
\end{equation} 

Recall that, if $W$ is extended solution, then for any loop
$\gamma \in \Omega\U n$, then $\gamma W$ is another extended solution,  which is \emph{equivalent} to $W$ in the sense that it is  associated to the same harmonic maps.   We next discuss when this change can be used to reduce the degree of $W$.

\begin{lemma} \label{le:reduce}
Let $W:M \to \Gr_r$ be an extended solution.
\begin{enumerate}
\item[(i)] Suppose that $\delta_i$ is constant for some
$i$.  Define $\eta\in\Omega_1\U n$ by
$\eta = \pi_{\delta_i} + \lambda \pi_{\delta_i}^{\perp}$.
Then $\eta^{-1}W :M \to \Gr_{r-1}$\,.

\item[(ii)] If $A_i = 0$, equivalently, the Segal unitons $\beta_i$ of\/ $W$ satisfy
$\rank\beta_i = \rank\beta_{i+1}$,  then $\delta_i$ is constant.
\end{enumerate}
\end{lemma}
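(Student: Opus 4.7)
For part (i), since $\delta_i$ is constant, $\eta$ is a single loop ($p$-independent) that commutes with scalar multiplication by $\lambda$, so $\eta^{-1}W$ is automatically $\lambda$-closed. It remains to check $\lambda^{r-1}\HH_+\subset\eta^{-1}W\subset\HH_+$. Writing $\eta^{-1}=\pi_{\delta_i}+\lambda^{-1}\pi_{\delta_i}^{\perp}$, the right inclusion reduces to $\pi_{\delta_i}^{\perp}(P_0 W)=0$, which holds since $P_0 W=\delta_1\subset\delta_i$ by nesting of the $\delta_j$. The left inclusion is equivalent to $\eta(\lambda^{r-1}\HH_+)=\lambda^{r-1}\delta_i+\lambda^r\HH_+\subset W$; the second summand is contained in $\lambda^r\HH_+\subset W$, and the first follows from $\delta_i\subset\delta_r$ together with the one-line observation that any $v\in\delta_r$ can be written as $P_{r-1}w$ with $w\in W\cap\lambda^{r-1}\HH_+$, so $\lambda^{r-1}v=w-\lambda^r w'\in W$.

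For the \emph{equivalence} in (ii), I would compute $\rank(W^S_{i-1}/W^S_i)$ in two ways: from $W^S_j=W+\lambda^j\HH_+$ one finds $W^S_{i-1}/W^S_i\cong\cn^n/\delta_i$, while from $W^S_i=\Phi^S_{i-1}(\beta_i+\lambda\HH_+)$ one finds $W^S_{i-1}/W^S_i\cong\cn^n/\beta_i$. Hence $\rank\beta_i=\rank\delta_i$ for every $i$, and since $A_i\cong\delta_{i+1}/\delta_i$ with $\delta_i\subset\delta_{i+1}$, we conclude that $A_i=0$ is equivalent to $\rank\beta_i=\rank\beta_{i+1}$.

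To prove that $\delta_i$ is constant under the hypothesis $A_i=0$ (i.e., $\delta_i=\delta_{i+1}$), I would exploit the extended-solution structure. Set $V_j=\lambda^{-j}W\cap\HH_+$, so that $\delta_{j+1}=P_0 V_j$, and given a local section $v\in\Gamma(\delta_i)$ lift it to some $w\in\Gamma(V_{i-1})$. Since $\lambda^{i-1}w\in\Gamma(W)$, applying the extended-solution conditions \eqref{W-ext-sol} to $\lambda^{i-1}w$ gives $\lambda^{i-1}\pa_{\zbar}w\in\Gamma(W)$ and $\lambda^i\pa_z w\in\Gamma(W)$; combined with $\pa_{\zbar}w,\pa_z w\in\Gamma(\HH_+)$, this forces $\pa_{\zbar}w\in\Gamma(V_{i-1})$ and $\pa_z w\in\Gamma(V_i)$. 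Applying $P_0$ (which commutes with $\pa_z$ and $\pa_{\zbar}$) then yields $\pa_{\zbar}v\in\Gamma(\delta_i)$ and $\pa_z v\in\Gamma(\delta_{i+1})=\Gamma(\delta_i)$. Hence $\delta_i$ is preserved by the flat connection $\d$, and is therefore constant on the connected surface $M$.

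The main subtlety I foresee is that the bundle identities $\rank\beta_i=\rank\delta_i$ and $A_i\cong\delta_{i+1}/\delta_i$ tacitly assume $\delta_i$ is a genuine subbundle; since the rank of $\delta_i$ can drop at isolated points, I would invoke the filling-out-zeros convention (cf.\ \cite[Proposition 2.2]{burstall-wood}) already in force, under which $\delta_i$ extends to a holomorphic subbundle of constant rank, and the arguments then go through uniformly across $M$.
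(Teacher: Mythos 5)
Your proof is correct and follows essentially the same route as the paper's: part (i) is the same two-inclusion check (the paper verifies $P_{-1}\wt W=0$ and $P_{r-1}\wt W=\cn^n$, which amounts to your computation of $\eta(\lambda^{r-1}\HH_+)$), and part (ii) rests, as in the paper, on $A_i\cong\delta_{i+1}/\delta_i$, $\rank\beta_i=\rank\delta_i$, and the superhorizontality of the sequence $(\delta_i)$. The only difference is that you supply proofs of the latter two facts, which the paper asserts without proof (in Example \ref{ex:S1-limit} and in the final line of its own argument).
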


\begin{proof} 
(i) Write $\wt{W} = \eta^{-1}W$; we must show that $\lambda^{r-1}\HH_+ \subset \wt{W} \subset \HH_+$\,.
Now 
$P_{-1}\wt{W} = \pi_{\delta_i^{\perp}}P_0W = \pi_{\delta_i^{\perp}}\delta_1 = 0$ since $\delta_1 \subset \delta_i$, hence
$\wt{W} \subset \HH_+$\,.

Further, since $W \supset \lambda^{r-1}\delta_r + \lambda^r\HH_+$ and
 $\delta_i \subset \delta_{r}$, we have
$$
P_{r-1}\wt{W} \supset
P_{r-1}\bigl((\pi_{\delta_{i}} + \lambda^{-1}\pi_{\delta_i}^{\perp}) (\lambda^{r-1}\delta_r + \lambda^r\HH_+)\bigr) = \delta_i + \delta_i^{\perp} 
= \cn^n,
$$
hence $\lambda^{r-1}\HH_+ \subset \wt{W}$.

(ii) $A_i = 0$ is equivalent to
$\delta_i = \delta_{i+1}$.  This implies that $\delta_i$ is $\pa_z$- and $\pa_{\zbar}$-closed, so constant.
Lastly, note that $\rank\delta_i = \rank\beta_i$\,.
\end{proof}

Let $W = \Phi\H_+$.  We say that $W$ (and $\Phi$) are \emph{normalized} if
$A_i\neq 0$ for all $i=0,\dots,r$,
cf.\ \cite[Theorem 4.5]{burstall-guest}.
By iterating the above reduction process, we can normalize a given extended solution, as follows, with (ii) following from \eqref{sum-rank}.

\begin{proposition}\label{pr:normalized}
{\rm (i)} Given an extended solution $W:M \to \Gr_r$, there exists an integer $s$ with $0\leq s\leq r$ and  $\gamma \in\Omega_{r-s}\U n$ such that $\gamma^{-1}W:M \to \Gr_s$ is normalized with no $\delta_i$ constant.

{\rm (ii)} Any normalized solution $\wt{W}: M \to \Gr_s$ has $s\leq n-1$. 
\qed \end{proposition}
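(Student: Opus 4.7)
For part~(i), the plan is to iterate Lemma~\ref{le:reduce}(i). Starting with $W^{(0)}=W$ of degree $r_0=r$, at each step $k$ I examine the Segal subbundles $\delta^{(k)}_i$ of $W^{(k)}$. If some $\delta^{(k)}_i$ with $1\leq i\leq r_k$ is a constant subspace of $\cn^n$, I set $\eta_k=\pi_{\delta^{(k)}_i}+\lambda\pi_{\delta^{(k)}_i}^{\perp}\in\Omega_1\U n$ and replace $W^{(k)}$ by $W^{(k+1)}=\eta_k^{-1}W^{(k)}$; Lemma~\ref{le:reduce}(i) places this in $\Gr_{r_k-1}$, and left-multiplication by the $z$-independent loop $\eta_k^{-1}$ preserves the extended-solution equation $\Phi^{-1}\Phi_z=(1-\lambda^{-1})A$. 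Since the degree drops by one at each step, the procedure terminates after at most $r$ iterations at some $\wt W=W^{(r-s)}\in\Gr_s$ with no constant $\delta_i$. Setting $\gamma=\eta_0\eta_1\cdots\eta_{r-s-1}\in\Omega_{r-s}\U n$ then gives $\gamma^{-1}W=\wt W$.

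It then remains to verify that $\wt W$ is normalized, i.e.\ that $A_i\neq 0$ for all $i=0,\dots,s$. For the interior indices $1\leq i\leq s$ this is the contrapositive of Lemma~\ref{le:reduce}(ii): non-constancy of $\delta_i$ forces $A_i\neq 0$. For the boundary indices, I use $A_0\cong\delta_1$ and $A_s\cong\cn^n/\delta_s$, so vanishing of either would force $\delta_1=\ul{0}$ or $\delta_s=\CC^n$, both constant subbundles, contradicting the stopping criterion.

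Part~(ii) is immediate from \eqref{sum-rank}: normalization gives $\rank A_i\geq 1$ for every $i=0,\dots,s$, whence $n=\sum_{i=0}^s\rank A_i\geq s+1$ and so $s\leq n-1$.

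The only subtle point is confirming that each $\eta_k$ is genuinely an element of $\Omega_1\U n$ and not merely a map $M\to\Omega\U n$; but this is exactly why the reduction is triggered by constancy (in $z$) of the chosen $\delta^{(k)}_i$, ensuring that $\pi_{\delta^{(k)}_i}$ is a constant projection. Consequently $\gamma=\eta_0\cdots\eta_{r-s-1}$ lies in $\Omega_{r-s}\U n$ as required, and no deeper obstacle arises.
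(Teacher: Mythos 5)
Your proposal is correct and follows exactly the route the paper intends: the paper's entire justification is the sentence ``By iterating the above reduction process, we can normalize a given extended solution \dots with (ii) following from \eqref{sum-rank}'', i.e.\ iterate Lemma~\ref{le:reduce} and count ranks. You have merely filled in the (routine) details — termination, the fact that constant loops preserve the extended-solution condition, the contrapositive of Lemma~\ref{le:reduce}(ii) for the interior indices and the identifications $A_0\cong\delta_1$, $A_s\cong\CC^n/\delta_s$ for the boundary ones — all of which are accurate.
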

 
\begin{remark}
Let $W \in Gr_r$ $(r >0)$ be an extended solution with Segal unitons $\beta_i$\,.   Suppose that (i) $\beta_1$ is full and $\beta_r \neq \CC^n$.  Then the ranks of the $\beta_i$ are strictly increasing (equivalently,
the ranks of the Uhlenbeck unitons are strictly decreasing), otherwise some $\delta_i$ would be constant and $\beta_1 =\delta_1$ would be contained in that $\delta_i$ contradicting fullness.
Hence $W$ is normalized.   

In particular, a polynomial extended solution $\Phi:M\to\Omega_r\U n$ is said to be of \emph{type one} if $\delta_1$ is full; if the degree of $\Phi$ is exactly $r$ then $\beta_r \neq \CC^n$. 
Uhlenbeck proved \cite{uhlenbeck} that any harmonic map $\varphi:M\to\U n$ of finite uniton number has a \emph{unique} type one associated polynomial extended solution $\Phi$, and its degree equals the minimal uniton number of $\varphi$.  The conditions (i) above are met so that the corresponding $W = \Phi\H_+$ is normalized.  
\end{remark}

\section{Explicit formulae for harmonic maps and the Iwasawa decomposition} \label{sec:expl}

\subsection{Explicit formulae for harmonic maps}
\label{subsec:expl} 

As in \cite{ferreira-simoes-wood}, we describe holomorphic subbundles of the trivial bundle
$\CC^N = (M \times \cn^n, \pa_{\zbar})$ by meromorphic functions as follows.
By a \emph{meromorphic spanning set} of a holomorphic bundle
 $E$ of rank $k$ we mean a collection $\{L^j\}$ of
 meromorphic sections of $E$ which spans the fibres of $E$
 except on a discrete set.  If, further, the set $\{L^j\}$ is linearly
 independent except on a (possibly bigger) discrete set, then as in
\cite[\S 7]{dai-terng} we call it a \emph{meromorphic frame} for $E$.
The following is well known; for clarity we give a proof.

\begin{lemma}  {\rm (i)} Any holomorphic subbundle $E$ of the trivial holomorphic bundle $\CC^N$ has a meromorphic frame
$\{L_j:j=1,\ldots,k\}$ with $k = \rank(E)$.

{\rm (ii)} Given any finite collection $\{L_j\}$ of meromorphic sections of\/ $\CC^N$, there is a unique holomorphic subbundle with meromorphic spanning set $\{L_j\}$, which we denote by $\spa\{L_j\}$.
\end{lemma}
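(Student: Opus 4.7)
\emph{Proof plan.}

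For part (i), the plan is to exploit the fact that $E$ is a rank-$k$ holomorphic subbundle of the trivial bundle $\CC^N$ and thus arises as the graph of a linear map on coordinate subspaces over a suitable open set. Since $E$ has rank $k$, for some $k$-element subset $I\subset\{1,\dots,N\}$ the restriction to $E$ of the coordinate projection $\pi_I:\cn^N\to\cn^k$ is an isomorphism of fibres on a non-empty open set $M_I\subset M$. I would define the sections $L_j^I=(\pi_I|_E)^{-1}(e_j)$ on $M_I$ and show they extend to meromorphic sections of $E$ on all of $M$. Locally, picking any holomorphic frame $t_1,\dots,t_k$ of $E$ and writing it as the columns of an $N\times k$ matrix $A$ of holomorphic functions, Cramer's rule gives
\[
L_j^I=\sum_{\ell=1}^k \bigl[(A_I)^{-1}\bigr]_{\ell j}\, t_\ell,
\]
where $A_I$ is the $k\times k$ submatrix of rows indexed by $I$; the entries of $(A_I)^{-1}$ are $\det(A_I)^{-1}$ times polynomials in holomorphic functions, hence meromorphic. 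The collection $\{L_j^I\}_{j=1}^{k}$ is thus a meromorphic frame, being linearly independent away from the discrete zero set of $\det\pi_I|_E$.

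For part (ii), I would first define the would-be bundle on the open dense subset $U\subset M$ obtained by removing all poles of the $L_j$ and all points where $\dim\spa\{L_j(p)\}$ is strictly less than its generic (maximal) value $k$. On $U$, $E^\circ:=\spa\{L_j\}$ is a rank-$k$ holomorphic subbundle of $\CC^N$. The substantive step is to extend $E^\circ$ holomorphically across each of the finitely many bad points $p_0\in M\setminus U$. Around such a point with local coordinate $z$, I would select $k$ of the $L_j$ (say $L_{j_1},\dots,L_{j_k}$) which frame $E^\circ$ on a punctured neighbourhood of $p_0$, multiply each by an appropriate power $z^{-N_i}$ so that the resulting sections are holomorphic at $p_0$ and still span a $k$-dimensional subspace there, and declare this span to be the fibre of $E$ at $p_0$. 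This is precisely the \emph{filling out zeros} procedure of \cite[Proposition 2.2]{burstall-wood}, which guarantees that the extension is a well-defined holomorphic subbundle of $\CC^N$.

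Uniqueness is then immediate: any two holomorphic subbundles of $\CC^N$ containing $\{L_j\}$ as a meromorphic spanning set coincide on $U$, hence on all of $M$ by continuity of the induced map into the Grassmannian $G_k(\cn^N)$. The main obstacle is confirming that the filling-out construction in (ii) is intrinsic, i.e., independent of the choice of sub-frame $(L_{j_1},\dots,L_{j_k})$ and of the exponents $N_i$. Any alternative choice differs from the first by multiplication by a meromorphic $k\times k$ matrix; once both frames are regularized at $p_0$, the transition becomes holomorphic and invertible, so both local prescriptions produce the same $k$-plane at $p_0$. Invoking the cited Burstall--Wood result handles this efficiently.
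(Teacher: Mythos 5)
Your proposal is correct and follows essentially the same route as the paper: your coordinate projections $\pi_I$ and Cramer's rule computation are exactly the paper's standard affine charts of $G_k(\cn^N)$ with rational transition functions, and your part (ii) is the same filling-out-zeros argument (citing the same Burstall--Wood proposition), merely written out in more detail than the paper's brief sketch.
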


\begin{proof} (i) Recall that $E$ defines a holomorphic map $E:M \to G_k(\cn^N)$.  The Grassmannian $G_k(\cn^N)$
has standard chart
$U \to \cn^{k(N-k)} = M_{k,N-k}(\cn)$ given by mapping the
row span of the $k \times N$ matrix $\wt{A} = \bigl[I_k | A \bigr]$ to $A$; here $I_k$ denotes the $k \times k$ identity matrix.  Permuting the columns gives the standard atlas of charts with rational transition functions. Choose a chart whose domain intersects the image of $E$ ; without loss of generality, we may assume that this is the standard chart above.
Now, the composition of this chart with the map $E$ is holomorphic except on  the discrete set $D = E^{-1}\bigl(G_k(\cn^N) \setminus U \bigr)$.  Set the $L_j$ equal to the rows of $\wt{A} \circ E$.   Since the transition functions are rational, the $L_j$ give a meromorphic frame for $E$. 

(ii) Away from the discrete set where one or more  of the $L_j$ has a pole, or the rank of the span of the $L_j$ is less than its maximal value, we  obtain a holomorphic bundle; it is clear that we can  fill in holes to extend it to the whole of $M$. 
\end{proof} 

As before, let $G_*(\cn^N)$ denote the Grassmannian of all subspaces of $\cn^N$.

\begin{definition} \label{def:assoc-curve}
Let $X$ be a holomorphic map from $M$ to $G_*(\cn^N)$, equivalently, a holomorphic subbundle
of $\CC^N$.  For $i \in \nn$, the
\emph{$i$'th osculating subbundle of\/ $X$} is
the subbundle $X_{(i)}$ of $\CC^N$ spanned by the local holomorphic sections $L$  of $X$ and their derivatives $L, L^{(1)}, \ldots, L^{(i)}$ with respect to any complex coordinate on $M$ of order up to $i$.  The corresponding map
$X_{(i)}:M \to G_*(\HH_+/\lambda^r \HH_+) \cong G_*(\cn^{rn})$ is called the
the \emph{$i$'th associated curve of\/ $X$}.
\end{definition}

Let  $W:M \to \Gr_r$  be an extended solution; thus $W=\Phi\H_+$ for some extended solution $\Phi:M \to \Omega_r\U n$.
Guest \cite{guest-update} noted that all such $W$ are given by taking an arbitrary holomorphic map
$X:M \to G_*(\H_+/\lambda^r \H_+) \cong G_*(\cn^{rn})$,
equivalently, holomorphic subbundle $X$ of
$(\CC^{rn},\pa_{\zbar})$, and setting $W$ equal to the coset
\begin{equation} \label{W}
W = X + \lambda X_{(1)} + \lambda^2 X_{(2)} + \cdots + \lambda^{r-1} X_{(r-1)}
	+ \lambda^r \HH_+ 
\end{equation}
We shall say that \emph{$X$ generates $W$}.

To find our explicit formulae, choose a meromorphic spanning set $\{L^j\}$ for the subbundle $X$ of $\CC^{nr}$. Then,
since $\lambda^{r+1} \HH_+ \subset \lambda W$,
the set $\{\lambda^k (L^j)^{\! (k)}\ :\ 0 \leq k \leq r \}$ gives a \emph{meromorphic spanning set for $W \mod \lambda W$}, by which we mean that the $\lambda^k (L^j)^{\! (k)}$ are meromorphic sections of $W$ whose cosets span $W\big/\lambda W$.

{}From Corollary \ref{co:explicit}, given a $\lambda$-filtration $(W_i)$ of $W$, the unitons are given explicitly by \eqref{alpha_i}, or equivalently \eqref{alpha-and-perp}, furthermore by Corollary \ref{co:hol-ker}, given a meromorphic spanning set of each $W_i$\,, \eqref{alpha_i} and \eqref{alpha-and-perp} give meromorphic spanning sets for each $\alpha_i$\,.
If we now specify how to get a meromorphic spanning set for $W_i$ from one for $W$, this leads to explicit formulae for the unitons, and so for the extended solution $\Phi$ .  Since
$P_0\Phi_{i-1}^{-1}(\lambda W_{i-1}) = 0$, it suffices to know a spanning set for $W_i \mod \lambda W_{i-1}$.
Note that $\lambda W_{i-1} \supset \lambda^i\HH_+$\,.

We first see how this works for the Segal filtration.

\begin{example} \label{ex:Seg-expl}
Let $W:M \to \Gr_r$ be an extended solution. For the Segal filtration \eqref{Seg-filt}, formulae \eqref{alpha_i} and \eqref{alpha-and-perp} simplify to the following formulae for the Segal unitons:
\begin{equation} \label{beta-W}
\beta_i=P_0\Phi_{i-1}^{-1}W_i^S = P_0\Phi_{i-1}^{-1}(W + \lambda^i\HH_+)
= P_0\Phi_{i-1}^{-1}W = \sum_{s=0}^{i-1}S_s^{i-1}P_s W.
\end{equation}

More explicitly, let $X$ be a holomorphic bundle generating $W$ as in \eqref{W}. Choose a meromorphic spanning set $\{L^j\}$ of $X$. 
Then, from the above,
$\{\lambda^k (L^j)^{\! (k)}\ :\ 0 \leq k \leq r-1 \}$ (note the $r-1$) gives a meromorphic spanning set of\/
$W \mod (\lambda W + \lambda^r\HH_+)$\,.
  For every $i,j$, write $P_i L^j=L^j_i$ so that
$L^j = \sum_i \lambda^i L^j_i$\,.  Note that $L^j_i = 0$ for $i < 0$ and for $i \geq r$ so that  $L^j = \sum_{i=0}^{r-1} \lambda^i L^j_i$. Then
the formula \eqref{beta-W} becomes
\begin{equation} \label{beta-L}
\beta_i= \spa \Bigl\{ \sum_{s=0}^{i-1}S_s^{i-1}(L^j_{s-k})^{\! (k)} \ :\ 
 0 \leq k \leq i-1 \Bigr\}.
\end{equation}
Note that the sum can be taken from $s=k$; this is then the formula in \cite[Proposition 4.4]{ferreira-simoes-wood}; 
it  gives the first three unitons as:

\begin{align*}
\beta_1=\spa\{&L^j_0\}\,;  \qquad 
\beta_2=\spa\{\pi_{\alpha_1} L^j_0+\pi_{\alpha_1}^{\perp} L^j_1\,,
\ \pi_{\alpha_1}^{\perp} (L^j_0)^{\! (1)} \}\,; \\
\beta_3=\spa\{&\pi_{\alpha_2} \pi_{\alpha_1} L^j_0+(\pi_{\alpha_2} \pi_{\alpha_1}^{\perp}+\pi_{\alpha_2}^{\perp}\pi_{\alpha_1}) L^j_1+\pi_{\alpha_2}^{\perp}\pi_{\alpha_1}^{\perp} L^j_2\,, \\
&(\pi_{\alpha_1}^{\perp}+\pi_{\alpha_2}^{\perp}) (L^j_0)^{\! (1)}+\pi_{\alpha_2}^{\perp}\pi_{\alpha_1}^{\perp} (L^j_1)^{\! (1)}\,,\pi_{\alpha_2}^{\perp}\pi_{\alpha_1}^{\perp} (L^j_0)^{\! (2)}\}.
\end{align*}

As shown in \cite[Lemma 4.2]{ferreira-simoes-wood},  the linear transformation $E:\HH_+ \to \HH_+$\,, given by
$H= \sum \lambda^i H_i \mapsto L = \sum \lambda^i L_i$
where
\begin{equation} \label{H-to-L}
L_i = \sum_{\ell=0}^i\binom{i}{\ell}H_{\ell} \qquad (i \in \nn)
\end{equation}
converts \eqref{beta-W} to the formula in
\cite[Theorem 1.1]{ferreira-simoes-wood}:
\begin{equation} \label{beta-H}
\beta_i= \spa \Bigl\{ \sum_{s=k}^{i-1}C_s^{i-1}(H^j_{s-k})^{\! (k)} \ :\  0 \leq k \leq i-1 \Bigr\}
\end{equation}
where $C^i_s = \sum_{1 \leq i_1 < \cdots < i_s \leq i}
	\pi_{\beta_{i_s}}^{\perp} \cdots \pi_{\beta_{i_1}}^{\perp}$.
\end{example}

These formulae give all harmonic maps of finite uniton number
from any Riemann surface $M$ to $\U n$ as follows.  For any integer $r \in \{0,1,\ldots, n-1\}$, choose
an \emph{arbitrary} $r \times n$ matrix
$(L_i^j)_{0 \leq i \leq r-1,\ 1 \leq j \leq n}$ or
$(H_i^j)_{0 \leq i \leq r-1,\ 1 \leq j \leq n}$
of meromorphic sections of $\CC^n$ and an arbitrary element $\varphi_0 \in \U n$, compute the $\beta_i$ for $i=1,2,\ldots, r$ from \eqref{beta-L} or \eqref{beta-H};
 then compute $\varphi$ from  \eqref{phi-fact}; an associated extended solution $\Phi$ is given by \eqref{Phi-fact2}.  This gives all harmonic maps and associated extended solutions of finite uniton number at most $r$ explicitly in terms of arbitrary meromorphic functions on $M$ using only algebraic operations.

More generally, we can use a mixture of Segal and Uhlenbeck steps to get new formulae, as follows.
Let $L$ be a meromorphic section of $\HH_+$\,. By the \emph{order $o(L)$ of $L$} we mean the least $i$ such that
$P_iL \neq 0$; equivalently, $L = \lambda^{o(L)} \wh{L}$ for some $\wh{L} = \sum_{i \geq 0} \lambda^i\wh{L}_{i}$ with $\wh{L}_{0}$ non-zero.

\begin{proposition} \label{pr:expl-S-U}
Let $W:M \to \Gr_r$ be an extended solution.
Suppose that $X$ generates $W$ as in \eqref{W}.
Let $\{L^j\}$ be a meromorphic spanning set for $X$; denote the order of\/ $L^j$ by $o(j)$.
Let $W_i$ be obtained from $W$ by $u$ Uhlenbeck steps and $r-i-u$ Segal steps, in any order.  Then
\begin{multline} \label{alpha-L-S-U}
\alpha_i 
= \spa\Bigl\{\sum_{s=0}^{i-1}S_s^{i-1}
(L^j_{s-k+u})^{\! (k)}  \ : \ 0 \leq o(j)+k-u \leq i-1 \Bigr\} \\[-2ex]
+ \ \spa\Bigl\{\sum_{s=0}^{i-1}S_s^{i-1}
	(\wh{L}^j_s)^{\! (k)} \ : \ o(j)+k-u <0 \Bigr\}.
\end{multline}
\end{proposition}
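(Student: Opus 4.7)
The plan is to derive \eqref{alpha-L-S-U} by specializing the formula for $\alpha_i$ in Corollary \ref{co:explicit}(a) to a carefully chosen meromorphic spanning set of $W_i$.

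First, I identify $W_i$ explicitly. Since the Segal and Uhlenbeck steps commute (\S \ref{subsec:extreme}), I may reorder so that all $u$ Uhlenbeck steps precede the $r-i-u$ Segal steps; applying them in this order to $W$ gives
$$
W_i = (\lambda^{-u} W) \cap \HH_+ + \lambda^i \HH_+\,.
$$
By Corollary \ref{co:explicit}(a), $\alpha_i = \bigl( \sum_{s=0}^{i-1} S_s^{i-1} P_s \bigr) W_i$. Now $P_s$ annihilates $\lambda^i \HH_+$ for all $s \leq i-1$, and the operator $\sum_s S_s^{i-1} P_s = P_0 \Phi_{i-1}^{-1}$ kills $\lambda W_i$, because $\lambda W_i \subset \lambda W_{i-1}$ and $\Phi_{i-1}^{-1}(\lambda W_{i-1}) = \lambda \HH_+$. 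Hence the computation reduces to applying the operator to any meromorphic spanning set of $(\lambda^{-u}W) \cap \HH_+$ modulo $\lambda W_i$.

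From the Guest decomposition \eqref{W}, a meromorphic spanning set of $W$ is
$\{\lambda^k(L^j)^{(\ell)} : 0 \leq \ell \leq k \leq r-1\}$ together with $\lambda^r \HH_+$; note $\lambda^r \HH_+ \subset \lambda^i \HH_+$ because $u + i \leq r$. I next determine which of the sections $\lambda^{k-u}(L^j)^{(\ell)}$ lie in $W_i$ and which survive modulo $\lambda W_i$. Membership in $W_i$ requires $k - u + o(j) \geq 0$ (so the section is in $\HH_+$) together with $\ell \leq k$ (so $\lambda^k(L^j)^{(\ell)} \in W$); membership in $\lambda W_i$ is equivalent to $\lambda^{k-u-1}(L^j)^{(\ell)} \in W_i$, which holds iff both $\ell \leq k-1$ and $k \geq u - o(j) + 1$. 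The survivors modulo $\lambda W_i$ therefore split into two natural families: the diagonal $\ell = k$ with $k \geq u - o(j)$, giving the section $\lambda^{k-u}(L^j)^{(k)}$; and the shifted sections $(\wh{L}^j)^{(\ell)} = \lambda^{-o(j)}(L^j)^{(\ell)}$ for $0 \leq \ell \leq u - o(j)$, which lie in $\HH_+ \cap \lambda^{-u}W$ and capture the case $k < u - o(j)$ in which $\lambda^{k-u}(L^j)^{(\ell)}$ fails to lie in $\HH_+$. The two families agree at the overlap $\ell = k = u - o(j)$. Together they give a meromorphic spanning set of $W_i$ modulo $\lambda W_i + \lambda^i \HH_+$ indexed by a single integer $k \geq 0$: use $\lambda^{k-u}(L^j)^{(k)}$ when $k \geq u - o(j)$ and $(\wh{L}^j)^{(k)}$ when $k < u - o(j)$.

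Finally, the identities $P_s\bigl(\lambda^{k-u}(L^j)^{(k)}\bigr) = (L^j_{s-k+u})^{(k)}$ and $P_s\bigl((\wh{L}^j)^{(k)}\bigr) = (\wh{L}^j_s)^{(k)}$, summed with the weights $S_s^{i-1}$, produce exactly the two pieces of \eqref{alpha-L-S-U}. The apparent upper cutoff $o(j) + k - u \leq i-1$ in the first piece is automatic, since for $k > u - o(j) + i - 1$ every $s \in \{0, \ldots, i-1\}$ satisfies $s - k + u < o(j)$, forcing $L^j_{s-k+u} = 0$ and making the sum vanish. The main obstacle will be the case analysis above, specifically the observation that the single parameter $k$ in the formula simultaneously records the $\lambda$-power shift (in the first piece) and the derivative order of a shifted section (in the second piece), with the two pieces matching at $k = u - o(j)$ so that the resulting spanning set of $\alpha_i$ is uniformly indexed.
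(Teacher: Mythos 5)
Your proposal is correct and follows essentially the same route as the paper's own proof: both reduce to $\alpha_i=\sum_{s}S_s^{i-1}P_s(W_i)$ with $W_i=\lambda^{-u}W\cap\HH_+ +\lambda^i\HH_+$, and then identify the spanning set of $W_i$ modulo $\lambda W_i+\lambda^i\HH_+$ as the diagonal sections $\lambda^{k-u}(L^j)^{(k)}$ together with the shifted sections $(\wh{L}^j)^{(k)}$ for $o(j)+k<u$. Your version merely spells out the membership case analysis and the overlap at $k=u-o(j)$ in more detail than the paper does.
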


\begin{proof}
We have
$$
\alpha_i=P_0\Phi_{i-1}^{-1}W_i
	= \sum_{s=0}^{i-1}S_s^{i-1}P_s(W_i) \text{ where }
	W_i = \lambda^{-u} W \cap \HH^+  + \lambda^i \HH_+\,.
$$
Now, a meromorphic spanning set for $W \mod \lambda^r\HH_+$ 
is
$\{\lambda^{\ell} (L^j)^{(k)}\ :\ k \leq \ell \leq r-1-o(j)\}$,
 hence a meromorphic spanning set for
$W \cap \lambda^u \HH_+ \mod \lambda(W \cap \lambda^u \HH_+) + \lambda^r\HH_+$  is
$$
\{\lambda^k (L^j)^{\! (k)}\ :\ u \leq o(j) + k \leq r-1 \} \cup \{\lambda^{u-o(j)}(L^j)^{\! (k)}\ :\ o(j) + k < u \}.
$$ 
It follows that a meromorphic spanning set for $W_i = \lambda^{-u} W \cap \HH_+ \mod \lambda W_i + \lambda^i H_+$ is
$$
\{\lambda^{k-u} (L^j)^{\! (k)}\ :\ 0 \leq o(j) + k-u \leq i-1 \} \cup \{\lambda^{-o(j)}(L^j)^{\! (k)}\ :\ o(j) + k -u < 0\}.
$$
On noting that
$\lambda^{-o(j)}(L^j)^{\! (k)} = (\wh{L}^j)^{\! (k)}$, the proposition follows.
\end{proof}

\begin{example} \label{ex:alternating}
(i) If we do alternate Uhlenbeck and Segal steps, we obtain
the \emph{alternating filtration} with associated \emph{alternating factorization}.  This will be important in the real case (Section \ref{sec:real}.  The unitons are given
by \eqref{alpha-L-S-U} with $u = [(r-i+1)/2]$, if we start with an Uhlenbeck step, or $u = [(r-i)/2]$, if we start with a Segal step.

(ii) If we do no Uhlenbeck steps, \eqref{alpha-L-S-U}  reduces to \eqref{beta-W}.
\end{example}

We now consider the other extreme case where we do only Uhlenbeck steps.

\begin{example} \label{ex:Uhl-expl}
Let $W:M \to \Gr_r$ be an extended solution. If we take the Uhlenbeck filtration, then $u = r-i$, so that \eqref{alpha-L-S-U} reduces to the following formula for the Uhlenbeck unitons:
\begin{equation}\label{alpha-Uhl-L} 
\gamma_i = \spa\{\sum_{s=0}^{i-1}S_s^{i-1}(\wh{L}^j_s)^{\! (k)}\ :\ o(j)+k \leq r-i\}.
\end{equation}
Now set $\wh{L} = E(\wh{H})$ where $E:\HH_+ \to \HH_+$ is defined by \eqref{H-to-L}.  Then, by the same calculations as in
\cite[Lemma 4.2]{ferreira-simoes-wood}, the last formula can 
be written
\begin{equation} \label{alpha-Uhl}
\gamma_i = \spa\{\sum_{s=0}^{i-1}C_s^{i-1}(\wh{H}^j_s)^{\! (k)}\ :\
	o(j)+k \leq r-i\},
\end{equation}
which is equivalent to the formulae in \cite{dai-terng}.   As a specific example,
suppose that $r=3$ and that $X$ is spanned by
$L^1 = L^1_0+ \lambda L^1_1+ \lambda^2 L^1_2$ 
and $L^2 = \lambda^2 \wh{L}^2_0$.
Then the formula \eqref{alpha-Uhl} gives
\begin{align*} \label{Dai-T-9.4}
\gamma_1 &= \spa\{  \wh{H}^2_0\,, (H^1_0)^{\! (k)}\ :\ k \leq 2 \}\,, \qquad
\gamma_2 = \spa\{ (H^1_0)^{\! (k)}
	+ \pi_{\alpha_1}^{\perp}(H^1_1)^{(k)}\ :\ k \leq 1 \}\,, \\
\gamma_3 &= \spa\{ H^1_0 + (\pi_{\alpha_1}^{\perp} + \pi_{\alpha_2}^{\perp}) H^1_1
   + \pi_{\alpha_2}^{\perp}\pi_{\alpha_1}^{\perp} H^1_2\}         \,.
\end{align*}
These are the formulae of \cite[Example 9.4]{dai-terng}.
\end{example}

There are many other methods of factorization for which we can give explicit formulae, we mention just one.

\begin{example} \label{ex:Gauss-filtr}
Let $W\subset\HH_+$ be an extended solution.
For $i = 0,1,2, \ldots$, let $W_{(i)}$ denote the
$i$'th osculating subbundle of 
$W$  (Definition \ref{def:assoc-curve}).
{}From \eqref{W-ext-sol}, it follows that
$W \mapsto \wt{W} = W_{(1)}$ is a $\lambda$-step
(see \eqref{lambda-step}), which we shall call a \emph{Gauss step},
and each $W_{(i)}$ is an extended solution.
Suppose that $P_0 W$ is full; this is the case if $\Phi$ is the type one extended solution associated to a harmonic map of finite uniton number.
Then there is some $r \leq n$ such that
$(P_0 W)_{(r)} = \CC^n$ but $(P_0 W)_{(r-1)} \neq \CC^n$; it follows that
$W_{(r)} = \HH_+$\,.
 
For $i = 0,1,\ldots, r$, set $W_i = W_{(r-i)}$.  Then
$(W_i)$ is a $\lambda$-filtration by extended solutions.  In particular, $\Phi$ has finite uniton number at most $r$; however, the uniton number may actually be less than $r$, see below.

We identify the corresponding unitons $\alpha_i$.  {}From Corollary \ref{co:factorizations}(i) we have that 
$
\alpha_i^{\perp} = P_0\Phi_i^{-1}\lambda W_{i-1}
	= P_0\Phi_i^{-1}\lambda (W_i)_{(1)}\,.
$
Now $\lambda(W_i)_{(1)} = \lambda W_i + F W_i$, so, on using
Proposition \ref{pr:operators}, we obtain
$$
\alpha_i^{\perp}
	= P_0\Phi^{-1}_iF W_i
	= A_z^{\varphi_i} P_0\Phi_i^{-1}W_i
	= A_z^{\varphi_i}(\CC^n) = \Ima A_z^{\varphi_i}.
$$

The resulting factorization \eqref{phi-fact} is the
\emph{factorization by $A_z$-images} considered by the second author \cite{wood-unitary};
for maps into Grassmannians, it is the analogue of the
\emph{factorization by Gauss transforms} in \cite{wood-Grass}. In fact, given a harmonic map
$\varphi:M \to G_*(\cn^n)$, define its
\emph{$i$'th $\pa'$-Gauss
transform} $G^{(i)} (\varphi)$ iteratively by
$G^{(0)}(\varphi) = \varphi$  and
$G^{(i)} (\varphi)=  G'\bigr(G^{(i-1)}(\varphi)\bigr)$
so that 
$G^{(1)}(\varphi)$ is the $\pa'$-Gauss transform $G'(\varphi)$
(see Example \ref{ex:cartan-emb}).
Then, if $\Phi$ is the type one extended solution associated to a non-constant harmonic map $\varphi:S^2 \to \cn P^n$,  then the $r$ above may be anything between $1$ and $n$, and
$\varphi_i$ is the $(r-i)$th $\pa'$-Gauss transform of $\varphi$, whereas the uniton number of $\varphi$ is one if it is holomorphic or antiholomorphic, or two if it is not. 

We can give explicit formulae for the unitons in the factorization by $A_z$-images of a polynomial extended solution
$W$ in terms of a meromorphic spanning set $(L^j)$ of a subbundle $X$ which generates $W$.  Indeed, $W_i$ has a meromorphic spanning set 
$\{\lambda^k (L^j)^{\! (k+\ell)}\ :\ k, \ell \in \nn, \ 
0 \leq o(j) + k \leq i-1, \ 0 \leq \ell \leq r-i \} 
\mod \lambda W_i + \lambda^i\HH_+$\,,
so that Corollary \ref{co:explicit} gives
\begin{equation} \label{Gauss-unitons}
\alpha_i =
\spa\bigl\{\sum_{s=k}^{i-1} S_s^{i-1} (L^j_{s-k})^{\! (k+\ell)}\ :\ k, \ell \in \nn,
	\ 0 \leq o(j) + k \leq i-1, \ 0 \leq \ell \leq r-i  \bigr\} \,.
\end{equation}
\end{example} 

\subsection{Complex extended solutions and an explicit Iwasawa factorization}

For a compact Lie subgroup $G$ of $\U n$ with complexification $G^{\cc}$, let $\Lambda G^{\cc} = \{\gamma:S^1 \to G^{\cc} \, :  \,\gamma \text{  is smooth}\}$. 
With notation as in \cite{burstall-guest}, let $\Lambda^+G^{\cc}$
(resp.\ $\Lambda^*G^{\cc}$) denote the subgroup of $\Lambda G^{\cc}$ consisting of maps $S^1 \to G$ which extend holomorphically to the region $\{\lambda \in \cn\ :\ |\lambda| < 1\}$ \ (resp. \ $\{\lambda \in \cn\ :\ 0 < |\lambda| < 1\}$).
Then, by a  \emph{complex extended solution}
$\Psi:U \to \Lambda G^{\cc}$, we mean a holomorphic map
$U \to \Lambda^*G^{\cc}$ on an open subset of $M$  which satisfies
$
\lambda \Psi^{-1}\Psi_z \in \Lambda^+\g^{\cc}
$ 
where $\Lambda^+\g^{\cc}$ is the Lie algebra of $\Lambda^+G^{\cc}$.  

Let $D$ be a discrete subset of $M$ and let $\Psi: M \setminus D \to \Lambda G^{\cc}$ be a holomorphic map  which extends to a meromorphic map $\Psi:M \to \Lambda \gl{n,\cn}$.  Set
$W = \Psi\HH_+$ on $M \setminus D$; by filling out zeros, $W$ extends to a holomorphic subbundle of
$\HH = M \times \H$.  Then \emph{$W$ is an extended solution on $M$ if and only if\/ $\Psi$ is a complex extended solution on $M \setminus D$}.  Explicitly, denote the columns of $\Psi$ by $\{F_1,\ldots, F_n\}$; these give a meromorphic basis for $W \mod \lambda W$.  Conversely, given such a basis, let  
$D$ be the discrete set of points where one of the $F_i$ has a pole or the span of the $F_i$ has dimension less than $n$, and set $\Psi$ equal to the matrix with columns $F_i$\,.

Denote by $\Lambda_{\alg} G^{\cc}$ and $\Lambda_{\alg}^+G^{\cc}$ the subsets of those loops $\lambda \mapsto \gamma(\lambda)$ in $\Lambda G^{\cc}$ and
 $\Lambda^+G^{\cc}$ given by finite Laurent series and polynomials in $\lambda$, respectively, and recall the \emph{Iwasawa decomposition} \cite{pressley-segal}:
 \begin{equation*}
\Lambda_{\alg} G^{\cc} = \Omega_{\alg} G\cdot\Lambda_{\alg}^+G^{\cc}\,,
\end{equation*}
where $\Omega_{\alg}G\cap\Lambda_{\alg}^+G^{\cc}=\{e\}$;
this says that any $\gamma \in \Lambda_{\alg}G^{\cc}$ can be written uniquely as $\gamma = \gamma_u \gamma_+$ where $\gamma_u \in \Omega_{\alg}G$ and $\gamma_+ \in \Lambda_{\alg}^+G^{\cc}$.  A purely algebraic formulation of the following was given in \cite{ferreira-simoes-wood}; here we give a version for our meromorphic complex extended solutions.

\begin{theorem} \label{th:Iwasawa}
 Let $\Psi:M \setminus D \to \Lambda_{\alg} G^{\cc}$ be a complex extended solution which extends to a meromorphic map $\Psi:M \to \Lambda \gl{n,\cn}$.  
Set $\Phi = \Psi_u$ on $M \setminus D$.  Then $\Phi$ extends to an extended solution on $M$ and is given explicitly by algebraic formulae in the entries of\/ $\Psi$ as follows:
the columns of\/ $\Psi$ give a meromorphic basis for
$W \mod \lambda W;$ use \eqref{beta-L}, \eqref{alpha-L-S-U}, \eqref{alpha-Uhl-L} or \eqref{alpha-Uhl-L}
to obtain unitons $\alpha_i$, then $\Phi$ is given by the product \eqref{Phi-fact}. 

 If\/ $\Psi$ is polynomial in $\lambda$, then this gives $\Phi$ polynomial of the same or lesser degree.
\end{theorem}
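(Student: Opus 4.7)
The plan is to identify $\Phi = \Psi_u$ with the loop corresponding to $W := \Psi\HH_+$ under the Grassmannian-model bijection \eqref{Grass-model}, and to read off $\Phi$ from the formulae of Section~\ref{sec:expl} applied to a meromorphic basis of $W\!\mod\lambda W$.

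For the extension of $\Phi$ to all of $M$: by the algebraic Iwasawa decomposition, write $\Psi = \Phi\Psi_+$ on $M\setminus D$ with $\Phi \in \Omega_{\alg}G$ and $\Psi_+ \in \Lambda^+_{\alg}G^{\cc}$.  Both $\Psi_+$ and $\Psi_+^{-1}$ consist only of non-negative Fourier modes in $\lambda$, so they preserve $\HH_+$; therefore $\Psi_+\HH_+ = \HH_+$ and $W = \Psi\HH_+ = \Phi\HH_+$.  The paragraph preceding the theorem shows that the complex-extended-solution hypothesis on $\Psi$, combined with the meromorphic extension to $M$, implies (after filling out zeros) that $W$ is an extended solution on all of $M$; passing back through \eqref{Grass-model} then yields an extension of $\Phi$ to an extended solution on $M$.

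For the explicit formulae: since $\Psi_\lambda$ is invertible in $G^{\cc}$, $\Psi$ induces a fibrewise isomorphism $\cn^n = \HH_+\ominus\lambda\HH_+ \to W\ominus\lambda W$, so the columns $L^j := \Psi e_j$ form a meromorphic basis of $W\!\mod\lambda W$.  Each of the formulae \eqref{beta-L}, \eqref{alpha-L-S-U}, \eqref{alpha-Uhl-L}, \eqref{Gauss-unitons} is obtained from Proposition~\ref{pr:factorizations}'s identity $\alpha_i = P_0\Phi_{i-1}^{-1}W_i$ by constructing a meromorphic spanning set of the filtration-subbundle $W_i$ out of the $L^j$ via $\lambda$-shifts and derivatives and then applying $P_0\Phi_{i-1}^{-1}$.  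Substituting $L^j = \Psi e_j$ produces the unitons $\alpha_i$; assembling them as in \eqref{Phi-fact} gives a polynomial $\Phi' \in \Omega_{\alg}G$ with $\Phi'\HH_+ = W = \Phi\HH_+$, whence $\Phi' = \Phi = \Psi_u$ by the injectivity of \eqref{Grass-model}.  For the degree bound, if $\Psi$ is polynomial of degree $r$ then $L^j_i = 0$ for $i > r$, so $W \subset \HH_+$ and $\Phi$ is itself polynomial; moreover, inspecting the formulae shows that for $i > r$ the contributions vanish and the resulting $\alpha_i = \CC^n$, so the factor $\pi_{\alpha_i} + \lambda\pi_{\alpha_i}^{\perp}$ becomes the identity and the product \eqref{Phi-fact} has at most $r$ non-trivial factors.

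The main obstacle I expect is verifying that the formulae of Section~\ref{sec:expl}, originally derived in terms of a meromorphic spanning set of a generating holomorphic subbundle $X \subset \HH_+/\lambda^r\HH_+$ of $W$, continue to produce the correct unitons when applied to the columns of $\Psi$---a meromorphic basis of $W\!\mod\lambda W$ whose entries need not be sections of any such $X$.  This reduces to checking that at each step the construction depends only on the cosets of the chosen meromorphic sections modulo $\lambda W_i$ together with their derivatives, which is essentially the content of Corollary~\ref{co:explicit} and the computations in the derivation of Example~\ref{ex:Seg-expl}.
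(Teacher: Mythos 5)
Your proposal is correct and follows essentially the same route as the paper, whose proof is a one-line remark that $W=\Phi\H_+=\Psi\H_+$ extends to a holomorphic subbundle over $M$, with everything else delegated to the paragraph preceding the theorem and to the formulae of \S\ref{subsec:expl}. You have simply spelled out the details that the paper leaves implicit (that $\Psi_+$ preserves $\HH_+$ so $\Psi\HH_+=\Psi_u\HH_+$, and that the uniton formulae depend only on a meromorphic spanning set of $W\bmod\lambda W$), so there is nothing to add.
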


\begin{proof}  We need only note that, as above, $W = \Phi\H_+$ extends to a holomorphic subbundle over $M$.
\end{proof}

\section{Maps into complex Grassmannians}\label{sec:Grass}

\subsection{Harmonic maps into complex Grassmannians}
\label{subsec:cx-Grass}

Recall the Cartan embedding \eqref{cartan} of the Grassmannian
$G_*(\cn^n)$ in $\U n$.

Consider  the involution $\nu$ on $\Omega\U n$ given by
$\nu(\eta)(\lambda)=\eta(-\lambda)\eta(-1)^{-1}$ and set 
$$
\Omega\U n^{\nu}=\{\eta\in\Omega\U n\ :\ \nu(\eta)=\eta\}.
$$
For any $i \in \nn$, the map $\nu$ restricts to an involution on $\Omega_i\U n$ with fixed point set
$\Omega_i\U n^{\nu}
= \Omega_i\U n \cap \Omega\U n^{\nu}$.
An extended solution $\Phi:M\to\Omega\U n$ lies in $\Omega\U n^{\nu}$ if and only if it it is invariant under
$\nu$, i.e.,
\begin{equation} \label{Phi-sym}
\Phi_{\lambda}\Phi_{-1}=\Phi_{-\lambda} \qquad (\lambda \in S^1)\,.
\end{equation}
Clearly, if an extended solution $\Phi$ satisfies this condition, the harmonic map $\varphi = \Phi_{-1}$ satisfies $\varphi^2 = I$, and so takes values in $G_*(\cn^n)$.  Conversely, given a harmonic map $\varphi:M\to G_*(\cn^n)$ of finite uniton number, there is a polynomial extended solution  with $\wt{\Phi}_{-1} = \varphi$. 
Indeed, by \cite[Lemma 15.1]{uhlenbeck}, the type one extended solution $\Phi$ associated to $\varphi$ satisfies $\Phi_{\lambda}=Q\Phi_{-\lambda}\Phi_{-1}^{-1}Q$ where $Q=\pi_V-\pi_{V}^{\perp}$ for some $V \in G_*(\cn^n)$, and
$\Phi_{-1}=Q\varphi$.  Setting
$Q_{\lambda}  =\pi_V+ \lambda\pi_{V}^{\perp}$, we see that
$\wt\Phi_{\lambda} = Q_{\lambda}\Phi_{\lambda}$ is a polynomial extended solution which has $\wt\Phi_{-1} = \varphi$
and  satisfies \eqref{Phi-sym}.

Let $\nu:\HH_+ \to \HH_+$ be the involution induced by $\lambda \mapsto -\lambda$; thus if $T = \sum_i T_i \lambda^i$ then
$\nu(T) = \sum_i (-1)^i T_i \lambda^i$.
This induces the involution $\nu:\Gr \to \Gr$ given by $W_{\lambda} \mapsto W_{-\lambda}$. 
 Under the identification of
$\Omega\U n$ with $\Gr$, this corresponds to the involution $\nu$ on $\Omega\U n$ defined above.
Denote by $\Gr^{\nu}$ the fixed point set of $\nu:\Gr \to \Gr$; for any $i \in \nn$, the involution $\nu$ restricts to $\Gr_i$
and has fixed point set $\Gr^{\nu}_i = \Gr^{\nu} \cap \Gr_i$\,.

Most of the following is in \cite[Theorem 15.3]{uhlenbeck}.

\begin{lemma} \label{le:Grass-preserve}
Let $\Phi:M \to \Omega\U n^{\nu}$ be an extended solution and set $W =\Phi\H_+: M \to \Gr^{\nu}$.
Set $\wt{\Phi} = \Phi(\pi_{\alpha}+\lambda^{-1}\pi_{\alpha}^{\perp})$
for some subbundle $\alpha$ of\/ $\CC^n$, and write $\wt{W} = \wt{\Phi}\H_+$\,.
Then 
\begin{enumerate}
\item[(i)] $\wt{W} \in \Gr^{\nu}$ if and only if\/ $\pi_{\alpha}$ commutes with $\Phi_{-1}$, equivalently with $\wt{\Phi}_{-1}\,;$

\item[(ii)] if\/ $\wt{W}$ is obtained from $W$ by
a Segal or Uhlenbeck step (\S \ref{subsec:extreme}), or a Gauss step (Example \ref{ex:Gauss-filtr}), then condition (i) holds;

\item[(iii)] if\/ $W$ is $S^1$-invariant, so is $\wt{W}$.
\\[-6.5ex]
\end{enumerate}
\qed \end{lemma}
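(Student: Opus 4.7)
The plan is to verify each part directly from the defining symmetry $\Phi_{-\lambda} = \Phi_{\lambda}\Phi_{-1}$ characterizing $\Omega\U n^{\nu}$. For part (i), I would write the condition $\wt{W} \in \Gr^{\nu}$ as $\wt{\Phi}_{-\lambda} = \wt{\Phi}_{\lambda}\wt{\Phi}_{-1}$, substitute $\wt{\Phi}_{\lambda} = \Phi_{\lambda}(\pi_{\alpha}+\lambda^{-1}\pi_{\alpha}^{\perp})$, cancel $\Phi_{\lambda}$ on the left, and multiply on the right by the involution $\pi_{\alpha}-\pi_{\alpha}^{\perp}$. Using the identity $(\pi_{\alpha}-\lambda^{-1}\pi_{\alpha}^{\perp})(\pi_{\alpha}-\pi_{\alpha}^{\perp}) = \pi_{\alpha}+\lambda^{-1}\pi_{\alpha}^{\perp}$, the equation reduces to $(\pi_{\alpha}+\lambda^{-1}\pi_{\alpha}^{\perp})\Phi_{-1} = \Phi_{-1}(\pi_{\alpha}+\lambda^{-1}\pi_{\alpha}^{\perp})$, and comparing the $\lambda^0$-coefficient gives $[\pi_{\alpha},\Phi_{-1}]=0$. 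The equivalence with commuting with $\wt{\Phi}_{-1} = \Phi_{-1}(\pi_{\alpha}-\pi_{\alpha}^{\perp})$ is automatic because $\pi_{\alpha}$ commutes with $\pi_{\alpha}-\pi_{\alpha}^{\perp}$.

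For part (ii), I would first expand $\Phi_{\lambda}=\sum_k T_k\lambda^k$ in the symmetry relation and extract the identities $T_k\Phi_{-1}=(-1)^k T_k$ and, by taking adjoints using that $\Phi_{-1}$ is unitary with $\Phi_{-1}^2=I$ (so $\Phi_{-1}^*=\Phi_{-1}$), $\Phi_{-1}T_k^*=(-1)^k T_k^*$. Then for each of the three step types I would identify the uniton using Proposition \ref{prop:Seg-Uhl} or Example \ref{ex:Gauss-filtr} and check $\Phi_{-1}$-invariance: the Segal uniton $\alpha=\ker T_r$ satisfies $T_r(\Phi_{-1}v) = (-1)^r T_r v = 0$ for $v\in\alpha$, hence $\Phi_{-1}(\alpha)\subset\alpha$; the Uhlenbeck uniton $\alpha=\Ima T_0^*$ lies in the $(+1)$-eigenspace of $\Phi_{-1}$ because $\Phi_{-1}T_0^*=T_0^*$; and for the Gauss step we have $\alpha^{\perp}=\Ima A_z^{\varphi}$ (from $\alpha^{\perp}=P_0\Phi^{-1}\lambda W_{(1)}$ and Proposition \ref{pr:operators}(iii)), while differentiating $\varphi^2=I$ gives $\varphi_z\varphi+\varphi\varphi_z=0$, hence $\varphi A_z^{\varphi}=-A_z^{\varphi}\varphi$, so that $\varphi(\Ima A_z^{\varphi})\subset\Ima A_z^{\varphi}$; self-adjointness of $\varphi$ then passes invariance to $\alpha$ itself. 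In each case, combining $\Phi_{-1}$-invariance of $\alpha$ with unitarity and $\Phi_{-1}^2=I$ forces $\Phi_{-1}$ to preserve both $\alpha$ and $\alpha^{\perp}$, which is equivalent to $[\pi_{\alpha},\Phi_{-1}]=0$, giving condition (i).

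For part (iii), the Segal and Uhlenbeck cases follow immediately from Corollary \ref{co:S1-preserve}. For the Gauss case, I would use Proposition \ref{pr:alg-iso}(iv) together with the superhorizontal property from Proposition \ref{pr:S1-invt}(i): an $S^1$-invariant $W\in\Gr_r$ has the explicit form $W=\sum_{i=0}^{r-1}\lambda^i\delta_{i+1}+\lambda^r\HH_+$ with $\pa_z\delta_i\subset\delta_{i+1}$, and then a direct computation gives $\wt{W}=W_{(1)}=W+\pa_z W = \sum_{i=0}^{r-2}\lambda^i\delta_{i+2}+\lambda^{r-1}\HH_+$, which has the same structural form (now of degree $r-1$) and is therefore $S^1$-invariant by Proposition \ref{pr:alg-iso}(iv) applied at level $r-1$. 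The delicate point will be the Gauss case of (ii): after establishing $\varphi A_z^{\varphi}=-A_z^{\varphi}\varphi$ where $A_z^{\varphi}$ has maximal rank, one must check that the $\varphi$-invariance of $\Ima A_z^{\varphi}$ extends globally to the filled-out holomorphic subbundle $\alpha$; this is guaranteed by continuity together with the holomorphic framework of Lemma \ref{le:hol-TS}.
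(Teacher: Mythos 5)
Your proof is correct. Note first that the paper itself offers no argument for this lemma: it is stated without proof and attributed to Uhlenbeck's Theorem~15.3, so your write-up supplies a proof that the paper omits. Part (i) is exactly the computation one would expect, and your use of the identity $(\pi_{\alpha}-\lambda^{-1}\pi_{\alpha}^{\perp})(\pi_{\alpha}-\pi_{\alpha}^{\perp})=\pi_{\alpha}+\lambda^{-1}\pi_{\alpha}^{\perp}$ is right. For part (ii) your eigenvector argument --- via $T_k\Phi_{-1}=(-1)^kT_k$, its adjoint using $\Phi_{-1}^*=\Phi_{-1}$, and the anticommutation $\varphi A_z^{\varphi}=-A_z^{\varphi}\varphi$ for the Gauss step --- works; a slightly shorter route is to observe that the involution $\nu$ on $\Gr$, being induced by the substitution $\lambda\mapsto-\lambda$, commutes with each of the three operations $W\mapsto W+\lambda^{i-1}\H_+$, $W\mapsto(\lambda^{-1}W)\cap\H_+$ and $W\mapsto W_{(1)}$ (the last because $\nu$ commutes with $\pa_z$), so $\nu(W)=W$ forces $\nu(\wt{W})=\wt{W}$ directly, after which (i) yields the commutation. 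For part (iii) you are right that Corollary \ref{co:S1-preserve} covers only the Segal and Uhlenbeck cases and that the Gauss case needs the separate computation $W_{(1)}=\sum_{i=0}^{r-2}\lambda^i\delta_{i+2}+\lambda^{r-1}\HH_+$ from superhorizontality; that computation is correct, with $\delta_{r+1}=\CC^n$ absorbing the top term. The only points to flag are cosmetic: the steps in (ii) presuppose $\Phi$ polynomial (so that $T_r$, $S_0$ and the osculating bundle make sense), which is implicit in the lemma, and the passage from invariance of $\alpha$ under $\Phi_{-1}$ to invariance of $\alpha^{\perp}$ uses unitarity of $\Phi_{-1}$ together with $\Phi_{-1}^2=I$, exactly as you say.
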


Thus any of the factorizations in \S \ref{subsec:expl} give sequences of extended harmonic maps
$\Phi_i:M \to \Omega\U n^{\nu}$.

Conversely, starting with data $\{L^j\}$ where each polynomial $L^j$ has only even or odd powers of $\lambda$, we obtain all harmonic maps into the complex Grassmannian $G_*(\cn^n)$ explicitly.

Let $\varphi:M\to G_*(\cn^n)\subset\U n$ be a harmonic map of finite uniton number and let $\Phi:M\to\Omega\U n^{\nu}$ be
 a polynomial extended solution with $\Phi_{-1} = \varphi$.  Write 
$W=\Phi\H_+\in\Gr^{\nu}$.  Note that $\nu$ restricts to an isometric involution on  $W \ominus \lambda W$.
Let $W_{\pm}$ be the $(\pm 1)$-eigenspaces of $\nu$ on $W$.  Then the $(\pm 1)$-eigenspaces of $\nu$ on $W \ominus \lambda W$ are $\pi(W_+) = W_+ \ominus (\lambda W)_+$ and $\pi(W_-) = W_- \ominus (\lambda W)_-$\,, and these are orthogonal complements of each other.

Recall the filtration $(\wh Y_i)$ of $W \ominus \lambda W$ and resulting orthogonal decomposition \eqref{Ai-decomp}.

\begin{lemma}
\begin{equation} \label{even-odd-decomp}
W\ominus\lambda W = (\sum_iA_{2i})\oplus(\sum_iA_{2i-1})
\end{equation}
is the orthogonal decomposition of\/ $W\ominus\lambda W$ into the two eigenspaces of\/ $\nu$.
\end{lemma}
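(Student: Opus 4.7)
The strategy is to show that each summand $A_i$ in \eqref{Ai-decomp} is $\nu$-invariant, and that $\nu$ acts on $A_i$ as multiplication by $(-1)^i$. The decomposition \eqref{even-odd-decomp} will then follow immediately by grouping even- and odd-indexed summands.

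First I would check that $\nu$ preserves the filtration $\wh Y_0 \supset \wh Y_1 \supset \cdots \supset \wh Y_{r+1} = 0$. Since $W \in \Gr^{\nu}$ we have $\nu(W) = W$, and clearly $\nu(\lambda^i\HH_+) = \lambda^i\HH_+$\,, so $\nu$ preserves $W \cap \lambda^i\HH_+$ and hence $\lambda W \cap \lambda^i\HH_+$. Passing to the quotient $W/\lambda W \cong W \ominus \lambda W$, the induced isometry $\nu$ preserves each $\wh Y_i$ and therefore each orthogonal complement $A_i = \wh Y_i \ominus \wh Y_{i+1}$.

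The main point is to pin down the sign of the action of $\nu$ on $A_i$. Using the description
$$
A_i \cong (W\cap\lambda^i\HH_+) \big/ (\lambda W\cap\lambda^i\HH_+ + W\cap\lambda^{i+1}\HH_+),
$$
pick a representative $\sigma \in W \cap \lambda^i\HH_+$ and write $\sigma = \lambda^i\sigma_i + \lambda^{i+1}\tau$ with $\tau \in \HH_+$\,. Then
$$
\nu(\sigma) = (-1)^i\lambda^i\sigma_i + (-1)^{i+1}\lambda^{i+1}\tau,
$$
so $\nu(\sigma) - (-1)^i\sigma$ lies in $\lambda^{i+1}\HH_+$\,. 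Since both $\nu(\sigma)$ and $(-1)^i\sigma$ belong to $W$, this difference lies in $W \cap \lambda^{i+1}\HH_+$\,, i.e.\ in the kernel of the projection to $A_i$. Therefore $\nu$ acts on $A_i$ as multiplication by $(-1)^i$. This is the one step that actually requires computation; the rest is formal.

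Finally, assembling the eigenspace decomposition: $\sum_i A_{2i}$ lies entirely in the $(+1)$-eigenspace of $\nu$, and $\sum_i A_{2i-1}$ lies entirely in the $(-1)$-eigenspace. Since these two sums are orthogonal (being distinct summands of \eqref{Ai-decomp}) and together exhaust $W \ominus \lambda W$, they must coincide with the two eigenspaces of the isometric involution $\nu$, yielding \eqref{even-odd-decomp}. The one thing worth double-checking is that the identification $W \ominus \lambda W \to W/\lambda W$ is equivariant for $\nu$, but this is automatic because $\nu$ is an isometry preserving $\lambda W$.
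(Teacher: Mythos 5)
Your proof is correct and is essentially the paper's argument in different packaging: both rest on the computation that for a representative $\sigma\in W\cap\lambda^i\HH_+$ one has $\nu(\sigma)-(-1)^i\sigma\in W\cap\lambda^{i+1}\HH_+$, so that $\nu$ acts on $A_i$ by $(-1)^i$ (the paper phrases the same fact as the vanishing of the wrong-parity $\nu$-eigencomponent of the representative, killed by the orthogonality of $A_i$ to $\wh Y_{i+1}$). One cosmetic slip: your displayed formula for $\nu(\sigma)$ should read $(-1)^{i}\lambda^{i}\sigma_i+(-1)^{i+1}\lambda^{i+1}\nu(\tau)$ rather than having $\tau$ in the last term, but the conclusion you draw from it --- that $\nu(\sigma)-(-1)^i\sigma\in\lambda^{i+1}\HH_+$ --- is unaffected.
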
 

\begin{proof} Let $x \in A_i$.  Then $x = \pi(y)$ for some $y \in W \cap \lambda^i \HH_+$.  
Write $y = y_+ + y_-$ where $y_{\pm} \in W_{\pm}$\,.  Then $x = x_+ + x_-$ where $x_{\pm} = \pi(y_{\pm}) \in \pi(W_{\pm})$.

If $i$ is even, $y_- \in W \cap \lambda^{i+1}\HH_+$ so that $x_- \in \wh{Y}_{i+1}$.  Since $x_+$ is orthogonal to $x_-$, we have
$0 = \ip{x}{x_-} = \ip{x_+ + x_-}{x_-} = \ip{x_-}{x_-}$ so that $x_- = 0$ and $x = x_+ = \pi(y_+) \in \pi(W_+)$.
Similarly if $i$ is odd, $x_+ = 0$ and $x = x_- \in \pi(W_-)$.
\end{proof}

Recall that $\Phi^{-1}$ restricts to an isomorphism from $W \ominus\lambda W$ to $\CC^n$;  by
\eqref{Phi-sym}, we have a commutative diagram:\\[-4ex]
\begin{diagram}[height=4ex]
W \ominus \lambda W & \rTo^{\nu} & W \ominus \lambda W \\
\dTo^{\Phi^{-1}} & & \dTo^{\Phi^{-1}} \\
\CC^n & \rTo_{\varphi = \Phi_{-1}} & \CC^n
\end{diagram}
so that the involution $\nu$ on $W \ominus\lambda W$ corresponds to the involution
$\Phi_{-1}=\varphi = \pi_{\varphi} - \pi_{\varphi}^{\perp}$ on $\CC^n$.
Under the isomorphism $\Phi:\CC^n\to W\ominus\lambda W$, the decomposition \eqref{even-odd-decomp}
corresponds to the orthogonal decomposition: $\CC^n = \varphi \oplus \varphi^{\perp}$ into the
eigenspaces of the involution $\varphi$.

We now show how the reduction procedure of Lemma
\ref{le:reduce} can be improved for Grassmannian solutions.

\begin{proposition} \label{pr:reduce-Gr}
Let $W:M \to \Gr^{\nu}_r$ be an extended solution.
Suppose that $\delta_i$ is constant for some
$i \neq 1, r$.  Define $\eta\in\Omega_2\U n^{\nu}$ by
$\eta = \pi_{\delta_i} + \lambda^2 \pi_{\delta_i}^{\perp}$;
note that $\eta_{-1} = I$.
Then $\eta^{-1}W :M \to \Gr^{\nu}_{r-2}$\,.
\end{proposition}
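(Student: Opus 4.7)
The plan is to verify directly that $\wt W := \eta^{-1}W$ satisfies the four conditions needed to lie in $\Gr^{\nu}_{r-2}$ as an extended solution: (a) $\wt W \subset \HH_+$, (b) $\lambda^{r-2}\HH_+ \subset \wt W$, (c) $\nu$-invariance, and (d) being an extended solution.

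Conditions (c) and (d) are essentially automatic. Since $\eta$ is even in $\lambda$ with $\eta_{-1}=I$, the loop $\eta^{-1}=\pi_{\delta_i}+\lambda^{-2}\pi_{\delta_i}^{\perp}$ is itself $\nu$-invariant, so the $\nu$-invariance of $W$ passes to $\wt W$. Moreover, $\eta$ is independent of $z$, so multiplication by $\eta^{-1}$ commutes with $\pa_z$ and $\pa_{\zbar}$, and $\wt W$ inherits conditions \eqref{W-ext-sol}(a)--(b) from $W$.

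For (a), I would expand, for $\sigma=\sum_{k\geq 0}\lambda^k \sigma_k \in W$,
$$
\eta^{-1}\sigma \;=\; \sum_{k\geq 0}\lambda^{k}\pi_{\delta_i}\sigma_k \;+\; \sum_{k\geq 0}\lambda^{k-2}\pi_{\delta_i}^{\perp}\sigma_k,
$$
so the only possibly nonzero negative Fourier coefficients of $\eta^{-1}\sigma$ are $\pi_{\delta_i}^{\perp}\sigma_0$ in degree $-2$ and $\pi_{\delta_i}^{\perp}\sigma_1$ in degree $-1$. The first vanishes because $\sigma_0 \in P_0 W = \delta_1 \subset \delta_i$. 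For the second I would exploit $\nu$-invariance of $W$: the element $\tfrac12(\sigma-\nu(\sigma))=\lambda\sigma_1+\lambda^3\sigma_3+\cdots$ lies in $W\cap\lambda\HH_+$, so $\sigma_1 \in P_1(W\cap\lambda\HH_+)=\delta_2\subset\delta_i$, where the last inclusion uses $i\geq 2$.

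The main obstacle is (b). Since $\wt W$ is $\lambda$-closed, it suffices to show $\lambda^{r-2}\cn^n\subset\wt W$, i.e.\ $\eta(\lambda^{r-2}v)=\lambda^{r-2}\pi_{\delta_i}v+\lambda^r\pi_{\delta_i}^{\perp}v\in W$ for every $v\in\cn^n$. The $\lambda^r$ term is absorbed by $\lambda^r\HH_+\subset W$, reducing the task to proving $\lambda^{r-2}\delta_i\subset W$. Here the hypothesis $i\leq r-1$ gives $\delta_i\subset\delta_{r-1}=P_{r-2}(W\cap\lambda^{r-2}\HH_+)$, so every $w\in\delta_i$ arises as $P_{r-2}\sigma$ for some $\sigma\in W\cap\lambda^{r-2}\HH_+$. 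After subtracting a term in $\lambda^r\HH_+\subset W$, I may assume $\sigma=\lambda^{r-2}w+\lambda^{r-1}\tau\in W$, and then the $\nu$-symmetric combination $\tfrac12\bigl(\sigma+(-1)^r\nu(\sigma)\bigr)=\lambda^{r-2}w$ lies in $W$, as desired. Without $\nu$-invariance one would obtain only $\lambda^{r-1}\delta_i\subset W$ as in Lemma \ref{le:reduce}; it is precisely the two uses of the $\nu$-symmetry -- to halve Fourier modes at opposite ends of the filtration -- that consume the two hypotheses $i\neq 1$ and $i\neq r$.
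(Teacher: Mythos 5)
Your proof is correct and follows essentially the same route as the paper's: negative Fourier coefficients of $\eta^{-1}W$ vanish because $P_0W=\delta_1\subset\delta_i$ and (via $\nu$-invariance) $P_1W=\delta_2\subset\delta_i$, while $\lambda^{r-2}\HH_+\subset\eta^{-1}W$ follows from $\delta_i\subset\delta_{r-1}$. You merely spell out details the paper leaves implicit, namely the $\nu$-symmetrization arguments showing $P_1W=P_1(W\cap\lambda\HH_+)$ and $\lambda^{r-2}\delta_{r-1}\subset W$, and the routine checks that $\eta^{-1}W$ is again a $\nu$-invariant extended solution.
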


\begin{proof} 
We have
$P_{-2}\wt{W} = \pi_{\delta_i^{\perp}}P_0W = \pi_{\delta_i^{\perp}}\delta_1 = 0$ since $\delta_1 \subset \delta_i$.
Similarly,
$P_{-1}\wt{W} = \pi_{\delta_i}^{\perp}P_1 W$.
Since $\nu(W) = W$, we have
$P_1 W = P_1(W \cap \lambda\HH_+) = \delta_2$\,,
so that $P_{-1}\wt{W} = \pi_{\delta_i}^{\perp}\delta_2 = 0$.
Combining these shows that $\wt{W} \subset \HH_+$\,.

Further since $W \supset \lambda^{r-2}\delta_{r-1}  + \lambda^r\HH_+$ and
 $\delta_i \subset \delta_{r-1}$, we have
$$
P_{r-2}\wt{W} \supset
P_{r-2}\bigl\{(\pi_{\delta_{i}} + \lambda^{-1}\pi_{\delta_i}^{\perp}) (\lambda^{r-2}\delta_{r-1} + \lambda^r\HH_+)\bigr\}
= \delta_i + \delta_i^{\perp} = \cn^n.
$$
Hence
$\lambda^{r-2}\HH_+ \subset \wt{W}$.
\end{proof}

\begin{corollary} \label{co:reduce-Gr}
Suppose that $W:M \to \Gr_r^{\nu}$ is an extended solution with
$A_i = 0$ for some $i \neq 0, r$, then there is $\eta \in \Omega_2 \U n^{\nu}$ with $\eta_{-1} = I$ such that
$\eta^{-1}W :M \to \Gr_{r-2}^{\nu}$\,.
\qed \end{corollary}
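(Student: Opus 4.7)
The approach is to take the natural candidate $\eta = \pi_{\delta_i} + \lambda^2 \pi_{\delta_i}^{\perp}$ and to reduce to the already-established Proposition \ref{pr:reduce-Gr} via Lemma \ref{le:reduce}(ii).  First I would invoke Lemma \ref{le:reduce}(ii): the hypothesis $A_i=0$ gives both $\delta_i = \delta_{i+1}$ and constancy of $\delta_i$, so $\eta$ is a well-defined constant element of $\Omega_2\U n^{\nu}$ (only even powers of $\lambda$ occur) satisfying $\eta_{-1}=I$.  When $i\in\{2,\ldots,r-1\}$ the hypotheses of Proposition \ref{pr:reduce-Gr} are met directly and yield $\eta^{-1}W: M\to\Gr^{\nu}_{r-2}$.

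The only index that is covered by the corollary but excluded by Proposition \ref{pr:reduce-Gr} is $i=1$, and this is the point that needs extra care.  Inspecting the proof of that proposition, the only step that is not automatic for $i=1$ is the computation
\begin{equation*}
P_{-1}(\eta^{-1}W) \;=\; \pi_{\delta_1^{\perp}}P_1W \;=\; \pi_{\delta_1^{\perp}}\delta_2,
\end{equation*}
which vanishes if and only if $\delta_2\subset\delta_1$.  But this is precisely the additional information supplied by $A_1=0$, namely $\delta_1=\delta_2$; so the step is still valid, drawing on the coincidence of the first two subbundles rather than merely the inclusion $\delta_1\subset\delta_i$ used for $i\geq 2$.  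Every other step of the proof of Proposition \ref{pr:reduce-Gr} survives unchanged: $P_{-2}(\eta^{-1}W)=\pi_{\delta_1^{\perp}}\delta_1=0$, and $\lambda^{r-2}\HH_+\subset\eta^{-1}W$ follows from $\delta_1\subset\delta_{r-1}$, which holds since $r\geq 2$ is built into the assumption $i=1\neq r$.

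The $\nu$-invariance of $\eta^{-1}W$ is automatic, since $\eta$ is $\nu$-invariant and $\nu(W)=W$.  Thus in all allowed cases $\eta^{-1}W\in\Gr^{\nu}_{r-2}$, as required, and the reduction is carried out by a loop which fixes the underlying harmonic map $\Phi_{-1}$.  I expect the main (indeed only) obstacle to be the boundary index $i=1$: one must recognize that the vanishing of $A_1$ gives strictly more than constancy of $\delta_1$, namely the coincidence $\delta_1=\delta_2$, which is exactly what is needed to kill the degree $(-1)$ Fourier coefficient of $\eta^{-1}W$ and thereby recover the conclusion of Proposition \ref{pr:reduce-Gr} at an index that proposition does not cover.
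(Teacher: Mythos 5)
Your argument is correct and follows the paper's intended route: the corollary is stated there without proof, as an immediate combination of Lemma \ref{le:reduce}(ii) (which extracts $\delta_i=\delta_{i+1}$ constant from $A_i=0$) with Proposition \ref{pr:reduce-Gr}, which is exactly what you do. Your extra care at the boundary index $i=1$ --- noting that $A_1=0$ supplies the inclusion $\delta_2\subset\delta_1$ needed to kill $P_{-1}(\eta^{-1}W)$, so that the proof of Proposition \ref{pr:reduce-Gr} still runs at an index that proposition formally excludes (and in particular covers the case $r=2$, where one cannot simply re-apply the proposition at index $2$) --- is a genuine point the paper glosses over, and you handle it correctly.
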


Note that if $A_0 = 0$ if and only if $W = \lambda \wt{W}$ for some $\wt{W}:M \to \Gr_{r-1}^{\nu}$ and $A_r = 0$ if and only if
$W \in \Gr_{r-1}^{\nu}$.

\begin{corollary} Let $W:M \to \Gr_r^{\nu}$ be an extended solution, and let $\beta_1,\dots,\beta_r$ be the Segal unitons of\/ $W$. If\/ $\rank\beta_i=\rank\beta_{i+1}$ for some $i\in\{1,\dots,r-1\}$, then there is an $\eta\in\Omega_2\U n^{\nu}$ with $\eta_{-1} = I$ such that $\eta^{-1} W:M \in \Gr_{r-2}^{\nu}$\,.
\qed \end{corollary}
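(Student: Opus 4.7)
The plan is direct: this corollary is essentially a packaged restatement of Lemma~\ref{le:reduce}(ii) combined with Corollary~\ref{co:reduce-Gr}, so I would not introduce any new construction, only verify that the hypotheses line up.

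First I would translate the rank hypothesis into a statement about the summands $A_i$ of the decomposition \eqref{Ai-decomp}. Lemma~\ref{le:reduce}(ii) explicitly records the equivalence
\[
A_i = 0 \quad \Longleftrightarrow \quad \rank \beta_i = \rank \beta_{i+1},
\]
so the assumption gives $A_i = 0$ for some $i \in \{1,\dots,r-1\}$. In particular $i \neq 0$ and $i \neq r$, which is precisely the index restriction required by Corollary~\ref{co:reduce-Gr}.

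Next I would invoke Corollary~\ref{co:reduce-Gr} directly: it provides $\eta \in \Omega_2 \U n^{\nu}$ with $\eta_{-1} = I$ such that $\eta^{-1} W : M \to \Gr_{r-2}^{\nu}$, which is the conclusion sought. There is really no obstacle to speak of, since all the substantive work has already been done: Proposition~\ref{pr:reduce-Gr} supplies the explicit form $\eta = \pi_{\delta_i} + \lambda^2 \pi_{\delta_i}^{\perp}$, from which both $\eta_{-1} = \pi_{\delta_i} + \pi_{\delta_i}^{\perp} = I$ and the $\nu$-invariance $\nu(\eta) = \eta$ (only even powers of $\lambda$ appear) are manifest, and Lemma~\ref{le:reduce}(ii) handles the constancy of $\delta_i$ needed to apply Proposition~\ref{pr:reduce-Gr}. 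The only point I would make explicit is that the index range $\{1,\dots,r-1\}$ in the corollary is chosen precisely to avoid the degenerate cases $A_0 = 0$ (corresponding to $W = \lambda \wt W$) and $A_r = 0$ (corresponding to $W$ already lying in $\Gr_{r-1}^{\nu}$), which would not be reducible by a single element of $\Omega_2 \U n^{\nu}$.
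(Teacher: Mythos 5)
Your proposal is correct and follows exactly the route the paper intends: the corollary is stated with no proof precisely because it is the combination of the equivalence $A_i=0\Leftrightarrow\rank\beta_i=\rank\beta_{i+1}$ from Lemma \ref{le:reduce}(ii) with Corollary \ref{co:reduce-Gr}, applied for $i\in\{1,\dots,r-1\}$ so that $i\neq 0,r$. Your additional remarks on the explicit form of $\eta$ and on why the endpoint indices are excluded are accurate but not needed.
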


A similar result is true for the Uhlenbeck unitons.

\smallskip

We may iterate the reduction process of Corollary \ref{co:reduce-Gr} to obtain the following result.

\begin{proposition}\label{pr:normalized-Gr} Given an extended solution $W:M \to \Gr_r^{\nu}$, there exists an integer $s$ with $0\leq s\leq r$ and  $\eta\in\Omega_{r-s}\U n^{\nu}$ such that
$\eta^{-1}W:M \to \Gr_s^{\nu}$ is normalized with $\delta_i$ non-constant for $i \neq 0,s$. 
\qed\end{proposition}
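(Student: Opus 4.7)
The plan is to iterate the reduction in Corollary \ref{co:reduce-Gr}, proving the proposition by strong induction on $r$, in exact analogy with the proof of Proposition \ref{pr:normalized} from Lemma \ref{le:reduce}. The base case $r=0$ is immediate: $W=\HH_+$, so $s=0$ and $\eta=I$ work.

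For the inductive step, given $W:M\to\Gr_r^\nu$, if $W$ is already normalized with $\delta_i$ non-constant for every $i\neq 0,r$, then set $s=r$ and $\eta=I\in\Omega_0\U n^\nu$. Otherwise, some $A_i$ vanishes for $i\neq 0,r$, and Corollary \ref{co:reduce-Gr} produces $\eta_1\in\Omega_2\U n^\nu$ with $\eta_1(-1)=I$ such that $\eta_1^{-1}W:M\to\Gr_{r-2}^\nu$. Apply the induction hypothesis to $\eta_1^{-1}W$ to obtain $s\leq r-2$ and $\eta_2\in\Omega_{r-2-s}\U n^\nu$ with the requisite properties, and set $\eta=\eta_1\eta_2$.

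Three items need verification. Polynomial degrees add, so $\deg\eta\leq 2+(r-2-s)=r-s$, placing $\eta\in\Omega_{r-s}\U n$. Strengthening the induction hypothesis to also assert $\eta_k(-1)=I$ at each stage (automatic from the construction via Proposition \ref{pr:reduce-Gr}), one has $\eta_i(-\lambda)=\eta_i(\lambda)\eta_i(-1)=\eta_i(\lambda)$ for $i=1,2$, whence
\[
(\eta_1\eta_2)(-\lambda)=\eta_1(-\lambda)\eta_2(-\lambda)=\eta_1(\lambda)\eta_2(\lambda)=(\eta_1\eta_2)(\lambda),
\]
and $(\eta_1\eta_2)(-1)=I$, so $\nu(\eta_1\eta_2)=\eta_1\eta_2$. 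Finally, $\eta^{-1}W=\eta_2^{-1}\eta_1^{-1}W$ is in $\Gr_s^\nu$ and normalized by the induction hypothesis.

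The main obstacle is bookkeeping: one must carry the condition $\eta(-1)=I$ through the induction so that $\nu$-invariance of the composite does not require $\eta_1(-1)$ to commute with $\eta_2$ in any nontrivial way. Termination is automatic since each application of Corollary \ref{co:reduce-Gr} drops the degree by exactly $2$, bounding the number of iterations by $\lfloor r/2\rfloor$.
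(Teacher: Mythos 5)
Your overall strategy --- iterate the degree-two reduction, tracking degrees and $\nu$-invariance through a strong induction --- is exactly what the paper intends (its own ``proof'' is just the one-line remark preceding the statement), and your bookkeeping of $\deg\eta$ and of $\nu(\eta_1\eta_2)=\eta_1\eta_2$ is sound. However, there is a genuine logical gap in the inductive step: the negation of ``$W$ is normalized with $\delta_i$ non-constant for every $i\neq 0,r$'' does \emph{not} imply ``$A_i=0$ for some $i\neq 0,r$'', which is the only hypothesis under which Corollary \ref{co:reduce-Gr} applies. Two cases escape your dichotomy.

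First, it can happen that $A_0=0$ or $A_r=0$ while $A_i\neq 0$ for all $i\neq 0,r$. These must be treated separately, as the remark following Corollary \ref{co:reduce-Gr} indicates: $A_r=0$ means $W$ already lies in $\Gr_{r-1}^{\nu}$ (recurse with $\eta=I$ and degree dropping by one), while $A_0=0$ means $W=\lambda\wt{W}$ with $\wt{W}\in\Gr_{r-1}^{\nu}$, so one takes $\eta=\lambda I\in\Omega_1\U n^{\nu}$. Note that this last $\eta$ has $\eta_{-1}=-I\neq I$, so your strengthened hypothesis ``$\eta_k(-1)=I$'' fails on this branch; the $\nu$-invariance of the composite is rescued only because $-I$ is central. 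Second, and more seriously, all the $A_i$ may be nonzero while some $\delta_i$ with $i\neq 0,r$ is constant: $A_i=0$ is equivalent to $\delta_i=\delta_{i+1}$, which forces $\delta_i$ constant, but the converse is false --- Lemma \ref{le:reduce}(ii) is only a one-way implication. In that situation your procedure stalls: the termination test fails, yet Corollary \ref{co:reduce-Gr} cannot fire. Here you must invoke Proposition \ref{pr:reduce-Gr} directly, with $\eta=\pi_{\delta_i}+\lambda^2\pi_{\delta_i}^{\perp}$ built from the constant subbundle $\delta_i$ itself rather than from a vanishing $A_i$; that is precisely what that proposition is there for. With these extra branches added to the case analysis (and the degree count relaxed to allow steps of size one as well as two, which only helps the bound $\deg\eta\leq r-s$), the induction closes.
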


\begin{corollary}  Let $\varphi:M\to G_k(\cn^n)$ be a harmonic map of finite uniton number.
Then there is a polynomial extended solution $\Phi:M\to\Omega\U n^{\nu}$ of degree $r\leq2\min\{k,n-k\}$, with $\Phi_{-1}=\pm\varphi$. If $2k=n$, then $\Phi$ can be chosen to have degree $r\leq 2k-1$.
\end{corollary}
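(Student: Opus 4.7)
The plan is to combine the Grassmannian normalization of Proposition \ref{pr:normalized-Gr} with a rank count applied to the $\nu$-eigenspace decomposition \eqref{even-odd-decomp}, reading the bound off the dimensions of $V$ and $V^\perp$.

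First, I would invoke the existence statement following \eqref{Phi-sym}: since $\varphi$ has finite uniton number, there is a polynomial extended solution $\Phi_0:M\to\Omega\U n^\nu$ with $(\Phi_0)_{-1}=\varphi$ (obtained from the type one extended solution of $\varphi$ by multiplying by $Q_\lambda=\pi_V+\lambda\pi_V^\perp$). Proposition \ref{pr:normalized-Gr} then furnishes $\eta\in\Omega_{r_0-s}\U n^\nu$ such that $\Phi:=\eta^{-1}\Phi_0:M\to\Omega_s\U n^\nu$ is normalized. Each elementary factor of $\eta$ comes from Proposition \ref{pr:reduce-Gr}, has the form $\pi_{\delta_i}+\lambda^2\pi_{\delta_i}^\perp$, and satisfies $\eta_{-1}=I$, so $\Phi_{-1}=\varphi$. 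The entire problem thus reduces to bounding $s$.

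Next, put $W=\Phi\HH_+$. By the orthogonal splitting \eqref{even-odd-decomp}, $W\ominus\lambda W=\bigl(\bigoplus_{2i\leq s}A_{2i}\bigr)\oplus\bigl(\bigoplus_{2i-1\leq s}A_{2i-1}\bigr)$ is the eigenspace decomposition of the involution $\nu$. The commutative diagram just below \eqref{even-odd-decomp} identifies $\nu$, under the isometry $\Phi:\CC^n\to W\ominus\lambda W$, with $\varphi=\pi_V-\pi_V^\perp$; therefore
\[
\sum_{0\leq 2i\leq s}\rank A_{2i}=\dim V=k,\qquad
\sum_{1\leq 2i-1\leq s}\rank A_{2i-1}=n-k.
\]
Normalization forces $\rank A_i\geq 1$ for every $0\leq i\leq s$. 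Now I split on the parity of $s$. If $s=2m$, the even index set $\{0,2,\ldots,2m\}$ has $m+1$ elements and the odd set $\{1,3,\ldots,2m-1\}$ has $m$, giving $m+1\leq k$ and $m\leq n-k$, hence $s\leq\min\{2k-2,\,2(n-k)\}\leq 2\min\{k,n-k\}$. If $s=2m+1$, both parity classes contain $m+1$ indices, so $m+1\leq\min\{k,n-k\}$ and $s\leq 2\min\{k,n-k\}-1$. In the balanced case $n=2k$, the even branch yields $s\leq 2k-2$ and the odd branch $s\leq 2k-1$, so $s\leq 2k-1$ either way.

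The main obstacle is the opening step: one must be confident that the reduction in Proposition \ref{pr:normalized-Gr}, which decreases the degree in increments of two, stays inside $\Omega\U n^\nu$ and preserves the value at $\lambda=-1$. This is exactly what Proposition \ref{pr:reduce-Gr} guarantees, and once it is in hand the remainder is an elementary pigeonhole count on $\rank A_i$. The $\pm$ sign in the statement is cosmetic --- the construction above yields $\Phi_{-1}=\varphi$ exactly, and the alternative sign corresponds merely to replacing $V$ by $V^\perp$, giving the same degree bound by the symmetry $k\leftrightarrow n-k$.
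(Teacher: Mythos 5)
Your proposal is correct and follows essentially the same route as the paper: produce a $\nu$-invariant polynomial extended solution with $\Phi_{-1}=\pm\varphi$, normalize it via Proposition \ref{pr:normalized-Gr}, and read the degree bound off the count $\sum_i\rank A_i=n$ split across the two $\nu$-eigenspaces of $W\ominus\lambda W$ (a pigeonhole argument the paper leaves implicit in the words ``this implies that $r\leq 2k$''). The only slight inaccuracy is your claim that the normalization gives $\Phi_{-1}=\varphi$ exactly: the reduction may also strip off powers of $\lambda$ (when $A_0=0$), which flips the sign; but since your rank count is symmetric under $k\leftrightarrow n-k$, this does not affect the bound, as you note.
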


\begin{proof} Without loss of generality, we may assume that $k\leq n-k$.
Let $\wt\Phi$ be any algebraic extended solution associated to $\varphi$. Fix a point $z_0\in M$ and set $\wt\Theta=\wt\Phi(z_0)^{-1}\wt\Phi$. Then, clearly, $\wt\Theta$ is still algebraic.  Let $\varphi(z_0)=V_0$ and $Q_{\lambda}=\pi_{V_0}+\lambda\pi_{V_0}^{\perp}$. Then $\Theta=Q\wt\Theta$ satisfies $\Theta_{-1}(z_0)=\varphi(z_0)$, so by uniqueness, $\Theta_{-1}=\varphi$. Furthermore, since $Q=\Theta(z_0)\in\Omega\U n^{\nu}$, it follows again by uniqueness that $\Theta:M\to\Omega\U n^{\nu}$. Finally, let $\Phi=\lambda^j\Theta$, where $j$ is chosen to make $\Phi$ is polynomial; denote its degree by $r$. By the above proposition we may assume that $W=\Phi\H_+$ is normalized; this implies that $r\leq 2k$ and, when $2k=n$, that $r\leq 2k-1$.  
\end{proof}

\begin{remark} (i) The corollary applies to any harmonic map from a \emph{compact} Riemann surface which admits an extended solution since, as noted in \S \ref{subsec:basic}, this is of finite uniton number.
Indeed, for any associated extended solution $\wt\Phi$, the extended solution $\wt\Theta =\wt\Phi(z_0)^{-1}\wt\Phi$ satisfies
$\wt\Theta(z_0) = I$, and so is algebraic by \cite{ohnita-valli}.

(ii) This bound on the minimal uniton number was originally conjectured by Uhlenbeck \cite{uhlenbeck}, and later proved using different methods by Y.\ Dong and Y.\ Shen \cite{dong-shen}.
\end{remark}

\section{Harmonic maps into $\O n$, $\Sp n$ and their inner symmetric spaces} \label{sec:real}

\subsection{Harmonic maps into $\O n$.} \label{subsec:SOn}

As usual, let $M$ be any Riemann surface.
We consider $\O n$ as a subgroup of $\U n$ and look for uniton factorizations which give harmonic maps from $M$ into this subgroup.  Note that any based loop in $\O n$, and so any extended solution, will actually have values in $\SO n$, i.e.,
$\Omega \O n = \Omega \SO n$.

\begin{definition}\label{def:real} Let $W \in \Gr$ and let $i \in \nn$.  We say that $W$ is
 \emph{real (of degree $i$)} if $W \in \Gr_i$  and
$\ov{W}^{\perp}=\lambda^{1-i}W$. We denote the set of such $W$ by $\Gr_i^{\rn}$. 
\end{definition}

For $i \in \nn$, denote by $\Omega_i\U n^{\rn}$ the set of $\Phi\in\Omega_i\U n$ for which $\ov\Phi=\lambda^{-i}\Phi$. For $\Phi\in\Omega\U n$ set $W=\Phi\H_+$; then, it is easy to see (cf. \cite[\S 8.5]{pressley-segal}) that {\it $\Phi\in\Omega_i\U n^{\rn}$ if and only if\/ $W\in\Gr_i^{\rn}$}. If $i=2j$, then this condition is equivalent to $\Psi = \lambda^{-j}\Phi$ being real in the usual sense, i.e., $\Psi\in\Omega\O n$, equivalently,
\begin{equation}\label{Phi-SOn}
\Psi = \sum_{\ell=-j}^j \lambda^{\ell} T_{\ell} \quad
\text{with} \quad T_{-\ell} = \ov{T_{\ell}} \quad \forall \ell\,.
\end{equation}
Note that this implies that $\Psi_{-1} ( = \pm \Phi_{-1} )$ belongs to $\O n$.

Now let $W_i \in \Gr_i$ with $i \geq 2$.  Define $W_{i-2}\in\Gr_{i-2}$ by 
$$
W_{i-2}=(\lambda^{-1}W_i\cap \H_+)+\lambda^{i-2}\H_+\,.
$$ 
We see that $W_{i-2}$ is obtained from $W_i$ by first doing an Uhlenbeck step and then a Segal step; since these two operations commute (see \S \ref{subsec:extreme}), $W_{i-2}$ is equally well obtained from $W_i$ by first doing a Segal step and then a Uhlenbeck step. We are primarily interested in the real case, where it is the passage from $W_i$ to $W_{i-2}$ which interests us.

\begin{proposition}  \label{pr:real-properties}
{\rm (i)} If\/ $W_i\in\Gr_i^{\rn}$, then $W_{i-2}\in\Gr_{i-2}^{\rn}$\,.

{\rm (ii)} Write $W_i=\Phi_i\H_+\,$, $W_{i-2}=\Phi_{i-2}\H_+\,$, so that
$$
\Phi_{i-2} = \Phi_i(\pi_{\alpha} + \lambda^{-1}\pi_{\alpha}^{\perp})(\pi_{\beta} + \lambda^{-1}\pi_{\beta}^{\perp})
$$ for two subspaces $\alpha, \beta$. 
\begin{itemize}
\item[(a)] If $\alpha$ is Uhlenbeck then $\alpha$ and $\beta^{\perp}$ are isotropic$;$
\item[(b)] if $\alpha$ is Segal, then $\alpha^{\perp}$ and $\beta$ are isotropic. 
\end{itemize}
\end{proposition}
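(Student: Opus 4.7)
The plan is to prove (i) by exploiting the involution $W\mapsto W^I = \lambda^{i-1}\ov{W}^{\perp}$ of Remark \ref{rem:duality} together with the commutativity of Segal and Uhlenbeck steps noted in \S \ref{subsec:extreme}, and to prove (ii) by translating the Segal/Uhlenbeck alternative via Proposition \ref{prop:Seg-Uhl} into explicit descriptions of $\alpha$ and $\beta$ in terms of the Laurent coefficients of $\Phi_i$, then verifying isotropy by direct computation.

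For (i), the reality condition $W_i \in \Gr_i^{\rn}$ is exactly the statement that $W_i$ is fixed by the involution $W \mapsto W^I$. Since this involution swaps Segal with Uhlenbeck on individual steps, the chain $W_i \to W_i^U \to (W_i^U)^S = W_{i-2}$ transforms into $W_i^I \to (W_i^U)^I \to W_{i-2}^I$ in which each step is of the opposite type; using $W_i^I=W_i$ and the commutativity of the two steps, the transformed chain terminates at $(W_i^S)^U = W_{i-2}$. Hence $W_{i-2}^I = W_{i-2}$, which is precisely $\ov{W_{i-2}}^{\perp} = \lambda^{3-i}W_{i-2}$.

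For (ii)(a), write $\Phi_i=\sum T_k^i\lambda^k$ and $\Phi_i^{-1}=\sum S_k^i\lambda^{-k}$, so that reality gives $\ov{T_k^i}=T_{i-k}^i$ and hence $(S_0^i)^T=\ov{S_0^{i*}}=\ov{T_0^i}=T_i^i$. By Proposition \ref{prop:Seg-Uhl}(ii), $\alpha = \Ima S_0^i$, so isotropy of $\alpha$ is equivalent to $(S_0^i)^T S_0^i = T_i^i S_0^i = 0$, which is the $\lambda^i$-coefficient of $\Phi_i\Phi_i^{-1}=I$. The inclusion $\alpha\subset\ker T_i^i$ just established forces $\wt\Phi=\Phi_i(\pi_\alpha+\lambda^{-1}\pi_\alpha^{\perp})$ to be a polynomial of degree at most $i-1$ with leading coefficient $\wt T_{i-1} = T_{i-1}^i\pi_\alpha + T_i^i$; the next step being Segal then identifies $\beta=\ker\wt T_{i-1}$ by Proposition \ref{prop:Seg-Uhl}(i), so isotropy of $\beta^{\perp}=\Ima\wt T_{i-1}^*$ amounts to $\ov{\wt T_{i-1}}\,\wt T_{i-1}^* = 0$. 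Expanding using reality yields four terms; three are controlled by the simplifications $T_0^i\pi_\alpha = T_0^i$ (because $\alpha^{\perp}=\ker T_0^i$), $\pi_{\ov\alpha}S_i^i = S_i^i$ (because $\ov\alpha=\Ima S_i^i$), and $\pi_{\ov\alpha}\pi_\alpha = 0$ (the isotropy of $\alpha$ just proved); the sum reduces to $T_0^iS_{i-1}^i + T_1^iS_i^i$, which vanishes as the $\lambda^{1-i}$-coefficient of $\Phi_i\Phi_i^{-1}=I$.

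Part (b) follows the same pattern: if $\alpha$ is Segal then $\alpha=\ker T_i^i$ and $\alpha^{\perp}=\Ima S_i^i$, whose isotropy reduces to $T_0^iS_i^i=0$, the $\lambda^{-i}$-coefficient of $\Phi_i\Phi_i^{-1}=I$; one then computes the intermediate polynomial corresponding to $W_i^S$, identifies $\beta$ as the Uhlenbeck subspace of the second step via Proposition \ref{prop:Seg-Uhl}(ii), and checks isotropy by the analogous expansion, using the isotropy of $\alpha^{\perp}$ to kill the residual cross-term. The main obstacle is the bookkeeping in this second isotropy check: the intermediate subspace $\wt W\in\Gr_{i-1}$ is \emph{not} real, and the compatibility between the reality of $W_i$ and $W_{i-2}$ leaks out only after the isotropy of the first subspace is used, so the two isotropy conclusions must be proved in sequence rather than independently.
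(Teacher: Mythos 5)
Your proof is correct, and for part (ii) it takes a genuinely different route from the paper's. For (i) you invoke the involution $W\mapsto W^I$ of Remark \ref{rem:duality} together with the Segal/Uhlenbeck swap and the commutativity of the two steps; the paper instead computes $\ov{W_{i-2}}^{\perp}$ in one line directly from the definition of $W_{i-2}$. The two arguments are essentially the same fact packaged differently. For (ii) the paper argues structurally: it compares the given two-step filtration with the full Uhlenbeck filtration via Proposition \ref{pr:extreme} to obtain a covering relation between $\alpha$ and $\beta$, and then reads off both isotropy statements from the fact that the quadratic factor $\lambda^{-1}(\pi_{\beta}+\lambda\pi_{\beta}^{\perp})(\pi_{\alpha}+\lambda\pi_{\alpha}^{\perp})$ lies in $\Omega\O n$, by matching its extreme Laurent coefficients. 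You instead work with the Laurent coefficients of $\Phi_i$ itself, using the reality relation $\ov{T^i_k}=T^i_{i-k}$ and the identifications $\alpha=\Ima S_0^i$ and $\beta=\ker\wt T_{i-1}$ from Proposition \ref{prop:Seg-Uhl}, and reduce everything to the vanishing of the off-diagonal Laurent coefficients of $\Phi_i\Phi_i^{-1}=I$. Your version is more computational but self-contained (it does not need the covering property of the extreme filtrations) and makes explicit exactly which coefficient identities encode each isotropy statement; the paper's version is shorter and exhibits directly the reality of the quadratic factor, which is also the content of Theorem \ref{th:fact-SOn}(i). One small bookkeeping point: your expansion of $\ov{\wt T_{i-1}}\,\wt T_{i-1}^{*}$ has four terms, and after your three simplifications the term $T_0^iS_i^i$ survives alongside $T_0^iS_{i-1}^i+T_1^iS_i^i$; it is harmless, since it equals $T_0^i\pi_{\alpha}\pi_{\ov\alpha}S_i^i$ with $\pi_{\alpha}\pi_{\ov\alpha}=(\pi_{\ov\alpha}\pi_{\alpha})^{*}=0$ (or, alternatively, it is the $\lambda^{-i}$-coefficient of $\Phi_i\Phi_i^{-1}=I$), so no gap results.
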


\begin{proof} (i) This follows from: 
$$
\ov{W}_{i-2}^{\perp}=(\lambda\ov{W_i}^{\perp} +
\lambda\H_+)\cap\lambda^{3-i}\H_+
= (\lambda^{2-i}W_i \cap \lambda^{3-i}\H_+) + \lambda\H_+
	=\lambda^{3-i}W_{i-2}\,.
$$

(ii)(a) Let $W^U_{i-1}$ and $W^U_{i-2}$ be obtained from $W_i$ by one and two Uhlenbeck steps, respectively, and denote by $\gamma_i$ and $\gamma_{i-1}$ the corresponding Uhlenbeck subspaces.  Then $\gamma_i=\gamma$, and since by Proposition \ref{pr:extreme},
$W_{i-2}\subset W_{i-2}^U$, we obtain $(\pi_{\beta}+\lambda^{-1}\pi_{\beta}^{\perp})\H_+\subset (\pi_{\gamma_{i-1}}+\lambda^{-1}\pi_{\gamma_{i-1}}^{\perp})\H_+$\,, which implies that $\gamma_{i-1}\subset\beta$. Thus $\gamma=\pi_{\gamma}(\gamma_{i-1})=\pi_{\gamma}(\beta)$.

Since both $\Phi_i$ and $\Phi_{i-2}$ are real, the factor $\lambda^{-1}(\pi_{\beta}+\lambda\pi_{\beta}^{\perp})(\pi_{\gamma}+\lambda\pi_{\gamma}^{\perp})$ belongs to $\Omega\O n$. Hence $\pi_{\ov\beta}\pi_{\ov\gamma}=\pi_{\beta}^{\perp}\pi_{\beta}^{\perp}$; taking images we get 
$\beta^{\perp}=\pi_{\ov\beta}(\ov\gamma) \subset \ov\beta$, and hence $\beta^{\perp}$ is isotropic. On the other hand, taking the adjoint gives $\pi_{\ov\gamma}\pi_{\ov\beta}=\pi_{\beta}^{\perp}\pi_{\beta}^{\perp}$; hence $\ov\gamma=\pi_{\beta}^{\perp}(\beta^{\perp})\subset\gamma^{\perp}$, and thus $\gamma$ is also isotropic.
Part 	(ii)(b) is proved in a similar way. 
\end{proof}

Note that, if $W_i:M\to\Gr_i$ is an extended solution, so is $W_{i-2}:M\to\Gr_{i-2}$. Starting with any $W:M\to\Gr_r$  with $r \geq 2$, on performing alternate Uhlenbeck and Segal steps, we obtain the \emph{alternating filtration}
of Example \ref{ex:alternating}.
When we start with real $W$ of \emph{even} degree, iterating this process leads to a uniton factorization
(Definition \ref{def:uniton-fact}) of a real algebraic extended solution into real quadratic factors, as follows. 

\begin{theorem} \label{th:fact-SOn}
Let $r=2s$ where $s \in \nn$. Let $\Phi:M \to \Omega_r \U n^{\rn}$ be an extended solution. Then $\Phi$ has a uniton factorization$:$
$\Phi=\eta_1\cdots\eta_r$ with $\eta_i = \pi_{\alpha_i} + \lambda \pi_{\alpha_i}^{\perp}$ where
\begin{enumerate}
\item[(i)] each quadratic factor $\eta_{2\ell-1}\eta_{2\ell}$ has values in $\Omega_2\U n^{\rn}$ \ $(\ell=1,\dots,s);$

\item[(ii)] each `even' partial product $\eta_1\cdots\eta_{2j}$ has values in $\Omega_{2j}\U n^{\rn}$ \ $(j=1,\dots,s)$.
\end{enumerate}
Further, each $\alpha_i$ is given explicitly in terms of\/
 $W = \Phi\H_+$ by \eqref{alpha-L-S-U} as in Example \ref{ex:alternating}.
\end{theorem}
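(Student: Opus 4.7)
The plan is to build the factorization from the alternating filtration of Example \ref{ex:alternating}, using Proposition \ref{pr:real-properties}(i) to keep track of reality at every even stage.

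First, set $W = \Phi\H_+:M \to \Gr_r^{\rn}$ (reality is equivalent to $\Phi \in \Omega_r\U n^{\rn}$, as noted in the discussion preceding Proposition \ref{pr:real-properties}). Define the alternating filtration
$$
W = W_r \supset W_{r-1} \supset W_{r-2} \supset \cdots \supset W_1 \supset W_0 = \HH_+
$$
by starting, say, with an Uhlenbeck step to pass from $W_{2\ell}$ to $W_{2\ell-1}$, and then a Segal step to pass from $W_{2\ell-1}$ to $W_{2\ell-2}$, for each $\ell = s, s-1,\ldots, 1$. Each step is a $\lambda$-step, and each $W_i$ is an extended solution (Example \ref{ex:alternating}). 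Applying Proposition \ref{pr:real-properties}(i) inductively (downward on $\ell$), starting from $W_r \in \Gr_r^{\rn}$, we see that every even-index piece $W_{2\ell-2}$ is again real of degree $2\ell-2$. Since $W_{2j} = \Phi_{2j}\HH_+ \in \Gr_{2j}^{\rn}$ forces $\Phi_{2j} \in \Omega_{2j}\U n^{\rn}$, and since Proposition \ref{pr:factorizations} gives
$\Phi_{2j} = \eta_1\eta_2\cdots \eta_{2j}$ with $\eta_i = \pi_{\alpha_i}+\lambda\pi_{\alpha_i}^{\perp}$ for suitable subspaces $\alpha_i$, part (ii) is established.

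Part (i) now follows by a one-line algebraic check from (ii): for each $\ell = 1,\ldots, s$ we have
$\eta_{2\ell-1}\eta_{2\ell} = \Phi_{2\ell-2}^{-1}\Phi_{2\ell}$, a polynomial in $\lambda$ of degree at most $2$, and
$$
\ov{\eta_{2\ell-1}\eta_{2\ell}} = \ov{\Phi_{2\ell-2}^{-1}}\cdot\ov{\Phi_{2\ell}} = \lambda^{2\ell-2}\Phi_{2\ell-2}^{-1}\cdot\lambda^{-2\ell}\Phi_{2\ell} = \lambda^{-2}\eta_{2\ell-1}\eta_{2\ell},
$$
so $\eta_{2\ell-1}\eta_{2\ell} \in \Omega_2\U n^{\rn}$. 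Finally, the explicit formula for $\alpha_i$ as a span of meromorphic sections in the form \eqref{alpha-L-S-U} is exactly the specialization of Proposition \ref{pr:expl-S-U} to the alternating pattern with $u = [(r-i+1)/2]$ (recording that we start with an Uhlenbeck step), as already noted in Example \ref{ex:alternating}(i).

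The computational heart of the argument is already contained in Proposition \ref{pr:real-properties}(i); the only thing to watch is that the proposition must be applied at the level of subbundles of $\HH$ over $M$ rather than subspaces of $\H$, but the condition $\ov{W}^{\perp} = \lambda^{1-i}W$ is pointwise and the operations defining the alternating filtration are fibrewise, so no extra work is needed there. The only mild subtlety is cosmetic: one must fix a convention on whether the first step is Uhlenbeck or Segal, but the two choices produce factorizations of the same shape and the statement is symmetric under this swap.
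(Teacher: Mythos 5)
Your argument is correct and follows essentially the same route as the paper, which derives the theorem by iterating Proposition \ref{pr:real-properties}(i) along the alternating filtration (Uhlenbeck then Segal at each even stage) and reading off the factorization via Proposition \ref{pr:factorizations} and Example \ref{ex:alternating}. Your explicit check that $\ov{\eta_{2\ell-1}\eta_{2\ell}}=\lambda^{-2}\eta_{2\ell-1}\eta_{2\ell}$ is a clean way to deduce (i) from (ii), and is exactly what the paper's remark following the theorem asserts.
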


The hypothesis on $\Phi$ is equivalent to saying that $W$ is real of degree $r$, and this is equivalent to saying that
$\Psi=\lambda^{-s}\Phi:M \to \Omega\O n$ is an extended solution of the form \eqref{Phi-SOn} with $j=s$.
Conclusion (i) is equivalent to saying that each quadratic factor $\lambda^{-1}\eta_{2\ell-1}\eta_{2\ell}$ lies in
$\Omega\O n$ and is of the form \eqref{Phi-SOn} with $j=1$.
Conclusion (ii) is equivalent to saying that $\lambda^{-j}\eta_1\cdots\eta_{2j}:M\to\Omega\O n$ is an extended solution of the form \eqref{Phi-SOn}.

We now show how we can apply the theorem to find harmonic maps.

\begin{lemma} Let $\varphi:M\to\O n$ be a harmonic map of (minimal) uniton number at most $s$ as a map into $\U n$. Then $\varphi$ has an  associated extended solution $\Psi:M\to\Omega\O n$ of the form  $\eqref{Phi-SOn}$ with $j=s$. 
\end{lemma}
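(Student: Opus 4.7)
The plan is to take any polynomial extended solution of degree at most $s$ for $\varphi$, normalize it to the identity loop at a base point, and use the reality of $\varphi$ to show that this normalization automatically takes values in $\Omega\O n$.

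Since the minimal uniton number of $\varphi$ as a $\U n$-valued map is at most $s$, Uhlenbeck's theorem (as recalled in the remark after Proposition \ref{pr:normalized}) furnishes a polynomial associated extended solution $\wt\Phi:M\to\Omega_r\U n$ with $r\leq s$; one may take the type one extended solution, whose degree equals the minimal uniton number. Fix a base point $z_0\in M$ and set $\wt\Theta=\wt\Phi(z_0)^{-1}\wt\Phi$. Left multiplication by the constant loop $\wt\Phi(z_0)^{-1}$ preserves $A_z^\varphi$, so $\wt\Theta$ is again an extended solution associated to $\varphi$; moreover $\wt\Theta(z_0)$ is the constant identity loop, and $\wt\Theta$ is a Laurent polynomial in $\lambda$ whose powers range between $-r$ and $r$.

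The key step is to show that $\wt\Theta$ takes values in $\Omega\O n$. The hypothesis $\varphi(M)\subset\O n$ gives $\ov\varphi=\varphi$, whence $\ov{A_z^\varphi}=A_{\bar z}^\varphi$ directly from $A_z^\varphi=\tfrac12\varphi^{-1}\varphi_z$. Combined with the identity $\pa_z\ov{\wt\Theta}=\ov{\wt\Theta_{\bar z}}=\ov{\wt\Theta(1-\lambda)A_{\bar z}^\varphi}=\ov{\wt\Theta}(1-\lambda^{-1})A_z^\varphi$ (using $\ov{(1-\lambda)}=1-\lambda^{-1}$ on $S^1$), this yields
\[
\pa_z(\ov{\wt\Theta}\,\wt\Theta^{-1})
 =\ov{\wt\Theta}(1-\lambda^{-1})A_z^\varphi\wt\Theta^{-1}
  -\ov{\wt\Theta}(1-\lambda^{-1})A_z^\varphi\wt\Theta^{-1}=0,
\]
and the parallel computation gives $\pa_{\bar z}(\ov{\wt\Theta}\,\wt\Theta^{-1})=0$. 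Hence $\ov{\wt\Theta}\,\wt\Theta^{-1}$ is constant on $M$; evaluation at $z_0$ shows this constant is $I$, so $\ov{\wt\Theta}=\wt\Theta$ throughout $M$, meaning each matrix $\wt\Theta_\lambda(z)$ is real and thus lies in $\O n$.

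Setting $\Psi=\wt\Theta$ gives a Laurent expansion $\Psi=\sum_{\ell=-r}^{r}\lambda^\ell T_\ell$ with $T_{-\ell}=\ov{T_\ell}$ forced by the reality just established, and inserting zero coefficients for $r<|\ell|\leq s$ places $\Psi$ in the form \eqref{Phi-SOn} with $j=s$. The only substantive point is the vanishing calculation; the advantage of the base-point normalization $\wt\Phi(z_0)^{-1}\wt\Phi$ over $\wt\Phi$ itself is precisely that the value of the globally constant loop $\ov{\wt\Theta}\,\wt\Theta^{-1}$ at $z_0$ is immediately identified as the identity.
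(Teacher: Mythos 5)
Your proposal is correct and follows essentially the same route as the paper: normalize a polynomial associated extended solution by $\wt\Phi(z_0)^{-1}$ so that it equals $I$ at the base point, and deduce $\ov{\wt\Theta}=\wt\Theta$ from the reality of $\varphi$. The only difference is that where the paper simply invokes uniqueness of extended solutions with a given initial value, you carry out that uniqueness argument explicitly by showing $\pa_z(\ov{\wt\Theta}\,\wt\Theta^{-1})=\pa_{\zbar}(\ov{\wt\Theta}\,\wt\Theta^{-1})=0$, which is a correct (and self-contained) verification of the same step.
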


\begin{proof}
Let $\wt{\Phi}:M \to \Omega \U n$ be a polynomial extended solution of degree at most $s$ associated to $\varphi$.  Choose $z_0 \in M$ and 
set $\Psi = \wt{\Phi}(z_0)^{-1}\wt{\Phi}$.   Then $\Psi(z_0)=I=\ov\Psi(z_0)$, hence $\ov\Psi=\Psi$ by uniqueness of extended solutions. It follows that $\Psi$ is of the form \eqref{Phi-SOn}.  
\end{proof}

\subsection{Harmonic maps into real Grassmannians.}

The Cartan embedding \eqref{cartan} restricts to an identification of the union $G_*(\rn^n) = \cup_k G_k(\rn^n)$ of real Grassmannians with the totally geodesic submanifold $\{g\in\U n : g^2=I \text{ and } \ov g=g\}$ of $\U n$. Recall from \S \ref{subsec:cx-Grass} the involutions $\nu$ on $\Omega\U n$ and on $\Gr$, and their fixed point sets $\Omega\U n^{\nu}$ and $Gr^{\nu}$.  Clearly the first of these restricts to involutions on $\Omega\O n$ and on $\Omega_i\U n^{\rn}$ for any $i$; denote the corresponding fixed point sets by
$\Omega\O n^{\nu} = \Omega\O n \cap \Omega\U n^{\nu}$ and $\Omega_i\U n^{\nu,\rn} = \Omega_i\U n^{\rn} \cap \Omega\U n^{\nu}$. For any $i$, the involution $\nu$ restricts to $\Gr_i^{\rn}$, with fixed point set $\Gr_i^{\nu,\rn}=\Gr_i^{\nu}\cap\Gr_i^{\rn}$. For $\Phi\in\Omega\U n$, set $W=\Phi\H_+$\,. Then $\Phi\in\Omega_r\U n^{\nu,\rn}$ if and only if $W\in\Gr_r^{\nu,\rn}$; in this case, if $r=2s$, then $\Phi_{-1}$ is a harmonic map of uniton number at most $r$ into a real Grassmannian $G_k(\rn^n)$.
By Lemma \ref{le:Grass-preserve}, Theorem \ref{th:fact-SOn}
specializes to the following.

\begin{corollary} \label{cor:fact-real-Grass}
Let $\Phi:M \to \Omega_{2s}\U n^{\nu,\rn}$ be an extended solution. Then $\Phi$ has a uniton factorization:
$\Phi=\eta_1\cdots\eta_r$ with
$\eta_i = \pi_{\alpha_i} + \lambda \pi_{\alpha_i}^{\perp}$,
such that
 \begin{enumerate}
 \item[(i)] each quadratic factor $\eta_{2\ell-1}\eta_{2\ell}$ has values in $\Omega_2\U n^{\rn}$ \ $(\ell=1,\dots,s);$
 \item[(ii)] each partial product $\eta_1\dots\eta_i$ has values in $\Omega_i\U n^{\nu}$ \ $(i=1,\ldots, 2s);$
 \item[(iii)] each even partial product $\eta_1\cdots\eta_{2j}$ has values in
 $\Omega_{2j}\U n^{\nu,\rn}$ \ $(j=1,\dots,s)$.
\end{enumerate} 
Further, each $\alpha_i$ is given explicitly in terms of\/
 $W = \Phi\H_+$ by \eqref{alpha-L-S-U} as in Example \ref{ex:alternating}. 
\qed \end{corollary}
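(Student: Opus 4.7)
The plan is to apply Theorem \ref{th:fact-SOn} directly to obtain the factorization $\Phi = \eta_1\cdots\eta_{2s}$ together with property (i), and then to use Lemma \ref{le:Grass-preserve} to verify that the additional $\nu$-invariance is preserved all the way down the alternating filtration. Since $\Phi \in \Omega_{2s}\U n^{\nu,\rn} \subset \Omega_{2s}\U n^{\rn}$, the theorem produces factors $\eta_i = \pi_{\alpha_i} + \lambda\pi_{\alpha_i}^{\perp}$ whose subspaces $\alpha_i$ come from the alternating filtration of $W = \Phi\H_+$ and are given explicitly by \eqref{alpha-L-S-U} as in Example \ref{ex:alternating}. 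This takes care of conclusion (i) and of the explicit-formula claim.

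For (ii), I would show by descending induction on $i$ that each $W_i = \Phi_i\H_+$ in the alternating filtration lies in $\Gr_i^{\nu}$, equivalently that each partial product $\Phi_i$ lies in $\Omega_i\U n^{\nu}$. The base case $W_{2s} = W \in \Gr^{\nu}$ is the hypothesis on $\Phi$. For the induction step, $W_{i-1}$ is obtained from $W_i$ by a single Segal or Uhlenbeck step --- this is precisely how the alternating filtration is built --- and Lemma \ref{le:Grass-preserve}(ii) asserts that both kinds of step preserve $\nu$-invariance. The directional convention matches: the lemma passes from $W$ to the larger $\wt{W}$ of smaller polynomial degree, which is exactly the step $W_i \mapsto W_{i-1}$ here.

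Conclusion (iii) is then immediate. For each $j = 1,\ldots,s$, part (ii) of Theorem \ref{th:fact-SOn} gives $\Phi_{2j} \in \Omega_{2j}\U n^{\rn}$, while the induction just performed gives $\Phi_{2j} \in \Omega_{2j}\U n^{\nu}$; the intersection of these two conditions is $\Omega_{2j}\U n^{\nu,\rn}$ by definition.

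No substantial obstacle is expected --- the proof is a direct fusion of Theorem \ref{th:fact-SOn} with Lemma \ref{le:Grass-preserve}. The one point I would check carefully is that Lemma \ref{le:Grass-preserve}(ii) really does apply to each individual Segal or Uhlenbeck step in isolation, and not merely to the ``double steps'' of Proposition \ref{pr:real-properties}; the lemma is indeed stated for either step separately, so the inductive argument carries through without further effort.
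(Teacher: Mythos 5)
Your proposal is correct and is essentially the paper's own argument: the paper introduces the corollary with the single line ``By Lemma \ref{le:Grass-preserve}, Theorem \ref{th:fact-SOn} specializes to the following,'' which is precisely your fusion of the alternating factorization from Theorem \ref{th:fact-SOn} with the fact that Segal and Uhlenbeck steps each preserve $\nu$-invariance. Your careful check of the directional convention in Lemma \ref{le:Grass-preserve}(ii) and the observation that it applies to each single step (not only to the double steps of Proposition \ref{pr:real-properties}) are exactly the right points to verify, and they do go through.
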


To apply this to harmonic maps we need the following result.

\begin{lemma}
Let $\varphi:M \to G_k(\rn^n)$ be a harmonic map to a real Grassmannian with $n-k$ even, which is of finite uniton number as a map into $\U n$.
Then $\varphi$ has an extended solution
$\Psi:M \to \Omega\O n^{\nu}$ of the form \eqref{Phi-SOn} for some
$s \in \nn$, and with $\Psi_{-1} = \varphi$. 
\end{lemma}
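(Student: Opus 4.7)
The plan is to combine the $\nu$-invariant polynomial extended solution constructed in \S5.1 (which gives $\Psi_{-1}=\varphi$ on the nose) with a reality argument analogous to that of the preceding lemma, using the hypothesis $n-k$ even to replace the asymmetric factor of \S5.1 by a centered real orthogonal loop.

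First I would take the unique type one polynomial extended solution $\Phi_0 : M \to \Omega_{r_0}\U n$ of $\varphi$, where $r_0$ is the minimal uniton number, and let $V\in G_*(\cn^n)$ and $Q=\pi_V-\pi_V^\perp$ be given by Uhlenbeck's Lemma~15.1, so that $\Phi_{0,-1}=Q\varphi$. Exploiting $\ov\varphi=\varphi$ and the uniqueness of the type one (applied to the candidate $\lambda^{r_0}\ov{\Phi_0}$), I would deduce $\Phi_0\in\Omega_{r_0}\U n^\rn$, whence $Q$ is a real involution and $V$ the complexification of a real subspace. In place of the asymmetric factor $Q_\lambda = \pi_V + \lambda\pi_V^\perp$ used in \S5.1 (which does not preserve reality), I would use a \emph{centered} real orthogonal Laurent polynomial $R_\lambda$ of the form \eqref{Phi-SOn} acting as the identity on $V$, and on each $2$-dimensional real orthogonal block of $V^\perp$ as
\[
\tfrac{1}{2}(\lambda+\lambda^{-1})\,\mathrm{Id}\;+\;\tfrac{1}{2\ii}(\lambda-\lambda^{-1})\,J,
\]
with $J$ a fixed complex structure on the block. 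Such an $R_\lambda$ exists precisely because $\dim V^\perp$ is even (which is guaranteed by $n-k$ even), and a direct check gives $R_1=I$, $R_{-1}=Q$, $R\in\Omega\O n$, and the $\nu$-invariance $R_{-\lambda}R_{-1}^{-1}=R_\lambda$.

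Setting $\Psi := R_\lambda\cdot\lambda^{-r_0/2}\Phi_0$ and, if necessary, absorbing a sign by an additional $2$-dimensional block (or by starting from a slightly higher-degree polynomial extended solution), one obtains a centered Laurent polynomial of the form \eqref{Phi-SOn} lying in $\Omega\O n^\nu$ with
$\Psi_{-1}=R_{-1}\Phi_{0,-1}=Q\cdot Q\varphi = \varphi$.
The hard part will be establishing the reality $\Phi_0\in\Omega_{r_0}\U n^\rn$; the natural route is via uniqueness of the type one, but Remark~\ref{rem:duality} identifies only the involution $W\mapsto\lambda^{r-1}\ov{W}^\perp$, so a direct argument that $\lambda^{r_0}\ov{\Phi_0}$ remains type one (that is, has full $\delta_1$) will be needed, independent of that duality.
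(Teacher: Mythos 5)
Your route is genuinely different from the paper's, and it has a gap at its core --- the one you flag yourself. The paper never goes near the type one extended solution: it fixes a base point $z_0$, writes $\varphi(z_0)^{\perp}=\delta+\ov\delta$ with $\delta$ isotropic (this single pointwise step is where the hypothesis that $n-k$ is even enters), sets $Q^{\rn}_{\lambda}=\lambda^{-1}\pi_{\delta}+\pi_{(\delta+\ov\delta)}^{\perp}+\lambda\pi_{\ov\delta}\in\Omega\O n^{\nu}$, and invokes Remark \ref{rem:fi-uniton} to produce an algebraic extended solution $\Psi$ associated to $\varphi$ with $\Psi(z_0)=Q^{\rn}$. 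Since $\ov{\Psi}$ and $\nu(\Psi)$ are again extended solutions associated to $\varphi$ with the same initial value $Q^{\rn}$, uniqueness of extended solutions with prescribed initial condition forces $\Psi=\ov\Psi=\nu(\Psi)$, i.e.\ $\Psi$ is of the form \eqref{Phi-SOn} and $\nu$-invariant everywhere; evaluating $\Psi_{-1}=g\varphi$ at $z_0$ gives $g=I$. All the reality is obtained for free from one pointwise algebraic choice, with no analysis of any factorization and no sign ambiguity.

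The missing step in your argument is precisely the reality $\Phi_0\in\Omega_{r_0}\U n^{\rn}$ of the type one solution. Uniqueness of the type one solution can only be applied once you know that $\lambda^{r_0}\ov{\Phi_0}$ is itself type one, i.e.\ that $P_0(\lambda^{r_0}\ov{\Phi_0}\H_+)=\ov{\Ima T_{r_0}}$ is full, where $T_{r_0}$ is the top coefficient of $\Phi_0$. Under the involution of Remark \ref{rem:duality} the Segal filtration becomes the Uhlenbeck filtration, so fullness of $\delta_1=\Ima T_0$ for $\Phi_0$ translates into fullness of a quite different subbundle for the dual, and I see no way to deduce it from the type one hypothesis without already knowing the reality you are trying to prove. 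Everything downstream depends on this: the reality of $Q$ and hence of $V$ (which in any case needs $r_0$ even --- if $r_0$ is odd, reality of $\Phi_0$ gives $\ov Q=-Q$, the $\O{2m}/\U m$ situation), the meaning of $\lambda^{-r_0/2}$, the evenness of $\dim V^{\perp}$ required for your block construction of $R_{\lambda}$ (note that $\dim V^{\perp}$ is Uhlenbeck's subspace from her Lemma 15.1 and is not controlled by $n-k$), and the residual sign $\Psi_{-1}=(-1)^{r_0/2}\varphi$, which the lemma does not permit and which your proposed fix (an extra two-dimensional block) cannot always absorb. I would abandon the type one solution entirely and instead prescribe a real, $\nu$-invariant initial value as above.
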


Note that, if $n-k$ is odd, then we can embed $G_k(\rn^n)$ in $G_k(\rn^{n+1})$. 

\begin{proof}
Let $z_0\in M$ and write $\varphi(z_0)^{\perp}=\delta+\ov\delta$, where $\delta\subset\cn^n$ is an isotropic subspace. Set
$$
Q^{\rn}_{\lambda}=\lambda^{-1}\pi_{\delta} + \pi_{(\delta+\ov\delta)}^{\perp} + \lambda\pi_{\ov\delta}\in \Omega\O n^{\nu}\,;
$$
then $Q^{\rn}_{-1}=\varphi(z_0)$.  By Remark \ref{rem:fi-uniton}, there is an extended solution $\Psi:M\to\Omega_{\alg}\U n$ associated to $\varphi$ with initial condition $\Psi(z_0)=Q^{\rn}$. Since $\ov Q^{\rn}=Q^{\rn}$ and $Q_{\lambda}^{\rn}=Q_{-\lambda}^{\rn}(Q_{-1}^{\rn})^{-1}$, by uniqueness of extended solutions we see that $\Psi:M\to\Omega\O n^{\nu}$. Now $\Psi_{-1}= g\varphi$ for some $g \in \U n$; evaluating at $z_0$ shows that $g=I$, i.e., $\Psi_{-1}=\varphi$. 
\end{proof}
 
\subsection{Harmonic maps into the space of orthogonal complex structures} \label{subsec:ocs}

By an \emph{orthogonal complex structure on $\rn^{2m}$} we mean
an isometry $J$ of $\rn^{2m}$ with $J^2=-I$.  We can choose an 
orthonormal basis $\{e_1, \ldots, e_{2m}\}$ of $\rn^{2m}$ with $e_{2j} = Je_{2j-1}$ \ $(j=1,\ldots, m)$;
$J$ is called \emph{positive} if this basis is positively oriented.  This identifies the space of orthogonal complex structures with the Hermitian symmetric space $\O {2m}/\U m$ and the space of positive orthogonal complex structures with $\SO{2m}/\U m$.
Further, the space $\O {2m}/\U m$ can be identified with the space of maximal isotropic subspaces of $\cn^{2m}$; explicitly,  extend an orthogonal complex structure $J$ to $\cn^{2m}$ by complex-linearity, and identify it with its
$(-\ii)$-eigenspace (`$(0,1)$-space') $V$; in terms of a basis $\{e_i\}$ as above, this is $\spa\{e_{2j-1}+\ii e_{2j}: j=1,\ldots, m \}$.
This gives a totally geodesic embedding of $\O {2m}/\U m$ in $G_m(\cn^{2m})$; 
with the standard conventions this embedding is holomorphic.   Composing it with the Cartan embedding of $G_m(\cn^{2m})$ into $\U {2m}$ gives the totally geodesic embedding
$J \mapsto \pi_V - \pi_V^{\perp} = \pi_V - \pi_{\ov{V}} $ with image
$\{g\in\U{2m}: g^2=I \text{ and } \ov g=-g\}$. 

In \S \ref{subsec:SOn}, we started with $W \in \Gr$ which was real and of even degree.  Let us now consider a subspace $W$ which is real of \emph{odd} degree $r$. 

\begin{example}\label{ex:r=1real} Suppose that $r=1$, so that $W=V+\lambda\H_+$\,. Then
 $W$ is real if and only if $V^{\perp}=\ov V$, i.e., $V\subset\cn^n$ is maximal
 isotropic. In particular we must have $n=2m$ for some $m$.
Now, $W$ is an extended solution if and only if $V$ is holomorphic; it follows that
\emph{$W = \Phi\H_+$ is a real extended solution of degree one if and only if\/ $\Phi_{-1}$ is a holomorphic map into $\O{2m}/\U m$}. 
\end{example}

In general, if $W$ is real of odd degree $r$, then after $r-1$ alternate Uhlenbeck and Segal steps as described in \S \ref{subsec:SOn}, we are left with a space which is real of degree one, and hence $n$ must be even. We have proved the following result.

\begin{theorem}\label{th:fact-r-odd} Let $\Phi:M\to\Omega_r\U{n}^{\rn}$ be an extended solution for some odd $r = 2s+1$.
 Then $n=2m$ for some
$m \in \nn$ and $\Phi$ has a uniton factorization
(Definition \ref{def:uniton-fact})$:$ $\Phi=\eta_0\eta_1\cdots\eta_{r-1}$ with $\eta_i = \pi_{\alpha_{i+1}} + \lambda \pi_{\alpha_{i+1}}^{\perp}$ where 
\begin{itemize}
\item[(i)] $\alpha_1$ is maximal isotropic in $\CC^{2m}$ and defines a holomorphic map $M\to\O{2m}/\U m;$
\item[(ii)] each quadratic factor $\eta_{2\ell-1}\eta_{2\ell}$ has values in $\Omega_2\U n^{\rn}$
\ $(\ell=1,\dots,s);$
\item[(iii)] each even partial product $\eta_1\cdots\eta_{2j}$ has values in $\Omega_{2j}\U n^{\rn}$ \ $(j=1,\dots,s)$.
\end{itemize}
Each $\alpha_i$ is given explicitly in terms of\/ $W$ by \eqref{alpha-L-S-U} as in Example \ref{ex:alternating}.

Further, if\/ $\Phi:M\to\Omega_r\U{n}^{\nu,\rn}$, then
each partial product $\eta_0\dots\eta_i$ has values in $\Omega_i\U n^{\nu}$.
\qed \end{theorem}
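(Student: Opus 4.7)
The plan is to iterate Proposition \ref{pr:real-properties}(i) in reverse to construct a $\lambda$-filtration of $W = \Phi\H_+$ whose odd-indexed terms are all real.  Starting from $W_r = W \in \Gr_r^{\rn}$, I perform an Uhlenbeck step followed by a Segal step (the two commute, by \S\ref{subsec:extreme}) to obtain $W_{r-1}$ and then $W_{r-2}$, which Proposition \ref{pr:real-properties}(i) places in $\Gr_{r-2}^{\rn}$.  Iterating $s$ times yields a $\lambda$-filtration
\[
\H_+ = W_0 \subset W_1 \subset W_2 \subset \cdots \subset W_{r-1} \subset W_r = W,
\]
in which every $W_{2j+1}$ \ $(j=0,1,\ldots,s)$ is real while the even-indexed $W_{2\ell}$ need not be.  This is precisely the alternating filtration of Example \ref{ex:alternating}, and each $W_i$ is an extended solution since both Uhlenbeck and Segal steps preserve that property.

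At the bottom sits $W_1 \in \Gr_1^{\rn}$, and Example \ref{ex:r=1real} identifies this with the datum of a maximal isotropic subbundle $V$ of $\CC^n$ which is holomorphic, in particular forcing $n = 2m$.  The first uniton $\alpha_1 = V$ then yields the holomorphic map $M \to \O{2m}/\U m$ of part (i).  Proposition \ref{pr:factorizations} turns the filtration into the factorization $\Phi = \eta_0\eta_1\cdots\eta_{r-1}$, and Proposition \ref{pr:expl-S-U} applied with $u = [(r-i+1)/2]$ or $u = [(r-i)/2]$ (depending on whether the alternation begins with an Uhlenbeck or a Segal step) produces the explicit formula \eqref{alpha-L-S-U} for each $\alpha_i$, as in Example \ref{ex:alternating}.

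For (iii), I write $\Phi_{2j+1} = \eta_0\eta_1\cdots\eta_{2j}$.  The reality of $W_{2j+1}$ gives $\ov{\Phi_{2j+1}} = \lambda^{-(2j+1)}\Phi_{2j+1}$, while the maximal isotropy $\ov V = V^{\perp}$ gives $\ov{\eta_0} = \lambda^{-1}\eta_0$ and hence $\ov{\eta_0^{-1}} = \lambda\,\eta_0^{-1}$.  The direct computation
\[
\ov{\eta_1\cdots\eta_{2j}} = \ov{\eta_0^{-1}}\,\ov{\Phi_{2j+1}} = \lambda\,\eta_0^{-1}\cdot\lambda^{-(2j+1)}\Phi_{2j+1} = \lambda^{-2j}\,\eta_1\cdots\eta_{2j}
\]
places $\eta_1\cdots\eta_{2j}$ in $\Omega_{2j}\U n^{\rn}$, proving (iii); (ii) then follows by taking the ratio of consecutive instances of (iii), or equivalently by applying the same argument to the pair $\Phi_{2\ell-1}$, $\Phi_{2\ell+1}$.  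In the $\nu$-invariant case, Lemma \ref{le:Grass-preserve}(ii) ensures that Uhlenbeck and Segal steps preserve $\nu$-invariance, so each $W_i$ sits in $\Gr_i^{\nu}$ and every partial product $\Phi_i = \eta_0\cdots\eta_{i-1}$ is $\nu$-invariant.

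The proof is essentially an assembly: the substantive input is Proposition \ref{pr:real-properties}(i) for preservation of reality under alternating double steps, together with Example \ref{ex:r=1real} for the identification of the bottom of the filtration.  There is no genuinely hard step; the only delicate point is to keep the bookkeeping straight so that the alternating order chosen in the filtration is matched by the correct value of $u$ in \eqref{alpha-L-S-U}.
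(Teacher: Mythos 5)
Your proposal is correct and follows essentially the same route as the paper: iterate the Uhlenbeck-then-Segal double step of Proposition \ref{pr:real-properties}(i) down to a real degree-one space, identify that space via Example \ref{ex:r=1real} (forcing $n=2m$ and giving the holomorphic map to $\O{2m}/\U m$), and read off the explicit unitons from Example \ref{ex:alternating}. Your explicit verification of (ii) and (iii) from the reality relations $\ov{\Phi_{2j+1}}=\lambda^{-(2j+1)}\Phi_{2j+1}$ and $\ov{\eta_0}=\lambda^{-1}\eta_0$ is sound and merely fills in details the paper leaves implicit.
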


{}From the proof of Proposition \ref{pr:expl-S-U}, we see that the first uniton
$\alpha_1$ is given by $\alpha_1 =  P_0(\lambda^{-(r-1)/2} W \cap \HH_+) = \delta_{(r+1)/2}$ where
$\sum_{i=0}^{r-1}\lambda^i \delta_{i+1}+\lambda^r\HH_+$ is the $S^1$-invariant limit of $W$
(see Example \ref{ex:S1-limit}).

To apply this to harmonic maps, note that if $\Phi:M\to \Omega_r\U n^{\rn}$ is an extended solution with $r$ odd, then $\ii\Phi_{-1}$ has values in $\O n$. If, additionally, $\Phi:M\to\Omega_r\U n^{\nu,\rn}$, then $\Phi_{-1}$ has values in $\O{2m}/\U m\subset\U{2m}$. We give a converse.

\begin{lemma}
Let $\varphi:M\to\O{2m}/\U m\subset\U{2m}$ be a harmonic map of finite uniton number. Then there is an extended solution $\Phi:M\to\Omega_r\U{2m}^{\nu, \rn}$ with $r$ odd and $\Phi_{-1}=\pm\varphi$.
\end{lemma}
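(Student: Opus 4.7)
The plan is to produce $\Phi$ by first building an algebraic extended solution $\Theta$ from a carefully chosen initial value at a base point, and then shifting it by a power of $\lambda$ to make it polynomial of odd degree. Fix $z_0 \in M$ and let $V_0 \subset \cn^{2m}$ be the maximal isotropic subspace corresponding to $\varphi(z_0)\in\O{2m}/\U m$, so that $\varphi(z_0) = \pi_{V_0} - \pi_{\ov{V_0}}$ and $V_0 \oplus \ov{V_0} = \cn^{2m}$ orthogonally. Set $Q_\lambda = \pi_{V_0} + \lambda\pi_{\ov{V_0}}$. A direct check shows $Q \in \Omega_1\U{2m}^{\nu,\rn}$: it is polynomial of degree one; $\ov Q = \lambda^{-1}Q$ on $S^1$ by interchanging $V_0$ and $\ov{V_0}$; and $\nu(Q) = Q$, a consequence of $(\pi_{V_0}-\pi_{\ov{V_0}})^2 = I$. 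Moreover $Q_{-1} = \varphi(z_0)$.

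By Remark \ref{rem:fi-uniton}, there exists a unique algebraic extended solution $\Theta:M \to \Omega_{\alg}\U{2m}$ associated to $\varphi$ with $\Theta(z_0) = Q$; uniqueness for the ODE $g^{-1}g_z = 2A^\varphi_z$ then forces $\Theta_{-1} = \varphi$. I would next transfer both symmetries of $Q$ to $\Theta$ by uniqueness of extended solutions. For $\nu$-invariance, a short computation (using that $\varphi^2 = I$ implies $\varphi$ anticommutes with $A^\varphi_z$) shows that $\nu\Theta$ is again an extended solution associated to $\varphi$; since $\nu Q = Q$, uniqueness forces $\nu\Theta = \Theta$. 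For the reality, consider the involution $\tau$ on loops defined on $S^1$ by $\tau\Phi(\lambda) = \lambda\,\ov{\Phi(\lambda)}$. Using $\partial_z\ov\Psi = \ov{\Psi_{\bar z}}$ and the anti-holomorphic companion identity $\Phi^{-1}\Phi_{\bar z} = (1-\lambda)A^\varphi_{\bar z}$, one computes
\[
(\tau\Theta)^{-1}(\tau\Theta)_z = (1-\lambda^{-1})A^{\ov\varphi}_z.
\]
The hypothesis $\ov\varphi = -\varphi$ (the defining relation for $\O{2m}/\U m \subset \U{2m}$) gives $A^{\ov\varphi}_z = A^\varphi_z$, so $\tau\Theta$ is again associated to $\varphi$; since $\tau Q = \lambda\ov Q = Q$, uniqueness yields $\tau\Theta = \Theta$, i.e., $\ov\Theta = \lambda^{-1}\Theta$.

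Writing $\Theta = \sum_k T_k\lambda^k$ as a finite Laurent series, the identity $\ov\Theta = \lambda^{-1}\Theta$ reads $\ov{T_k} = T_{1-k}$, so the finite support of $\{T_k\}$ is invariant under $k \leftrightarrow 1-k$ and hence lies in some interval $[s,\,1-s]$ with $s \leq 0$. Setting $\Phi_\lambda = \lambda^{-s}\Theta_\lambda$ produces a genuine polynomial of odd degree $r = 1-2s$; the symmetries above propagate directly to give $\nu\Phi = \Phi$ and $\ov\Phi = \lambda^{-r}\Phi$, so $\Phi \in \Omega_r\U{2m}^{\nu,\rn}$, while $\Phi_{-1} = (-1)^{-s}\varphi = \pm\varphi$ as required. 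The main obstacle is the reality step: one must verify that the involution $\tau$ preserves the class of extended solutions associated to $\varphi$, which relies essentially on the hypothesis $\ov\varphi = -\varphi$ — the feature distinguishing $\O{2m}/\U m$ from an arbitrary complex Grassmannian, and without which the argument would not force the right (odd) degree structure.
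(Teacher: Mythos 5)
Your proposal is correct and follows essentially the same route as the paper: the same initial loop $Q_{\lambda}=\pi_{V_0}+\lambda\pi_{\ov{V_0}}$ at a base point, the same appeal to Remark \ref{rem:fi-uniton} and uniqueness of extended solutions to propagate the $\nu$-invariance and the reality $\ov\Theta=\lambda^{-1}\Theta$, and the same Laurent-coefficient symmetry $\ov{T_k}=T_{1-k}$ forcing odd degree after multiplying by a power of $\lambda$. The only difference is that you write out explicitly the computations (using $\varphi^2=I$ and $\ov\varphi=-\varphi$) showing that $\nu\Theta$ and $\lambda\ov\Theta$ are again extended solutions associated to $\varphi$, which the paper compresses into ``by uniqueness''.
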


\begin{proof}
Let $z_0\in M$ and write $\varphi(z_0)=Y$ so that 
$Y$ is a maximal isotropic subspace of $\cn^{2m}$.
Then $Q_{\lambda}=\pi_Y+\lambda\pi_{\ov{Y}}\in\Omega\U{2m}^{\nu}$ satisfies $Q_{-1}=\varphi(z_0)$.
By Remark \ref{rem:fi-uniton}, there is an extended solution $\Psi:M\to\Omega_{\alg}\U n$ associated to $\varphi$ with $\Psi(z_0)=Q$. 
From $\ov Q_{\lambda}=\lambda^{-1}Q_{\lambda}$ and $Q_{\lambda}\in\Omega\U{2m}^{\nu}$, by uniqueness we see that $\Psi:M\to\Omega\U{2m}^{\nu}$. 
Thus, if $\Psi=\sum_{\ell=-s}^t\lambda^{\ell} T_{\ell}$ with $T_{-s}, T_t\neq 0$, then from $\ov\Psi=\lambda^{-1}\Psi$ we see that $s=t-1$. Hence $\Phi=\lambda^s\Psi:M\to\Omega_{2s-1}\U{2m}^{\nu,\rn}$ is an extended solution with $\Phi_{-1}=\pm\varphi$.
\end{proof}

Putting together the results of the last three subsections we obtain the following.

\begin{corollary} \label{cor:fact-real}
Let $W$ be an extended solution which is real of some degree $r$.  Define subbundles $\alpha_i$ by \eqref{alpha-L-S-U}
as in Example \ref{ex:alternating}, and $\varphi:M \to \O n$ by \eqref{phi-fact}.   Then $\varphi$ is a harmonic map of uniton number at most $r$ and every such harmonic map is given this way.  If\/ $W$ is closed under $\nu$, then $\varphi$ has image in
a real Grassmannian ($r$ even) or $\O{2m}/\U{m}$ ($r$ odd), and every harmonic map of finite uniton number from a surface to these spaces is obtained in this way.
\end{corollary}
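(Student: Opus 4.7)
The plan is to assemble this corollary from the three subsections of Section \ref{sec:real}, invoking for each case the factorization theorem that supplies the forward direction together with the existence lemma that supplies the converse. In the forward direction I would split on the parity of $r$ and the presence of $\nu$-invariance. If $W = \Phi\HH_+$ is real of even degree $r = 2s$, Theorem \ref{th:fact-SOn} produces a uniton factorization $\Phi = \eta_1\cdots\eta_r$ whose subbundles $\alpha_i$, via the identification in Example \ref{ex:alternating}, are exactly those given by \eqref{alpha-L-S-U}; since $\Psi = \lambda^{-s}\Phi \in \Omega\O n$ is an extended solution of the form \eqref{Phi-SOn}, the map $\Phi_{-1} = (-1)^s \Psi_{-1}$ lies in $\O n$, is harmonic, and has uniton number at most $r$. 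If in addition $W$ is $\nu$-closed, Corollary \ref{cor:fact-real-Grass} guarantees that each partial product also lies in $\Omega\U n^{\nu}$, so $\Phi_{-1}^2 = I$ and $\Phi_{-1}$ takes values in a real Grassmannian.

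If instead $r = 2s+1$ is odd, Theorem \ref{th:fact-r-odd} forces $n = 2m$ and supplies the analogous factorization. Evaluating the realness condition $\bar{\Phi} = \lambda^{-r}\Phi$ at $\lambda = -1$ gives $\overline{\Phi_{-1}} = -\Phi_{-1}$, from which a direct computation shows $\ii\Phi_{-1} \in \O{2m}$; if $W$ is also $\nu$-closed, the first uniton $\alpha_1$ is maximal isotropic and $\Phi_{-1}$ is (up to a sign) the Cartan embedding of a map into $\O{2m}/\U m$.

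For the converse, given a harmonic map $\varphi$ of finite uniton number into one of the three target spaces, I would invoke the relevant existence lemma already established: the first lemma of \S\ref{subsec:SOn} for $\varphi:M\to\O n$, the lemma following Corollary \ref{cor:fact-real-Grass} for $\varphi:M\to G_k(\rn^n)$ (passing to $G_k(\rn^{n+1})$ in the odd-codimension case), and the last lemma of \S\ref{subsec:ocs} for $\varphi:M\to\O{2m}/\U m$. Each produces a (possibly $\nu$-invariant) real extended solution $\Psi$ of the appropriate form whose $\lambda=-1$ evaluation recovers $\varphi$ up to a sign or factor of $\ii$. Multiplying by the correct power of $\lambda$ yields $\Phi \in \Omega_r\U n^{\rn}$ whose Grassmannian model $W = \Phi\HH_+$ is real of degree $r$, and then \eqref{alpha-L-S-U} reproduces the $\alpha_i$ of $\Phi$ by Example \ref{ex:alternating}.

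The one point that requires verification is that the subbundles $\alpha_i$ computed algorithmically from $W$ via \eqref{alpha-L-S-U} coincide with those in the uniton factorizations of the preceding theorems; this is exactly the content of Proposition \ref{pr:expl-S-U} applied to the alternating filtration, so no new argument is needed. The remaining work is essentially bookkeeping: tracking which sign or $\ii$-factor appears, which parity of $r$ pairs with which target, and — via Proposition \ref{pr:real-properties} and Lemma \ref{le:Grass-preserve} — checking that the alternating steps preserve realness and $\nu$-invariance at every stage. I do not anticipate a genuine obstacle; the substantive analysis has been carried out in the earlier results, and the corollary is the cumulative statement packaged for use in applications.
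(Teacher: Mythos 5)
Your proposal is correct and matches the paper's intent exactly: the paper offers no separate argument beyond the sentence ``Putting together the results of the last three subsections we obtain the following,'' and your assembly of Theorem \ref{th:fact-SOn}, Corollary \ref{cor:fact-real-Grass}, Theorem \ref{th:fact-r-odd} and the three accompanying existence lemmas, with the $\alpha_i$ identified via Proposition \ref{pr:expl-S-U} as in Example \ref{ex:alternating}, is precisely that assembly. The sign/$\ii$-factor bookkeeping you flag is the only subtlety, and you handle it as the paper does.
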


We discuss how to find real extended solutions $W$ in \S \ref{subsec:real-ex}. 

\subsection{Real $S^1$-invariant solutions}
\label{subsec:s1invariant}

On $\H_+/\lambda^r \H_+
=\cn^n+\lambda\cn^n+\dots+\lambda^{r-1}\cn^n\cong\cn^{nr}$, define the bilinear form 
$$
\ip{v}{w}_s=\sum_{k=0}^{r-1}\ip{v_k}{w_{r-k-1}}_{\cn},
$$
where $v=v_0+\lambda v_1+\dots+\lambda^{r-1}v_{r-1}$, $w=w_0+\lambda w_1+\dots+\lambda^{r-1}w_{r-1}$, and 
$\ip{\cdot}{\cdot}_{\cn}$ denotes the standard complex symmetric bilinear form on $\cn^n$.  Note that $\ip{v}{w}_s$ gives the
$L^2$-inner product of $\lambda^{1-r}v$ and $\ov{w}$.
For any subspace $V\subset \H_+/\lambda^r\H_+$\,, write  
$
V^{\perp_s}=\{v\in Z\ |\ \ip{v}{w}_s=0 \text{ for all } w\in V\},
$
then the following is immediate.
 
\begin{lemma} \label{le:real}
Let $W=V+\lambda^r\H_+\in\Gr_r$, where $V\subset \H_+/\lambda^r \H_+$\,. Then $W\in\Gr_r^{\rn}$ if and only if\/ $V^{\perp_s}=V$.
In this case, $2\dim V=nr$ so that $nr$ is even.
\qed \end{lemma}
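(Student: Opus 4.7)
The plan is to unwind both sides of the equivalence in terms of Fourier components and reduce everything to the identity $\ip{v}{w}_s=\ip{\lambda^{1-r}v}{\ov w}_{\H}$ that was recorded immediately before the statement.

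First I would set $Z = \bigoplus_{k=-(r-1)}^{0}\lambda^k\cn^n \subset \H$, so that $\H$ decomposes orthogonally as $\lambda^{-r}\H_+ \oplus Z \oplus \lambda\H_+$ viewed via Fourier expansion, with $\H_+/\lambda^r\H_+$ naturally identified with both $\H_+ \ominus \lambda^r\H_+$ (in nonnegative Fourier degrees) and, via multiplication by $\lambda^{1-r}$, with $Z$ (in degrees $1-r,\dots,0$). Since $W = V + \lambda^r\H_+$ and $\ov{\lambda^r\H_+} = \lambda^{-r}\H_-$ lies in Fourier degrees $\leq -r$, while $\ov V$ sits in degrees $-(r-1),\dots,0$, I get the orthogonal decomposition
\[
\ov W \;=\; \ov V \;\oplus\; \ov{\lambda^r\H_+}.
\]
Taking $L^2$-orthogonal complements in $\H$ then yields
\[
\ov W^{\perp} \;=\; \bigl(\ov V^{\perp}\cap Z\bigr)\;\oplus\;\lambda\H_+,
\]
because orthogonality to $\ov{\lambda^r\H_+}$ forces the Fourier degrees $\leq -r$ to vanish, confining $\ov W^{\perp}$ to $\lambda^{1-r}\H_+ = Z \oplus \lambda\H_+$. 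On the other hand, $\lambda^{1-r}W = \lambda^{1-r}V + \lambda\H_+$, and since $\lambda^{1-r}V \subset Z$, this sum is also orthogonal.

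Comparing the two decompositions, the condition $\ov W^{\perp}=\lambda^{1-r}W$ is equivalent to the single equality $\ov V^{\perp}\cap Z = \lambda^{1-r}V$ inside $Z$. To translate this into $V^{\perp_s}=V$, I use the identity stated just before the lemma: for any $v,w$ in $\H_+/\lambda^r\H_+$, we have $\ip{v}{w}_s = \ip{\lambda^{1-r}v}{\ov w}_{\H}$. This shows that the map $v \mapsto \lambda^{1-r}v$ carries $V^{\perp_s}$ isomorphically onto $\ov V^{\perp}\cap Z$ (both sides being the annihilator of $V$ under the pairing induced by the $L^2$-inner product between $Z$ and $\ov{(\H_+/\lambda^r\H_+)}$). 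Hence $\ov V^{\perp}\cap Z = \lambda^{1-r}V$ if and only if $V^{\perp_s}=V$, giving the desired equivalence.

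For the dimension statement, the bilinear form $\ip{\,\cdot\,}{\,\cdot\,}_s$ on $\H_+/\lambda^r\H_+\cong \cn^{nr}$ is visibly non-degenerate, because the matrix $\ip{v_k}{w_{r-1-k}}_{\cn}$ pairs the $k$-th slot with the $(r-1-k)$-th slot via the standard symmetric bilinear form on $\cn^n$, which is itself non-degenerate. Therefore $\dim V^{\perp_s} = nr - \dim V$, and $V = V^{\perp_s}$ forces $2\dim V = nr$.

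I do not expect any real obstacle beyond being careful with Fourier degrees: the only subtlety is noting that $\lambda\H_+$ is automatically contained in $\ov W^{\perp}$ and in $\lambda^{1-r}W$, so the whole question localizes to the finite-dimensional slice $Z$, where the bilinear form $\ip{\,\cdot\,}{\,\cdot\,}_s$ is simply the pullback of the $L^2$-pairing on $\H$ composed with complex conjugation, and non-degeneracy is immediate.
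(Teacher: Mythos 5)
Your proof is correct and is precisely the verification the paper leaves as ``immediate'': you reduce the reality condition $\ov{W}^{\perp}=\lambda^{1-r}W$ to an equality in the finite-dimensional slice of Fourier degrees $1-r,\dots,0$, translate it via the identity $\ip{v}{w}_s=\ip{\lambda^{1-r}v}{\ov{w}}_{L^2}$ recorded just before the lemma, and obtain the dimension statement from non-degeneracy of $\ip{\cdot}{\cdot}_s$ exactly as the paper indicates. The only slip is notational: in your first decomposition of $\H$ the summand written as $\lambda^{-r}\H_+$ should be the span of the Fourier modes of degree $\leq -r$, i.e.\ $\ov{\lambda^{r}\H_+}$, which is what your subsequent computation in fact uses.
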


Note that the last statement follows from $\dim V^{\perp_s}=nr-\dim V$.
 
Now let $\Phi:M\to\Omega_r\U n$ be an extended solution and set $W=\Phi\H_+:M\to\Gr_r$\,. {}From Propositions \ref{pr:alg-iso} and \ref{pr:S1-invt}
and Lemma \ref{le:real}, $\Phi$ is $S^1$-invariant if and only if
$W =  V + \lambda^r \HH_+$ where
$V = \sum_{i=0}^{r-1} \lambda^i\delta_{i+1} $
for some superhorizontal sequence
(Definition \ref{def:superhor}):
\begin{equation} \label{superhor}
\ul{0} = \delta_0 \subset \delta_1 \subset \cdots \subset \delta_r \subset \delta_{r+1} = \CC^n.
\end{equation} 
As before, the $\delta_i$ are the Segal unitons for $\Phi$. 
Call this sequence \emph{real (of degree $r$)} if
$\delta_i^{\,\perp_{\cn}} = \delta_{r-i+1}$ for all $i$;  here ${}^{\perp_{\cn}}$ denotes orthogonal complement with respect to $\ip{\cdot}{\cdot}_{\cn}$.
On setting $\zeta_i = \delta_{i}^{\perp} \cap \delta_{i+1}$ \ $(i=0,\ldots, r)$ as in Proposition \ref{pr:S1-invt}, this is equivalent to $\zeta_i = \ov{\zeta_{r-i}}$\,.
Note that the sequence $(\delta_i)$ is real
if and only if $W$ is real. Furthermore, if $W:M\to\Gr_r^{\rn}$ is an arbitrary real extended solution, then its $S^1$-invariant limit (Example \ref{ex:S1-limit}) is also real. 

\begin{proposition} Let $\Phi:M\to\Omega_r\U n^{\rn}$ be an $S^1$-invariant extended solution and consider the corresponding superhorizontal sequence \eqref{superhor}.

{\rm (i)} The $\eta_i$ in Theorems \ref{th:fact-SOn} and \ref{th:fact-r-odd} all commute and
$$
\lambda^{-1}\eta_{2\ell-1}\eta_{2\ell}=\lambda^{-1}\pi_{\delta_{s-\ell+1}}+\pi_{(\delta_{s-\ell+1}+\ov{\delta_{s-\ell+1}}\,)}^{\perp}+\lambda\pi_{\ov{\delta_{s-\ell+1}}}\qquad(\ell=1,\dots,s),
$$
where $s=[r/2]$. Additionally, if $r$ is odd, $\eta_0 = \pi_V + \lambda\pi_V^{\perp}$ where $V = \delta_{(r+1)/2}$. 

{\rm (ii)} Suppose that $r=2s$ and set
$\varphi = \Phi_{-1}$.  Then $\varphi$ is given by 
$$
\varphi=\zeta_s\oplus\sum_{k=0}^{s/2-1}\zeta_{2k}\oplus\ov{\zeta_{2k}} \text{  $($$s$ even$)$; } \varphi=\sum_{k=0}^{(s-1)/2}\zeta_{2k}\oplus\ov{\zeta_{2k}} \text{  $($$s$ odd$)$};
$$
the sum is the direct sum of harmonic maps into $G_{2j}(\rn^n)$ for various $j$.
\qed \end{proposition}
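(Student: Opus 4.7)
The plan is to identify the unitons $\alpha_i$ of the alternating factorization explicitly, using the fact that for $S^1$-invariant $W$ every step preserves $S^1$-invariance (Corollary \ref{co:S1-preserve}), and then expand the resulting quadratic factors using the nested structure of the $\delta_i$ together with the reality relation $\ov{\delta_i}{}^{\perp}=\delta_{r-i+1}$.

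First I would compute the Segal unitons $\delta^{(i)}_1\subset\cdots\subset\delta^{(i)}_i$ of each intermediate $W_i$ in the alternating filtration. Starting from $W=\sum_{k=0}^{r-1}\lambda^k\delta_{k+1}+\lambda^r\HH_+$, a direct induction using $W_{i-1}^U=\lambda^{-1}W_i\cap\HH_+$ and $W_{i-1}^S=W_i+\lambda^{i-1}\HH_+$ yields $\delta^{(i)}_j=\delta_{j+\lceil(r-i)/2\rceil}$. Proposition \ref{prop:Seg-Uhl} then identifies the uniton of each step: an Uhlenbeck step at level $i$ gives $\alpha_i=\Ima(S_0^i)=\delta^{(i)}_1$, while a Segal step gives $\alpha_i=\ker(T_i^i)=\delta^{(i)}_i$. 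In the even case $r=2s$ this produces $\alpha_{2\ell-1}=\delta_{s+\ell}$ and $\alpha_{2\ell}=\delta_{s-\ell+1}$; in the odd case $r=2s+1$ the analogous shift gives the same pairing together with $\alpha_1=\delta_{s+1}$, which is maximal isotropic because reality at the middle index forces $\ov{\delta_{s+1}}=\delta_{s+1}^{\perp}$. Since all $\alpha_i$ belong to the nested chain, the projections $\pi_{\alpha_i}$ commute pairwise, hence so do the $\eta_i$.

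Next I would expand each quadratic factor using the elementary identity $(\pi_\alpha+\lambda\pi_\alpha^{\perp})(\pi_\beta+\lambda\pi_\beta^{\perp})=\pi_\beta+\lambda\,\pi_{\alpha\ominus\beta}+\lambda^2\pi_{\alpha^{\perp}}$ valid whenever $\beta\subset\alpha$, applied to $\alpha=\delta_{s+\ell}$, $\beta=\delta_{s-\ell+1}$. Reality gives $\delta_{s+\ell}^{\perp}=\ov{\delta_{s-\ell+1}}$ and $\delta_{s+\ell}\ominus\delta_{s-\ell+1}=(\delta_{s-\ell+1}+\ov{\delta_{s-\ell+1}})^{\perp}$, which upon dividing by $\lambda$ yields precisely the stated formula for $\lambda^{-1}\eta_{2\ell-1}\eta_{2\ell}$.

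For part (ii), setting $\lambda=-1$ shows that each quadratic factor becomes the Cartan reflection $\pi_{V_\ell}-\pi_{V_\ell}^{\perp}$ with $V_\ell=\delta_{s-\ell+1}\oplus\ov{\delta_{s-\ell+1}}$. These subspaces form a nested chain $V_s\subset\cdots\subset V_1$, so their reflections commute and their product is the Cartan reflection of the $+1$-eigenspace; computing the latter via the orthogonal decomposition with graded pieces $V_1^{\perp}=\zeta_s$, $V_k\ominus V_{k+1}=\zeta_{s-k}\oplus\ov{\zeta_{s-k}}$, $V_s=\zeta_0\oplus\ov{\zeta_0}$ produces the claimed expressions in the two parities of $s$. (Equivalently, one may appeal directly to the formula $\varphi=\sum_{k=0}^{[r/2]}\zeta_{2k}$ of Proposition \ref{pr:S1-invt}(iii) and reorganize it into conjugate pairs using $\ov{\zeta_i}=\zeta_{r-i}$.) The main obstacle is the careful index bookkeeping for $\delta^{(i)}_j$ through the alternation; once this is in place, the remaining algebra of projections onto a nested chain is routine.
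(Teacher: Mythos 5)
The paper states this proposition with no proof (it ends in \verb|\qed| immediately), so there is nothing to compare against; what matters is whether your argument is sound, and it is. Your identification of the intermediate Segal unitons, $\delta^{(i)}_j=\delta_{j+\lceil(r-i)/2\rceil}$, checks out by direct computation on $W_i$, and the use of Corollary \ref{co:S1-preserve} to keep every $W_i$ $S^1$-invariant is exactly what is needed to make Proposition \ref{prop:Seg-Uhl} yield $\Ima(S_0)=\delta^{(i)}_1$ and $\ker(T_i)=\delta^{(i)}_i$ (since for nested unitons $T_0=\pi_{\delta^{(i)}_1}$ and $T_i=\pi_{\delta^{(i)}_i}^{\perp}$). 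The quadratic-factor expansion with $\beta\subset\alpha$ and the reality relation $\ov{\delta_i}^{\perp}=\delta_{r-i+1}$ then give the displayed formula, and the eigenspace bookkeeping for the product of nested Cartan reflections in part (ii) is correct in both parities of $s$ (and agrees, as you note, with reorganizing $\varphi=\sum\zeta_{2k}$ from Proposition \ref{pr:S1-invt}(iii) into conjugate pairs via $\ov{\zeta_i}=\zeta_{r-i}$). One small imprecision: in the odd case $r=2s+1$ the indexing of Theorem \ref{th:fact-r-odd} is $\eta_i=\pi_{\alpha_{i+1}}+\lambda\pi_{\alpha_{i+1}}^{\perp}$, so the quadratic factor $\eta_{2\ell-1}\eta_{2\ell}$ actually pairs $\alpha_{2\ell}=\delta_{s+\ell+1}$ with $\alpha_{2\ell+1}=\delta_{s-\ell+1}$, not $\delta_{s+\ell}$ with $\delta_{s-\ell+1}$; the displayed formula is unaffected because reality sends $\delta_{s-\ell+1}$ to $\delta_{r-(s-\ell+1)+1}=\delta_{s+\ell+1}$ in that case, but it is worth stating the shift explicitly rather than calling it ``the same pairing.''
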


We now discuss how to find real superhorizontal sequences.
Let $f:M \to G_k(\cn^n)$ be a holomorphic map.  Let $f_{(i)}$ denote its $i$'th associated curve
(Definition \ref{def:assoc-curve}), and $G^{(i)}(f)$ its $i$'th $\pa'$-Gauss transform (Example \ref{ex:Gauss-filtr});
Note that $G^{(i)}(f) = f_{(i-1)}^{\perp} \cap f_{(i)}$ (where we set $f_{(-1)}$ equal to the zero bundle).

Say that $f:M \to G_k(\cn^n)$ (or the corresponding holomorphic subbundle of $\CC^n$) has \emph{isotropy order} $t \in \{-1,0,1,\ldots\}$
if $\ip{f_{(i)}}{f_{(j)} }_{\cn}= 0$ for $0 \leq i+j \leq t$ and $\ip{f_{(i)}}{f_{(j)}}_{\cn}\neq0$ for $i+j=t+1$; if there is no such number $t$, we say that $f$ is of \emph{infinite isotropy order} or \emph{(strongly) isotropic}
\cite{erdem-wood}. For a map
$f:M \to \CP{n-1}$ we have 
$t=2s+1$ with $s \leq \bigl[(n-3)/2 \bigr]$,
see \cite{bahy-wood-G2}.  To say that  $f$ has isotropy order at least $t$ is equivalent to saying that $f_{(s)}$ is
an isotropic subbundle of $\CC^n$.  A full holomorphic map
$f:M \to \CP{n-1}$ is called \emph{totally isotropic} if it has the maximum possible finite isotropy order; it follows that $n$ is odd and $f$ has isotropy order $n-2$.  Equivalently, a full holomorphic map is totally isotropic if
its $(n-1)$st $\pa'$-Gauss transform $G^{(n-1)}(f)$
is equal to $\ov{f}$.   When $M = S^2$, all holomorphic maps of finite isotropy order are given by an algorithm in \cite{bahy-wood-G2}, see (iv) below.

\begin{example} \label{ex:sharp}
(i) When $n$ is odd, and $f:M^2 \to \CP {n-1}$ is full and totally isotropic, then $\delta_i = f_{(i-1)}$ \ $(i=1,\ldots, n-1)$ defines a real superhorizontal sequence of degree $n-1$ with
$\zeta_i = G^{(i)}(f)$.
A nice example is the Veronese map \cite{bolton-jensen-rigoli-woodward}, where all the $\zeta_i$ define minimal surfaces of constant Gauss curvature in $\CP {n-1}$ with $\zeta_{(n-1)/2}$ in $\RP {n-1}$.

(ii) When $n=2m$, let $Y$ be a constant maximal isotropic subspace of $\cn^n$ and let
$\ul 0=\delta_0\subset\delta_1\subset\dots\subset\delta_{s-1}\subset\delta_s=\ul Y$ be any superhorizontal sequence in $\ul Y$. 
For $i=1,\dots s$, set $\delta_{s+i}=\delta_{s-i}^{\perp_{\cn}}$\,. Then $(\delta_i)_{0\leq i\leq 2s}$, is a real superhorizontal sequence in $\CC^n$. 

(iii) Again with $n=2m$, to obtain an example with no $\delta_i$ constant,  choose $h:M\to\cn P^{n-1}$ full and totally isotropic, then set $\delta_i = h_{(i-1)}$ \ $(i=1,\ldots, m-1)$ and $\delta_i = \delta_{n-i-1}^{\perp_{\cn}}$ \
$(i = m,\ldots n-2)$; this is real of degree $n-2$ and normalized.

(iv)  Take isotropic coordinates on $\cn^4$ and $\cn^6$ and
define $h_0:S^2 = \CP 1 = \cn \cup \{\infty\} \to \CP 3$ by $h_0(z)=[1,z^3,z,-z^2]$.  Then $h_0$ is full holomorphic and isotropic.
Applying the algorithm of \cite{bahy-wood-G2} yields a full totally isotropic holomorphic map
$h:S^2 \to \CP 5$  given by $h(z) = [24 z, 6z^4, 12z^2, -8z^3,1, -48z^5]$. By part (iii), $\ul 0\subset h\subset h_{(1)}\subset h_{(1)}^{\perp_{\cn}}\subset h^{\perp_{\cn}}\subset\CC^6$ is a real superhorizontal sequence. 
\end{example}

The following result will be useful when obtaining bounds on the uniton numbers of real extended solutions. 

\begin{lemma}\label{le:isotropic} Let
$X\subset\CC^{2m}$ be a maximal isotropic holomorphic subbundle with\\ $\dim X_{(1)}\leq\dim X+1$. Then $X$ is constant.
\end{lemma}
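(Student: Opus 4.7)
The plan is to consider the $\mathcal{O}_M$-linear ``second fundamental form''
$$
A:\ X \longrightarrow L:=X_{(1)}/X, \qquad A(\sigma) = \pa_z\sigma + X,
$$
and show $A\equiv 0$. This suffices: then $X$ is closed under $\pa_z$ and, being holomorphic, also under $\pa_{\zbar}$, hence closed under the flat connection of the trivial bundle $\CC^{2m}$, and therefore constant. The hypothesis $\dim X_{(1)}\leq \dim X+1$ means $L$ has rank at most $1$; after filling out zeros as in the proof of Corollary \ref{co:hol-ker}, we may assume either $L=0$ (in which case $A=0$ and we are done) or that $L$ is a genuine holomorphic line subbundle of $\CC^{2m}/X$.

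The key leverage is the isotropy. Since $X$ has rank $m$ and is isotropic in $\cn^{2m}$ with respect to $\ip{\cdot}{\cdot}_{\cn}$, it is maximal isotropic, i.e., $X=X^{\perp_{\cn}}$. Differentiating $\ip{\sigma}{\tau}_{\cn}=0$ for $\sigma,\tau \in\Gamma(X)$ yields
$$
\ip{\pa_z\sigma}{\tau}_{\cn}+\ip{\sigma}{\pa_z\tau}_{\cn}=0.
$$
Because $\ip{X}{X}_{\cn}=0$, the pairing $B(\sigma,\tau):=\ip{A(\sigma)}{\tau}_{\cn}$ (defined by lifting $A(\sigma)$ to any representative in $X_{(1)}$) descends to a well-defined, $\mathcal{O}_M$-bilinear form on $X\otimes X$, and the displayed identity combined with the symmetry of $\ip{\cdot}{\cdot}_{\cn}$ forces $B$ to be skew-symmetric.

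The finishing step is pointwise linear algebra on the rank-one situation. Work locally near a point where one hopes $A\neq 0$: choose a holomorphic section $e$ of $X_{(1)}$ whose class $[e]$ frames $L$, and write $A(\sigma)=f(\sigma)\,[e]$ with $f\in X^*$ holomorphic; set $g(\tau):=\ip{e}{\tau}_{\cn}\in X^*$. Then $B(\sigma,\tau)=f(\sigma)g(\tau)$, and skew-symmetry gives $f(\sigma)g(\sigma)=0$ for every section $\sigma$ of $X$. Fibrewise, the product of two linear forms on the vector space $X_p$ vanishes identically iff one of them is zero, so at each point either $f=0$ (i.e.\ $A=0$) or $g=0$. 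But $g_p=0$ means $e(p)\in X_p^{\perp_{\cn}}=X_p$, forcing $[e](p)=0$ and contradicting that $[e]$ frames $L$. Hence $f\equiv 0$, so $A\equiv 0$ and $X$ is constant.

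The only real obstacle I anticipate is bookkeeping: ensuring that $L$ has constant rank (handled by the standard ``filling out zeros'' procedure) and that the local section $e$ is genuinely non-vanishing modulo $X$, so that the pointwise contradiction in the last step is valid. The analytic content is otherwise concentrated in the one-line differentiation of the isotropy relation.
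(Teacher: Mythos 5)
Your proof is correct and follows essentially the same route as the paper: your form $B(\sigma,\tau)=\ip{\pa_z\sigma}{\tau}_{\cn}$ is exactly the paper's $\rho$, its skew-symmetry comes from the same differentiation of the isotropy relation, and your rank-one factorization $B=f\otimes g$ together with $fg=0$ is an explicit version of the paper's observation that a nonzero alternating form has even rank, hence rank $\geq 2$. Your use of maximal isotropy to rule out $g_p=0$ plays the same role as the paper's exact sequence $0\to X_p\to (X_{(1)})_p\to \Ima F\to 0$.
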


\begin{proof}  Define a complex-valued $2$-form on $X$ by 
$$
\rho_p(v,w)=\ip{\pa\sigma(p)}{w}_{\cn}
\qquad(v,w\in X_p,\ p \in M),
$$
where $\sigma$ is any local section of $X$ with
$\sigma(p)=v$. For a fixed $p \in M$, consider 
$$
F:X_p \to X_p^*,\quad F(v)=\rho_p(v,\cdot).
$$
Clearly, $\rho$ restricts to a non-degenerate $2$-form on $X_p/\ker F$; in particular, $X/\ker F$ must be of even dimension. However, since $X$ is maximal isotropic, we have the exact sequence 
$$
0\to X_p \hookrightarrow (X_{(1)})_p \to \Ima F\to 0,
$$
so that $\dim X_p/\ker F=\dim\Ima F\leq 1$. Hence
$\Ima F=0$, so $X_p=(X_{(1)})_p$\,, i.e., $X$ is constant. 
\end{proof}

It is easy to find all real superhorizontal sequences of
\emph{odd} degree as follows.

\begin{proposition} All real superhorizontal sequences $(\delta_i)$ of odd degree $r$ are given inductively in the following way$:$
\begin{enumerate}
\item[(i)] choose a maximal isotropic holomorphic subbundle $Y$ of\/ $\CC^{2m}$, i.e., a holomorphic map $Y \to \U{m}/\O{2m}$  (which may be constant), and set
$\delta_{(r+1)/2} = Y$.

\item[(ii)] given $\delta_{i+1}, \cdots, \delta_{(r+1)/2}, \cdots \delta_{r-i}$ for some $i \in \{2,\ldots, (r-1)/2\}$, choose a holomorphic subbundle $\delta_i$ of $\CC^n$ satisfying 
{\rm (a)}  $\delta_i \subset \{(\delta_{r-i})_{(1)}\}^{\perp_{\cn}}$
and set\\
\indent {\rm (b)}  $\delta_{r-i+1} = \delta_i^{\,\perp_{\cn}}$.
\qed
\end{enumerate}
\end{proposition}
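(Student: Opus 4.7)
The plan is to verify both directions by induction from the middle of the filtration outward, writing $r = 2s+1$ throughout.

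First, I would analyze the middle element. Since $r$ is odd, the index $(r+1)/2 = s+1$ is an integer, and the reality condition $\delta_j^{\perp_{\cn}} = \delta_{r-j+1}$ at $j = s+1$ gives $\delta_{s+1}^{\perp_{\cn}} = \delta_{s+1}$. Hence $\delta_{s+1}$ is a maximal isotropic holomorphic subbundle, which forces $n = 2m$ and is exactly the datum produced in step (i). The superhorizontal condition $(\delta_{s+1})_{(1)} \subset \delta_{s+2}$ will follow from the first inductive step below.

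For the inductive step, suppose the real superhorizontal sequence has been reconstructed on the index range $\{i+1, \ldots, r-i\}$. I would argue that extending to indices $\{i, r-i+1\}$ amounts exactly to choosing a holomorphic subbundle $\delta_i$ satisfying (a), with $\delta_{r-i+1}$ then forced by (b). Necessity is immediate: reality gives (b), and the superhorizontal condition $(\delta_{r-i})_{(1)} \subset \delta_{r-i+1} = \delta_i^{\perp_{\cn}}$ rewrites verbatim as (a). For sufficiency, given (a) one has $\delta_i \subset \{(\delta_{r-i})_{(1)}\}^{\perp_{\cn}} \subset \delta_{r-i}^{\perp_{\cn}} = \delta_{i+1}$, giving the required inclusion. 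To verify the remaining superhorizontal inclusion $(\delta_i)_{(1)} \subset \delta_{i+1}$, I would take local holomorphic sections $\sigma$ of $\delta_i$ and $\tau$ of $\delta_{r-i}$, observe that $\ip{\sigma}{\tau}_{\cn}$ is a holomorphic function vanishing identically (since $\delta_i \subset \delta_{r-i}^{\perp_{\cn}}$), and differentiate to get
\[
0 = \pa_z\ip{\sigma}{\tau}_{\cn} = \ip{\sigma^{(1)}}{\tau}_{\cn} + \ip{\sigma}{\tau^{(1)}}_{\cn},
\]
in which the second term vanishes by condition (a). Hence $\sigma^{(1)} \perp_{\cn} \delta_{r-i}$, i.e., $\sigma^{(1)} \in \delta_{r-i}^{\perp_{\cn}} = \delta_{i+1}$, as required. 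Holomorphicity of $\delta_{r-i+1} = \delta_i^{\perp_{\cn}}$ is automatic from holomorphicity of $\delta_i$ and the $\cn$-bilinearity of $\ip{\cdot}{\cdot}_{\cn}$.

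Iterating this argument down through $i = s, s-1, \ldots, 1$ produces the entire sequence $\delta_1, \ldots, \delta_r$ and shows that the construction output is always a real superhorizontal sequence of odd degree. Conversely, any such sequence may be fed back into the construction by taking $Y = \delta_{s+1}$ and, at each inductive step, selecting the $\delta_i$ supplied by the given sequence; the two verifications above show these choices obey (a) and (b). The main obstacle is the sufficiency direction in the inductive step, specifically showing that the single condition (a) encodes both the superhorizontal inclusion at level $r-i$ (where it is tautological) and at level $i$ (where the bilinear-form differentiation trick is needed); the remaining inclusions and reality assertions are bookkeeping.
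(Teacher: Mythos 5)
The paper states this proposition with no proof (it is followed immediately by \qed), so there is nothing to compare against; your argument correctly supplies the omitted verification. The two essential points are both present and right: the middle reality condition $\delta_{(r+1)/2}^{\;\perp_{\cn}}=\delta_{(r+1)/2}$ forces a maximal isotropic middle term, and the differentiation of the identically vanishing pairing $\ip{\sigma}{\tau}_{\cn}$ shows that condition (a), which is tautologically the superhorizontality inclusion at level $r-i$, also yields $(\delta_i)_{(1)}\subset\delta_{i+1}$ at the mirror level, so that each pair of new terms is constrained only by the single choice (a) together with the forced definition (b).
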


Note that, apart from the initial choice of $\delta_{(r+1)/2}$, we only have to differentiate and solve the linear equations (a) and (b) to find
the other $\delta_i$. Note, further, that if $\{(\delta_{r-i})_{(1)}\}^{\perp_{\cn}}$ is the zero bundle, $\delta_j$ for $j \leq i$ will be zero and we will effectively have a shorter sequence.  

An alternative way of finding real superhorizontal sequences is illustrated by the following example; it can readily be generalized to find real superhorizontal sequences of any (even or odd) degree with subbundles of specified ranks. 

\begin{example} \label{ex:super135}
To find all superhorizontal sequences of degree $3$ in $\CC^6$
with the $\delta_i$ of ranks $1$, $3$ and $5$, let
$h_0:M \to \cn P^5$ be a holomorphic map of isotropy order at least $2$ and set $\delta_1 = h_0$.
If $h_0$ is constant, choose $h_3$ to be a rank two isotropic
 subbundle of the constant subbundle $(h_0)^{\perp_{\cn}}$ not containing $h_0$.
 If $h_0$ is non-constant,
choose $h_3$ to be a rank one isotropic subbundle of ${(h_0)_{(1)}}^{\!\!\perp_{\cn}}$ not lying in
$(h_0)_{(1)}$; possible subbundles can be found explicitly by solving linear equations to find a basis for ${(h_0)_{(1)}}^{\!\!\perp_{\cn}}$, then the coefficients of a meromorphic section $H_3$ spanning $h_3$ will satisfy a single quadratic equation.  In either case, set
$\delta_2 = (h_0)_{(1)} + h_3$, and
$\delta_3 = (\delta_1)^{\perp_{\cn}}$.
\end{example} 

\subsection{Normalization of real solutions}\label{subsec:normreal}

Given an extended solution $\Phi:M\to\Omega_r\U n^{\rn}$, as usual set $W=\Phi\H_+:M\to\Gr_r^{\rn}$ and $\delta_{i+1}=P_i(W\cap\lambda^i\HH_+)$ \
$(0\leq i\leq r-1)$. From the reality conditions, it follows that $\delta_{(r+1)/2}$ is maximal isotropic when $r$ is odd. In fact,
as remarked after Theorem \ref{th:fact-r-odd}, this is the first uniton in the alternating uniton factorization of $W$. 
We may perform a normalization procedure similar to that of the $\U n$ case discussed in \S \ref{subsec:normalized}.

\begin{proposition} \label{pr:max-degree} 
Let $\Phi:M \to \Omega_r\U n^{\rn}$ be an extended solution with $r\geq3$.  Then $nr$ is even and
\begin{enumerate}
\item[(i)] there is a loop $\eta \in\Omega_{\alg}\U n^{\rn}$
 such that $\wt{\Phi} = \eta^{-1}\Phi$ is real and
normalized\/$;$

\item[(ii)] the degree of any normalized extended solution $\wt{\Phi}$ is at most $n-1 ;$ 

\item[(iii)] If\/ $n$ is even, we find $\eta$ such that the degree of $\wt{\Phi}$ at most $n-2$.
\end{enumerate}
\end{proposition}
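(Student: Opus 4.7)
The parity claim $nr\in 2\zn$ is immediate from Lemma \ref{le:real} applied to $W=\Phi\HH_+$. For (i), the plan is to iterate a real analog of the reduction in Lemma \ref{le:reduce}(i). By reality $\ov{\delta_i}^{\perp}=\delta_{r+1-i}$, so the condition ``$\delta_i$ constant'' is stable under $i\mapsto r+1-i$; the only self-paired index occurs when $r$ is odd at $i=(r+1)/2$, in which case $\delta_{(r+1)/2}$ is automatically maximal isotropic. I would therefore use two reductions: (a)~\emph{self-paired}: if $\delta_{(r+1)/2}$ is constant (hence constant maximal isotropic), reduce by $\eta=\pi_{\delta_{(r+1)/2}}+\lambda\pi_{\delta_{(r+1)/2}}^{\perp}$, which lies in $\Omega_1\U n^{\rn}$ since $\delta^{\perp}=\ov\delta$ for maximal isotropic $\delta$; (b)~\emph{genuinely paired}: if $0\leq i<(r+1)/2$ with $\delta_i$ constant (so $\delta_{r+1-i}$ is also constant by reality), setting $V=\delta_i$, $\ov V=\delta_{r+1-i}^{\perp}$, reduce by
\[
\eta \;=\; \pi_V+\lambda\,\pi_{(V+\ov V)^{\perp}}+\lambda^2\,\pi_{\ov V}
\;=\; (\pi_{\delta_i}+\lambda\pi_{\delta_i}^{\perp})(\pi_{\delta_{r+1-i}}+\lambda\pi_{\delta_{r+1-i}}^{\perp}),
\]
the two factors commuting because $\delta_i\subset\delta_{r+1-i}$. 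This $\eta$ lies in $\Omega_2\U n^{\rn}$ and the commuting factorization allows two applications of Lemma \ref{le:reduce}(i), yielding $\eta^{-1}\Phi\in\Omega_{r-2}\U n^{\rn}$. Iterating (a) and (b) produces $\wt\Phi$, which is normalized by Lemma \ref{le:reduce}(ii).

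For (ii), a normalized $\wt\Phi$ of degree $s$ satisfies $\rank A_i\geq 1$ for $0\leq i\leq s$, while \eqref{sum-rank} gives $\sum\rank A_i=n$; hence $s\leq n-1$. For (iii), I may assume (i) yields $s=n-1$ (else nothing is left to do). Since $n=2m$, $s=2m-1$ is odd, so reality forces $\delta_m$ to be maximal isotropic. Normalization combined with $\sum\rank A_i=n$ forces every $\rank A_i=1$, whence $\rank\delta_{m+1}=m+1$; superhorizontality gives $(\delta_m)_{(1)}\subset\delta_{m+1}$, so $\dim(\delta_m)_{(1)}\leq m+1=\dim\delta_m+1$, and Lemma \ref{le:isotropic} forces $\delta_m$ to be constant. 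One final application of the self-paired reduction (a) to this now-constant maximal isotropic subbundle brings the degree down to $n-2$.

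The main obstacle is the verification of reduction (b): one must check both that $\eta\in\Omega_2\U n^{\rn}$ (a direct coefficient comparison using $\ov{\delta_i}^{\perp}=\delta_{r+1-i}$ and $\ov{V+\ov V}=V+\ov V$) and that $\eta^{-1}\Phi$ drops in degree by exactly two. The latter follows from the commutation $[\pi_{\delta_i},\pi_{\delta_{r+1-i}}]=0$ (a consequence of the inclusion $\delta_i\subset\delta_{r+1-i}$), which factors $\eta$ into two commuting degree-one reductions each of the form already covered by Lemma \ref{le:reduce}(i); the boundary case $i=0$ degenerates to $\eta=\lambda I$ and recovers the obvious reduction $W\mapsto\lambda^{-1}W$ available whenever $\delta_1=0$ (equivalently, whenever $A_0=A_r=0$ by reality).
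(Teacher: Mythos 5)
Your handling of the parity claim and of parts (ii) and (iii) is essentially the paper's own argument: Lemma \ref{le:real} for the parity, the count \eqref{sum-rank} with all $\rank A_i\geq 1$ for (ii), and, for (iii), rank-one $A_i$'s together with superhorizontality and Lemma \ref{le:isotropic} to make the middle maximal isotropic subbundle constant, followed by one degree-one real reduction. The gap is in part (i), in your ``genuinely paired'' reduction (b). Since $\eta\H_+=\delta_i+\lambda\,\delta_{r+1-i}+\lambda^2\H_+$, the assertion $\eta^{-1}W\in\Gr_{r-2}$ requires $W\subset\eta\HH_+$, i.e.\ $P_1W\subset\delta_{r+1-i}$ (and the dual requirement $\lambda^{r-2}\delta_i\subset W$ is, by reality, the \emph{same} condition). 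This does not follow from constancy of $\delta_i$ and $\delta_{r+1-i}$: $P_1W$ equals $\delta_2$ plus the image of a second-fundamental-form type map $\delta_1\to\CC^n/\delta_2$, and reality only forces the induced bilinear form on $\delta_1$ to be skew. For the same reason the claimed ``two applications of Lemma \ref{le:reduce}(i)'' break down: after reducing by $\pi_{\delta_{r+1-i}}+\lambda\pi_{\delta_{r+1-i}}^{\perp}$, the new solution $W'$ has $P_0W'\supset\pi_{\delta_{r+1-i}}^{\perp}(P_1W)$, which need not lie in $\delta_i$, so the lemma's hypotheses fail the second time. A concrete (constant, hence extended) counterexample: take $n=r=4$, let $u_1=(1,\ii,0,0)/\sqrt2$, $u_2=(0,0,1,\ii)/\sqrt2$, so that $u_1,u_2,\ov u_1,\ov u_2$ is a unitary basis with $\ip{u_1}{\ov u_1}_{\cn}=\ip{u_2}{\ov u_2}_{\cn}=1$ and all other bilinear products of basis vectors zero; let $W$ be spanned over $\lambda^4\H_+$ by $w_1,\lambda w_1,\lambda^2w_1,w_2,\lambda w_2,\lambda^3u_1,\lambda^3u_2,\lambda^3\ov u_2$, where $w_1=u_1+\lambda\ov u_2$ and $w_2=\lambda u_2-\lambda^2\ov u_1$. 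All pairings $\ip{\cdot}{\cdot}_s$ of Lemma \ref{le:real} vanish (the only nontrivial ones cancel, e.g.\ $\ip{w_1}{\lambda w_2}_s=\ip{u_1}{-\ov u_1}_{\cn}+\ip{\ov u_2}{u_2}_{\cn}=0$), so $W\in\Gr_4^{\rn}$; here $\delta_1=\spa\{u_1\}$, $\delta_2=\delta_3=Y:=\spa\{u_1,u_2\}$, $\delta_4=\spa\{u_1,u_2,\ov u_2\}$, so $A_2=0$ and your $\eta$ for the pair $(\delta_2,\delta_3)$ is $\pi_Y+\lambda^2\pi_Y^{\perp}$; but $\eta^{-1}w_1=u_1+\lambda^{-1}\ov u_2\notin\H_+$.

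The paper's proof avoids precisely this. For the middle pair (your failing case: $A_{r/2}=0$ with $r$ even, or $A_{(r\pm1)/2}=0$ with $r$ odd) it uses the \emph{single} degree-one real factor $\pi_Y+\lambda\pi_Y^{\perp}$ with $Y$ the constant maximal isotropic subbundle; this needs only $\delta_1\subset Y\subset\delta_r$ and lands in $\Gr_{r-1}^{\rn}$, accepting a change of parity of the degree. For an off-middle index $i>(r+1)/2$ with $A_i=A_{r-i}=0$, it does not build $\eta$ from the flag at all but, following Pacheco, sets $V=\lambda^{i+1-r}(W\cap\lambda^{r-i-1}\HH_+)+\lambda^{2-i}(W\cap\lambda^{i-1}\HH_+)+\lambda^2\HH_+$, for which $\lambda^{r-2}V\subset W\subset V$ holds by construction; the hypotheses $A_i=A_{r-i}=0$ are then used to show that $V$ is constant and real of degree two, so that $V=\eta\HH_+$ with $\eta\in\Omega_2\U n^{\rn}$ constant. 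You would need to adopt one of these devices --- the missing inclusion $P_1W\subset\delta_{r+1-i}$ is simply false in general --- to complete part (i).
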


\begin{proof} Set $W=\Phi\H_+:M\to\Gr_r^{\rn}$.  That $nr$ is even follows from Lemma \ref{le:real}.

(i) Suppose that $A_i=0$ for some $i$. Then $A_{r-i}=0$ as well, so we may assume that $i\geq r/2$. If $i=r$, then $\lambda^{-1}W:M\to\Gr_{r-2}^{\rn}$. If $r$ is odd and $i=(r+1)/2$, then $\delta_{(r+1)/2}=\delta_{(r+3)/2}$, so $Y=\delta_{(r+1)/2}$ is constant and maximal isotropic. As in Lemma \ref{le:reduce}, it follows that $(\pi_Y+\lambda^{-1}\pi_Y^{\perp})W:M\to\Gr_{r-1}^{\rn}$. Similarly, if $r$ is even and $i=r/2$, then $Y=\delta_{r/2}=\delta_{r/2+1}$ 
is constant and maximal isotropic, and $(\pi_Y+\lambda^{-1}\pi_Y^{\perp})W:M\to\Gr_{r-1}^{\rn}$. 
It remains to consider the case when $i>(r+1)/2$. Following \cite{pacheco-sympl}, we set 
$$
V=\lambda^{i+1-r}(W\cap\lambda^{r-i-1}\HH_+)+\lambda^{2-i}(W\cap\lambda^{i-1}\HH_+)+\lambda^2\HH_+\,.
$$
We have $\lambda^{r-2}V\subset W\subset V$; from the fact that $A_i=A_{r-i}=0$, it follows easily that $V$ is real of degree $2$ and constant. Hence $V=\eta\HH_+$ for some constant $\eta\in\Omega_2\U n^{\rn}$ and $\eta^{-1}W:M\to\Gr_{r-2}^{\rn}$.

(ii) Since $A_i\neq 0$ for all $i$, we have $n=\sum_{i=0}^r\rank A_i\geq r+1$, and the inequality follows. 

(iii) When $n$ is even and $r=n-1$, either some $A_i=0$ or $\dim\delta_i=i+1$ for all $i$.  If $A_i=0$ for some $i$, then we can reduce the degree by (i).  If $\dim\delta_i=i$ for all $i$, then $Y=\delta_{(r+1)/2}$ is constant by Lemma \ref{le:isotropic}. It is easy to see that $(\pi_Y+\lambda^{-1}\pi_Y^{\perp})W:M\to\Gr_{r-2}^{\rn}$. 
\end{proof}

\begin{remark} The bounds are sharp as shown by Examples \ref{ex:sharp} (i) and (iii).  
\end{remark}

\subsection{Examples of real harmonic maps and explicit formulae} \label{subsec:real-ex}

To find (extended solutions of) harmonic maps into the orthogonal group, real Grassmannians and the space
$\O{2m}/\U{m}$ we need to find extended solutions $W$ which are real (Definition 6.1).
To do this, we start with a finite set of meromorphic sections $H_j$ of $\CC^n$, set $X$ equal to their span
and compute the corresponding extended solution $W:M \to \Gr$ from \eqref{W}, then we impose the
reality conditions: these are linear and quadratic equations.  As in Example \ref{ex:alternating}, the  unitons $\alpha_i$  for the alternating factorization
are then given by  \eqref{alpha-L-S-U}; the harmonic map and associated extended solution are then the products \eqref{phi-fact}, \eqref{Phi-fact} of those unitons.

Let $\Phi:M\to\Omega_r\U n^{\rn}$ be an extended solution and set $W=\Phi\H_+:M\to\Gr_r^{\rn}$. As above,
we shall assume that $\Phi$ is normalized and that,
if $r$ is odd, $\delta_{(r+1)/2}$ is non-constant. Then $r$ is even and $\leq n-1$ if $n$ is odd and $r \leq n-2$ if $n$ is even.   Write $\varphi = \Phi_{-1}:M \to \U n$, so that $\ov\varphi=\pm\varphi$.

In the following results and examples, for an extended solution $W$ we write, as usual,
$\delta_{i+1}=P_i(W\cap\lambda^i\HH_+)$ so that the
$S^1$-invariant limit (Example \ref{ex:S1-limit}) of $W$ is
$W^0 = \sum_{i=0}^{r-1}\lambda^i \delta_{i+1}+\lambda^r\HH_+$.
As before, for a meromorphic section $H$ of $\CC^n$, we write $H^{(i)}$ to mean its $i$'th derivative with respect to a local complex coordinate on $M$;
further, we write $(H)_{(i)}$ to mean the osculating subbundle
 $h_{(i)}$ where $h = \spa\{H\}$; thus
$(H)_{(i)} = \spa\{H,H^{(1)}, \ldots, H^{(i)}\}$.

We first examine some low values of $r$ and $n$.  Clearly $r=0$ if and only if $\Phi=I$; the case $r=1$ is covered by Example \ref{ex:r=1real}. 

\begin{proposition}\label{pr:r2} Suppose that $r=2$ and $\rank\delta_1=1$. Then $W$ is $S^1$-invariant and $\varphi=\delta_1+\ov{\delta}_1:M\to G_2(\rn^n)$ so that $\varphi$ is a \emph{real mixed pair} in the sense of\/
\cite{bahy-wood-G2}. 
\end{proposition}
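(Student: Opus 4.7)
The plan is to verify characterisation (iv) of Proposition \ref{pr:alg-iso} for $r=2$,
\begin{equation*}
W \;=\; \delta_1 \;+\; \lambda P_1W \;+\; \lambda^2\HH_+\,,
\end{equation*}
which, since $\delta_2\subset P_1W$ always, reduces to the equality $P_1W=\delta_2$. Once this is shown, $\Phi$ is $S^1$-invariant, and the formula for $\varphi$ will follow by combining Proposition \ref{pr:S1-invt}(iii) with the reality relation on $W$.

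First I would write $\Phi=T_0+\lambda T_1+\lambda^2T_2$. The condition $\Phi\in\Omega_2\U n^{\rn}$ gives $T_2=\ov{T_0}$ and $T_1=\ov{T_1}$, so $T_1^{\,*}=T_1^{\,T}$ and $T_2^{\,*}=T_0^{\,T}$. Since $\rank\delta_1=\rank T_0=1$, factor $T_0=u\,\ov v^{\,T}$ with $u,v\in\cn^n\setminus\{0\}$; then $\delta_1=\spa\{u\}$ and $\ker T_0=v^{\perp}$ (Hermitian complement). The $\lambda^{\pm 2}$ coefficients of $\Phi\Phi^{*}=I=\Phi^{*}\Phi$ read $T_0T_0^{\,T}=0$ and $T_0^{\,T}T_0=0$; expansion gives $\ov{v^{T}v}\,uu^{T}=0$ and $(u^{T}u)\,\ov v\,\ov v^{\,T}=0$, forcing $u^{T}u=v^{T}v=0$, i.e., both $u$ and $v$ are isotropic for the standard complex bilinear form.

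Next I would deduce $T_1v=T_1\ov v=0$ from the $\lambda^{\pm 1}$ relations. Setting $a=T_1\ov v$, the equation $T_0T_1^{\,T}+T_1T_0^{\,T}=0$ becomes $u\,a^{T}+a\,u^{T}=0$; setting $b=T_1v$, the equation $T_1T_0^{\,*}+T_2T_1^{\,T}=0$ becomes $bu^{*}+\ov u\,b^{T}=0$. In each case the left-hand side is a rank-at-most-one complex matrix equal to the negative of its own transpose (note that $\ov u\,b^{T}=(bu^{*})^{T}$), hence complex skew-symmetric; since every complex skew-symmetric matrix has even rank, both vanish, and with $u\neq 0$ one obtains $a=b=0$. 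Because $\|v\|^{2}\neq 0$, we have $\cn^n=\ker T_0\oplus\spa\{v\}$, so $\Ima T_1=T_1(\ker T_0)$ and
\begin{equation*}
P_1W=\Ima T_0+\Ima T_1=\delta_1+T_1(\ker T_0)=\delta_2,
\end{equation*}
establishing the $S^1$-invariance of $\Phi$.

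Proposition \ref{pr:S1-invt}(iii) with $r=2$ then gives $\varphi=\zeta_0\oplus\zeta_2=\delta_1\oplus\delta_2^{\perp}$. Finally, expanding the reality identity $\ov W^{\perp}=\lambda^{-1}W$ for $W=\delta_1+\lambda\delta_2+\lambda^2\HH_+$ and matching Fourier coefficients in the $\lambda^{-1}$ and $\lambda^0$ positions yields $\delta_2=\ov{\delta_1}^{\perp}$, hence $\delta_2^{\perp}=\ov{\delta_1}$ and $\varphi=\delta_1+\ov{\delta_1}$. Isotropy of $u$ forces $\delta_1\cap\ov{\delta_1}=0$, so $\varphi$ is a two-dimensional complex subspace closed under conjugation, i.e., the Cartan image of a two-dimensional real subspace, giving a point of $G_2(\rn^n)$. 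The one genuine difficulty is the rank-one/skew-symmetric step killing $T_1v$ and $T_1\ov v$; once those vanish, $P_1W=\delta_2$ is forced and the rest is bookkeeping.
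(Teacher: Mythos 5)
Your proof is correct, but it takes a genuinely different route from the paper's. The paper stays entirely in the Grassmannian model: since $\rank\delta_1=1$, one writes $W=\spa\{H_0+\lambda H_1\}+\lambda\delta_2+\lambda^2\HH_+$, and the reality condition in the form $V^{\perp_s}=V$ of Lemma \ref{le:real} immediately gives $\delta_2=\delta_1^{\perp_{\cn}}$ and $\ip{H_0}{H_1}_{\cn}=0$, i.e.\ $H_1\in\delta_2$; hence $W=\delta_1+\lambda\delta_2+\lambda^2\HH_+$ and the conclusion follows from Example \ref{ex:superhor}(i). You instead work with the Fourier coefficients $T_0,T_1,T_2$ of $\Phi$, combining the reality relations $T_2=\ov{T_0}$, $T_1=\ov{T_1}$ with the unitarity identities $\Phi\Phi^*=\Phi^*\Phi=I$ coefficient by coefficient; the key step, killing $T_1v$ via the observation that a rank-at-most-one complex skew-symmetric matrix must vanish, is a nice self-contained argument, and your reduction of Proposition \ref{pr:alg-iso}(iv) to the single equality $P_1W=\delta_2$ is sound. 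What the paper's route buys is brevity, since it reuses Lemma \ref{le:real} rather than re-deriving the reality and unitarity constraints at the level of matrix coefficients (your final step, extracting $\delta_2=\ov{\delta_1}^{\perp}$ from $\ov{W}^{\perp}=\lambda^{-1}W$, is in effect that lemma again); what yours buys is independence from that lemma and a completely explicit display of how unitarity forces $S^1$-invariance. One small wording slip: in $ua^T+au^T=0$ and $bu^*+\ov u\,b^T=0$ the left-hand sides are symmetric (and zero); it is the single term $ua^T$ (resp.\ $bu^*$), after moving its partner across the equality, that is skew-symmetric of rank at most one and hence zero. The argument you intend is clearly this one, and it is correct.
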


\begin{proof}  We have $W=\spa\{H_0+\lambda H_1\}+\lambda\delta_2+\lambda^2\HH_+\,.$
However, the reality condition on $W$ implies that $H_1\in\delta_1^{\perp_{\cn}}=\delta_2$ and hence $W=\delta_1+\lambda\delta_2+\lambda^2\HH_+\,,$ so that
$\varphi$ is a mixed pair as in
Example \ref{ex:superhor}(i). 
\end{proof}

\begin{proposition} \label{pr:n4}
If $n \leq 4$ then $\Phi$ is $S^1$-invariant.
\end{proposition}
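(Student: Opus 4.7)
The plan is to enumerate the possible values of $r$ compatible with the normalization constraints of this subsection --- namely, $r$ even and $r\leq n-1$ when $n$ is odd, and $r\leq n-2$ when $n$ is even --- and then handle each case. For $n\leq 4$ these constraints leave only $r\in\{0,1,2\}$, with $r=1$ possible only when $n=4$.

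The cases $r=0$ and $r=1$ are essentially immediate. When $r=0$, $\Phi = I$ is trivially $S^1$-invariant. When $r=1$ (which forces $n=4$), Example \ref{ex:r=1real} gives $W = V + \lambda\HH_+$ with $V\subset\cn^4$ maximal isotropic, and such $W$ automatically satisfies criterion (iv) of Proposition \ref{pr:alg-iso} (any degree-one $W$ has this form), so $\Phi$ is $S^1$-invariant.

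The substantive case is $r=2$, where the strategy is to show $\rank \delta_1 = 1$ and then quote Proposition \ref{pr:r2}. Write the sequence $\ul{0}=\delta_0\subset\delta_1\subset\delta_2\subset\delta_3=\CC^n$. Since $W$ is real, the sequence $(\delta_i)$ is real (as noted in the text preceding Proposition \ref{pr:max-degree}), so $\delta_i^{\perp_{\cn}} = \delta_{3-i}$. In particular $\delta_1\subset\delta_2=\delta_1^{\perp_{\cn}}$, so $\delta_1$ is isotropic and $\rank\delta_1\leq[n/2]\leq 2$. The normalization assumption $A_0\neq 0$ (with $A_0\cong\delta_1$) gives $\rank\delta_1\geq 1$. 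The only remaining alternative is $\rank\delta_1 = 2$, which can only happen for $n=4$; but then $\delta_1 = \delta_1^{\perp_{\cn}} = \delta_2$, so $A_1\cong\delta_2/\delta_1 = 0$, contradicting normalization. Hence $\rank\delta_1 = 1$ and Proposition \ref{pr:r2} finishes the argument.

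The only mild obstacle is matching up definitions: one must remember both that the subbundles $\delta_{i+1}=P_i(W\cap\lambda^i\HH_+)$ inherit the reality condition $\delta_i^{\perp_{\cn}}=\delta_{r+1-i}$ from $W$, and that the normalization is encoded as $A_i\neq 0$ for each $i$ via the isomorphism $A_i\cong\delta_{i+1}/\delta_i$. With these identifications in hand, the proof is a short dimension count.
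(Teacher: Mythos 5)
Your proof is correct and follows essentially the same route as the paper's: reduce to the case $r=2$ via the degree bounds of \S\ref{subsec:normreal} and the standing normalization assumptions, show $\rank\delta_1=1$ by combining isotropy of $\delta_1$ with $A_i\neq 0$, and invoke Proposition \ref{pr:r2}. You merely spell out the dimension count and the $r=1$ case, which the paper leaves implicit.
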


\begin{proof}  If $n=2$, then $r=0$ so that $\varphi$ is constant.

If $n=3$, $\rank\delta_1=1$ and the result follows from Proposition  \ref{pr:r2}. 

If $n = 4$, by Proposition \ref{pr:max-degree}, $r=0$, $1$ or $2$. If $r=2$, then $\rank\delta_1=1$ and the result follows from Proposition \ref{pr:r2}. 
\end{proof}

The last two propositions are sharp as is shown by
Examples \ref{ex:n5r2rank2} and \ref{ex:r3n6}.

For general $n$ and $r$, all $W$ can be found by solving linear and quadratic equations; we illustrate this with some examples.  

\begin{example}\label{ex:n5r2rank2} Let $n=5$ and $r=2$ and set
$$
W = \spa\{H_0+\lambda H_1\,,\, H_0^{(1)} + \lambda H_3\}
+\lambda (H_0)_{(2)} + \lambda^2\HH_+
$$
where $H_0$ is full. It is easily checked that $W:M\to\Gr_2$ is an extended solution. Then $W$ is real if and only if 
(i) $H_0$ (i.e., $\spa\{H_0\}$) is totally isotropic;
(ii) $\ip{H_1}{H_0}_{\cn} = 0$;
(iii) $\ip{H_3}{H_0^{(1)}}_{\cn} = 0$;
(iv) $\ip{H_3}{H_0}_{\cn} + \ip{H_1}{H_0^{(1)}}_{\cn}= 0$.
The uniton factorization of Theorem \ref{th:fact-SOn} is $\Phi=(\pi_{\alpha_1}+\lambda\pi_{\alpha_1}^{\perp})(\pi_{\alpha_2}+\lambda\pi_{\alpha_2}^{\perp})$, where $$
\alpha_1=(H_0)_{(2)} \quad \text{and} \quad
\alpha_2=\spa\{H_0+\pi_{\alpha_1}^{\perp} H_1, H_0^{(1)}+\pi_{\alpha_1}^{\perp} H_3\}. 
$$
As before, when $M = S^2$, all solutions of (i) are given by the algorithm in
\cite{bahy-wood-G2}.  Set $H_1 = a H_0^{(3)}$ and
$H_3 = b H_0^{(3)} + c H_0^{(4)}$ for some meromorphic functions $a$, $b$ and $c$.  Then (ii) is satisfied and (iv) reads
$c\ip{H_0}{H_0^{(4)}}_{\cn} + a\ip{H_0^{(3)}}{H_0^{(1)}}_{\cn} = 0$, 
i.e.,
$(c-a)\ip{H_0}{H_0^{(4)}}_{\cn} = 0$ which is satisfied if $c=a$.  Finally (iii) can be written:
$b\ip{H_0}{H_0^{(4)}}_{\cn} = c\ip{H_0^{(1)}}{H_0^{(4)}}_{\cn}$.
Since $H_0$ is full, $\ip{H_0}{H_0^{(4)}}_{\cn}$ is not identically zero, so we can find $b$ to satisfy this.

This determines an extended solution $W=\Phi\H_+$ of a harmonic map $\varphi = \Phi_{-1}:M \to \O{5}$.  If $a$ is identically zero, then $\Phi$ is $S^1$-invariant with
$\varphi = H_{(1)} \oplus \ov{H_{(1)}}$ so that
$\varphi^{\perp} = G^{(2)}(H_0)$ is a harmonic
 map into $\RP{4}$.  If $a$ is not identically zero, then
$\Phi$ does not have values in $\Omega_2\U 5^{\nu, \rn}$ and
$\varphi$ does not lie in a Grassmannian.
\end{example}

\begin{example}\label{ex:r3n6}
Let $n=6$ and $r=3$ and set $X=\spa\{H_0+\lambda H_1+\lambda^2H_2, \lambda H_3+\lambda^2H_4\}$. Applying \eqref{W} gives the extended solution 
\begin{equation*}
\begin{split}
W &=\spa\{H_0+\lambda H_1+\lambda^2 H_2\} + \lambda \,\spa\{H_3+\lambda H_4, (H_0+\lambda H_1)_{(1)}\}\\
& + \lambda^2\spa\{(H_0)_{(2)}, (H_3)_{(1)}\}+\lambda^3\HH_+\,.
\end{split}
\end{equation*}
The $S^1$-invariant limit is
$W^0 =  \delta_1 + \lambda\delta_2+\lambda^2\delta_3+\lambda^3\HH_+$ where
$\delta_1= \spa\{H_0\}$,
$\delta_2 = \spa\{(H_0)_{(1)},H_3\}$, and
$\delta_3 = (H_0)_{(2)}+(H_3)_{(1)}$. This is a
real superminimal sequence if and only if $\delta_2$ is a
maximal isotropic subbundle of $\CC^6$ and $\delta_3=\delta_1^{\perp_{\cn}}$.  We can find all possible $H_0$ and $H_3$ as in Example \ref{ex:super135}.

Next, $W$ is real if and only if, additionally,
$$
\left. \begin{array}{rlrl}
{\rm (i)} & \ip{H_0}{H_2}_{\cn}+\ip{H_1}{H_1}_{\cn} &=0, \\
{\rm (ii)}& \ip{H_0}{H_1}_{\cn} &= 0, \\
{\rm (iii)} & \ip{H_1}{H_3}_{\cn}+\ip{H_0}{H_4}_{\cn} &=0. 
\end{array} \right\}
$$
We  can find all possible $H_1$ by solving the linear equation (ii); this is equivalent
to choosing a meromorphic section of 
$(\delta_1)^{\perp_{\cn}} = \delta_3$.    We then find all possible $H_2$ by solving the linear equation (i). 
Finally we solve the linear equation (iii) to find all possible $H_4$.  Then $W=\Phi\H_+$ is an extended solution with $\ii\Phi_{-1}:M\to\O 6$. We can easily calculate the factorization of Theorem \ref{th:fact-SOn} as in the previous example. 
\end{example}

\subsection{Maps into real Grassmannians and $\O{2m}/\U{m}$} 

For real $\nu$-invariant extended solutions, we obtain the following bounds on the minimal uniton number. 
\begin{proposition} Let $\Phi:M\to\Omega_r\U n^{\nu,\rn}$ be an extended solution with $r\geq 4$. Then 
\begin{enumerate}
\item[(i)] there exists $\eta\in\Omega_{r-s}\U n^{\rn}$ such that $\wt\Phi=\eta^{-1}\Phi:M\to\Omega_s\U n^{\nu,\rn}$ is normalized$;$

\item[(ii)] if\/ $\Phi_{-1}:M\to G_k(\rn^n)$, then $s\leq2\min\{k-1,n-k\} ;$

\item[(iii)] if\/ $\Phi_{-1}:M\to\O{2m}/\U m$, then $\eta$ may be chosen so that $s\leq 2m-3$.
\end{enumerate}
\end{proposition}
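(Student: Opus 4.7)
For part~(i), the plan is to mimic the proof of Proposition~\ref{pr:max-degree}(i), verifying at each reduction step that the modifier $\eta$ can be chosen to preserve both reality and $\nu$-invariance. Using $\Phi(-\lambda) = \Phi(\lambda)\Phi(-1)$, a direct calculation shows that $\wt\Phi = \eta^{-1}\Phi$ is $\nu$-invariant precisely when
\[
\eta(-\lambda)\eta(\lambda)^{-1} = \Phi(\lambda)\,\eta(-1)\,\Phi(\lambda)^{-1}\qquad(\lambda\in S^1).
\]
This identity is automatic for $\eta = \lambda$ (used when $A_0 = A_r = 0$, since both sides equal $-I$), and for the degree-$2$ loops $\eta = \pi_Y + \lambda^2\pi_Y^{\perp}$ of Proposition~\ref{pr:reduce-Gr} when $Y$ is maximal isotropic (then $\eta(-1) = I$ and the identity becomes $I = I$; moreover $\eta$ is real of degree $2$). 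The latter applies at every central reduction step of~\ref{pr:max-degree}(i): the vanishing of the relevant central $A_i$ forces the corresponding $\delta_i$ to be constant, and reality forces it to be maximal isotropic. Iterating these reductions produces a normalized $\wt\Phi\in\Omega_s\U n^{\nu,\rn}$.

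For part~(ii), the hypothesis $\Phi_{-1}:M\to G_k(\rn^n)$ combined with reality gives $\ov{\Phi_{-1}} = \Phi_{-1}$, so $r$ and thus $s$ are even; write $s = 2t$. The involution $\wt\Phi_{-1}$ on $\cn^n$ has $\pm 1$-eigenspaces of ranks $k$ and $n-k$, and under the isomorphism $\wt\Phi^{-1}:\wt W\ominus\lambda\wt W\to\CC^n$ these correspond via~\eqref{even-odd-decomp} to $\sum_{j}A_{2j}$ and $\sum_{j}A_{2j-1}$; hence $\sum_{j}\rank A_{2j} = k$ and $\sum_{j}\rank A_{2j-1} = n-k$. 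Among $i = 0,\ldots,2t$ there are $t+1$ even-indexed and $t$ odd-indexed positions, and normalization ($\rank A_i\ge 1$) yields $t+1 \le k$ and $t \le n-k$, so $s = 2t \le 2\min\{k-1, n-k\}$.

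For part~(iii), $\Phi_{-1}:M\to\O{2m}/\U m\subset\U{2m}$ yields $\ov{\Phi_{-1}} = -\Phi_{-1}$, forcing $n = 2m$ and $r, s$ odd; write $s = 2t+1$. Both $\nu$-eigenspaces of $\wt\Phi_{-1}$ now have rank $m$ (they are $V$ and $\ov V$ for a maximal isotropic $V$), so each sum in~\eqref{even-odd-decomp} equals $m$, giving $t+1\le m$ and $s\le 2m-1$. Suppose $s = 2m-1$. Then every $\rank A_i = 1$, forcing $\rank\delta_i = i$; in particular $\delta_m$ has rank $m$, and reality $\delta_m^{\perp_\cn}=\delta_m$ makes it maximal isotropic. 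Superhorizontality of $(\delta_i)$ (see Example~\ref{ex:S1-limit}) gives $\rank(\delta_m)_{(1)} \le \rank\delta_{m+1} = m+1$, so Lemma~\ref{le:isotropic} forces $\delta_m$ to be constant. Then the loop $\pi_{\delta_m} + \lambda^2\pi_{\delta_m}^{\perp} \in \Omega_2\U{2m}^{\nu,\rn}$ reduces the degree of $\wt\Phi$ further, contradicting normalization. Hence $s\ne 2m-1$, and $s$ odd forces $s\le 2m-3$.

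The principal obstacle is part~(i): the careful verification that every reduction step can be realized by a loop $\eta$ that is simultaneously real and $\nu$-invariance-preserving, and that the orientation of $\wt\Phi_{-1}$'s eigenspaces is the one used in the rank count of part~(ii). The degree-$2$ loops $\pi_Y + \lambda^2\pi_Y^{\perp}$ work precisely because maximal isotropicity of $Y$ is forced by reality at the central vanishing of $A_i$, and this matches the $\eta(-1) = I$ condition that trivializes the $\nu$-invariance identity.
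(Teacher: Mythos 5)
Parts (ii) and (iii) of your argument are essentially the paper's: the rank count $k=\sum_j\rank A_{2j}\geq s/2+1$, $n-k=\sum_j\rank A_{2j+1}\geq s/2$ for (ii), and the use of Lemma \ref{le:isotropic} on the maximal isotropic central subbundle $\delta_{(s+1)/2}$ for (iii). (A small wording point in (iii): a constant $\delta_m$ with $A_m\neq 0$ does not \emph{contradict} normalization --- it just permits a further reduction by $\pi_{\delta_m}+\lambda^2\pi_{\delta_m}^{\perp}$, which is why the statement says $\eta$ ``may be chosen''; the paper argues the contrapositive directly. Your $\nu$-invariance criterion $\eta(-\lambda)\eta(\lambda)^{-1}=\Phi(\lambda)\eta(-1)\Phi(\lambda)^{-1}$ is correct and is a clean way to organize the bookkeeping.)

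However, part (i) has a genuine gap. Normalization requires $A_i\neq 0$ for \emph{every} $i$, and your reductions only cover the extreme case $A_0=A_r=0$ (via $\eta=\lambda$) and the central case $i=r/2$ or $(r+1)/2$ (via $\pi_Y+\lambda^2\pi_Y^{\perp}$ with $Y$ maximal isotropic). You have not treated the case $A_i=0$ for a non-central, non-extreme index, say $r/2+1\leq i\leq r-1$. There $\delta_i=\delta_{i+1}$ is indeed constant (Lemma \ref{le:reduce}(ii)), but reality gives $\delta_i^{\,\perp_{\cn}}=\delta_{r-i+1}\neq\delta_i$, so $\delta_i$ is \emph{not} maximal isotropic; consequently $\pi_{\delta_i}+\lambda^2\pi_{\delta_i}^{\perp}$ is not real and your reduction is unavailable. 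Nor does the degree-$2$ constant real loop from the non-central case of Proposition \ref{pr:max-degree}(i) help, since it need not satisfy $\eta(-1)=I$ and need not be even in $\lambda$, so it can destroy $\nu$-invariance by your own criterion. The paper closes this case, following Pacheco's symplectic treatment, by building a \emph{degree-four} constant subspace
$V=\lambda^{i+1-r}(W\cap\lambda^{r-i-1}\HH_+)+\lambda^{3-i}(W\cap\lambda^{i-1}\HH_+)+\lambda^4\HH_+$ with $\lambda^{r-4}V\subset W\subset V$; one checks $(V\ominus\lambda V)^{\text{odd}}=0$, so $V=\eta\HH_+$ for a constant $\eta\in\Omega_4\U n^{\rn}$ that is even in $\lambda$ with $\eta(-1)=I$ --- exactly the conditions your identity demands --- and $\eta^{-1}W:M\to\Gr_{r-4}^{\nu,\rn}$. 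Without some such construction for the off-central vanishing of $A_i$, the induction in your part (i) does not go through.
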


\begin{proof} (i) Suppose that $\Phi$ is not normalized, so that $A_i=0$ for some $i$. Since this implies that $A_{r-i}=0$, we may assume that $i\geq r/2$. The result is obvious if $i=r$. If $r$ is odd and $i=(r+1)/2$, then $Y=\delta_{(r+1)/2}=\delta_{(r+3)/2}$ is constant and maximal isotropic. As in Proposition \ref{pr:reduce-Gr} it follows that $(\pi_Y+\lambda^{-2}\pi_Y^{\perp})$, $W:M\to\Gr_{r-2}^{\nu,\rn}$. Similarly, if $r$ is even and $i=r/2$, then $Y=\delta_{r/2}=\delta_{r/2+1}$ is constant and maximal isotropic, and $(\pi_Y+\lambda^{-2}\pi_Y^{\perp})W:M\to\Gr_{r-2}^{\nu,\rn}$\,. 

Suppose now that $r/2+1\leq i\leq r-1$. Following the treatment of the symplectic case in
\cite{pacheco-sympl} we define 
$$
V=\lambda^{i+1-r}(W\cap\lambda^{r-i-1}\HH_+)+\lambda^{3-i}(W\cap\lambda^{r-i-1}\HH_+)+\lambda^4\HH_+\,.
$$
We have $\lambda^{r-4}V\subset W\subset V$; from the fact that $A_i=A_{r-i}=0$, it follows easily that $V$ is constant and lies in $\Gr_4^{\rn}$. As in \cite{pacheco-sympl}, we see that 
$
(V\ominus\lambda V)^{\text{odd}}=0.
$
Hence $V=\eta\HH_+$\,, for some $\eta\in\Omega_4\U n^{\rn}$ with $\eta(-1)=I$, and $\eta^{-1}W:M\to\Gr_{r-4}^{\nu,\rn}$\,.

(ii) We know that $r$ is even, and since $\wt W=\wt \Phi\H_+$ is normalized, we have 
$$
k=\sum_{j=0}^{s/2}\rank A_{2j}\geq s/2+1\text{ and } n-k=\sum_{j=0}^{s/2-1}\rank A_{2j+1}\geq s/2,
$$
and the inequality follows. 

(iii) Since $r$ is odd, $\delta_{(r+1)/2}$ is maximal isotropic. If $Y=\delta_{(r+1)/2}$ is constant, then $(\pi_Y+\lambda^{-2}\pi_Y^{\perp})\wt\Phi:M\to\Omega_{s-2}\U n^{\nu,\rn}$. Thus, assume that $\delta_{(r+1)/2}$ is non-constant. By Proposition \ref{pr:max-degree} we have $\rank A_{(r+1)/2}=\rank\delta_{(r+3)/2}-\rank\delta_{(r+1)/2}\geq 2$. By reality, we then also have $\rank A_{(r-1)/2}\geq 2$. Thus 
$$
2m=\sum_{j=0}^s\rank A_j\geq s+3,
$$
and the inequality follows. 
\end{proof}

\begin{corollary} {\rm (i)} Suppose that
$k$ or $n-k$ is even and let $\varphi:M\to G_k(\rn^n)\subset\U n$ be a harmonic map of finite uniton number. Then  there is an extended solution $\Phi:M\to\Omega_r\U n^{\nu,\rn}$ with $r\leq2\min\{k,n-k\}$ and $\Phi_{-1}=\pm\varphi$. 

{\rm (ii)} Let $\varphi:M\to\O{2m}/\U m\subset\U{2m}$ be a harmonic map of finite uniton number. Then there is an extended solution $\Phi:M\to\Omega_r\U{2m}^{\nu,\rn}$ with $r\leq 2m-3$ and $\Phi_{-1}=\pm\varphi$. 
\qed \end{corollary}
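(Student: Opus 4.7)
The plan is to reduce both parts of the corollary to the preceding Proposition by first producing, in each case, a polynomial extended solution in $\Omega\U n^{\nu,\rn}$ (respectively $\Omega\U{2m}^{\nu,\rn}$) whose value at $\lambda = -1$ is $\pm\varphi$, and then normalizing.

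For part (i), I may assume without loss of generality that $n-k$ is even: if only $k$ is even, replace $\varphi$ by $-\varphi$, which takes values in $G_{n-k}(\rn^n)$, and apply the argument to that. Fix $z_0\in M$, decompose $\varphi(z_0)^{\perp}=\delta+\ov\delta$ for a maximal isotropic subspace $\delta\subset\cn^n$ (possible because $n-k$ is even), and set
\[
Q^{\rn}_{\lambda}=\lambda^{-1}\pi_{\delta}+\pi_{\varphi(z_0)}+\lambda\pi_{\ov\delta}\in\Omega_{\alg}\U n,
\]
exactly as in the proof of the lemma preceding \S 6.3.  A direct check gives $Q^{\rn}\in\Omega\O n^{\nu}$ with $Q^{\rn}_{-1}=\varphi(z_0)$.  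By Remark \ref{rem:fi-uniton}, there is an algebraic extended solution $\Psi:M\to\Omega_{\alg}\U n$ associated to $\varphi$ with $\Psi(z_0)=Q^{\rn}$.  Since both $\lambda\mapsto\Psi_{-\lambda}\Psi_{-1}^{-1}$ and $\lambda\mapsto\ov{\Psi(\ov\lambda)}$ are extended solutions agreeing with $\Psi$ at $z_0$ (because $Q^{\rn}$ is $\nu$-invariant and real-valued on $S^1$), uniqueness forces $\Psi\in\Omega\O n^{\nu}$.  Writing $\Psi=\sum_{\ell=-j}^{j}\lambda^{\ell}T_{\ell}$ in the form \eqref{Phi-SOn} for an appropriate $j$, the polynomial $\Phi^{\circ}:=\lambda^{j}\Psi$ lies in $\Omega_{2j}\U n^{\nu,\rn}$ and satisfies $\Phi^{\circ}_{-1}=(-1)^{j}\varphi=\pm\varphi$.

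Now apply part (i) of the preceding Proposition to $\Phi^{\circ}$ to obtain $\eta\in\Omega_{2j-s}\U n^{\rn}$ with $\Phi:=\eta^{-1}\Phi^{\circ}\in\Omega_{s}\U n^{\nu,\rn}$ normalized.  Inspection of the reductions in that proof shows each factor is either $\pi_Y+\lambda^{-2}\pi_Y^{\perp}$ with $Y$ constant maximal isotropic, or the constant loop in $\Omega_{4}\U n^{\rn}$ built from the subspace $V$ appearing there; each evaluates to $I$ at $\lambda=-1$, so $\eta(-1)=I$ and $\Phi_{-1}=\Phi^{\circ}_{-1}=\pm\varphi$.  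Part (ii) of the same Proposition then gives $s\leq 2\min\{k'-1,n-k'\}$, where $k'\in\{k,n-k\}$ is determined by the sign; in either case $s\leq 2\min\{k,n-k\}$, establishing (i).  Part (ii) of the corollary is analogous: the lemma in \S 6.3 supplies an extended solution $\Phi^{\circ}\in\Omega_{r'}\U{2m}^{\nu,\rn}$ of odd degree with $\Phi^{\circ}_{-1}=\pm\varphi$, and applying parts (i) and (iii) of the preceding Proposition (with the same $\eta(-1)=I$ check) produces a normalized $\Phi=\eta^{-1}\Phi^{\circ}\in\Omega_{s}\U{2m}^{\nu,\rn}$ with $s\leq 2m-3$ and $\Phi_{-1}=\pm\varphi$.

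The main obstacle will be the careful bookkeeping: verifying that every reduction factor produced by the normalization procedure of the preceding Proposition satisfies $\eta(-1)=I$ (so $\Phi_{-1}$ is preserved up to the sign accumulated in the initial shift by $\lambda^{j}$), and then tracking the sign $\pm$ so as to apply part (ii) (resp.\ part (iii)) of that Proposition to the correct Grassmannian $G_{k'}(\rn^{n})$ and recover the uniform bounds $2\min\{k,n-k\}$ in (i) and $2m-3$ in (ii).
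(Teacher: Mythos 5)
Your proposal is correct and follows essentially the route the paper intends for this \verb|\qed|'d corollary: produce a polynomial extended solution in $\Omega\U n^{\nu,\rn}$ (resp.\ $\Omega\U{2m}^{\nu,\rn}$) with value $\pm\varphi$ at $\lambda=-1$ via the existence lemmas of the earlier subsections, then invoke the preceding normalization Proposition and its degree bounds, checking that the reduction loops satisfy $\eta(-1)=\pm I$ so the harmonic map is preserved up to sign. The only slip is the claim that every reduction factor equals $I$ at $\lambda=-1$ (the factor $\lambda^{-1}$ arising when $A_r=0$ gives $-I$), but since the conclusion only asserts $\Phi_{-1}=\pm\varphi$ this is harmless.
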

Note that if $k$ and $n-k$ are odd, then we can embed $G_k(\rn^n)$ in $G_k(\rn^{n+1})$. 

We now give some examples, starting with a classification for low values of $r$.

\begin{proposition}

Let $W:M\to\Gr_r^{\nu,\rn}$ be an extended solution.  Suppose that $r \leq 2$, or $r=3$ and
$\rank\delta_1=1$. Then $W$ is $S^1$-invariant.
\end{proposition}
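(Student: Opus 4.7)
My plan is to establish that $W$ coincides with its $S^1$-invariant limit $W^0 = \sum_{i=0}^{r-1} \lambda^i \delta_{i+1} + \lambda^r \HH_+$ introduced in \S\ref{subsec:s1invariant}; because the quotients $W/\lambda^r\HH_+$ and $W^0/\lambda^r\HH_+$ share the same associated graded pieces $\delta_1,\ldots,\delta_r$ and thus the same finite total dimension, it will suffice to prove the inclusion $W^0 \subset W$, after which $S^1$-invariance follows from Proposition~\ref{pr:alg-iso}(iv). The cases $r=0$ ($W=\HH_+$) and $r=1$ ($W=\delta_1+\lambda\HH_+$) are immediate from this description, so the substance lies in $r=2$ and $r=3$.

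The main device is $\nu$-invariance, which splits $W = W_+ \oplus W_-$ into the $\pm 1$ eigenspaces of $\nu$, i.e.\ the even- and odd-in-$\lambda$ components. For each $v_i \in \delta_{i+1}$ I will choose a representative $w \in W$ (respectively in $W\cap\lambda^i\HH_+$) with $P_i w = v_i$, project onto the $\nu$-eigenpart of the correct parity, and truncate the tail using $\lambda^r\HH_+ \subset W$. For $r=2$ this handles everything at once: $\lambda\delta_2 \subset W$ requires no $\nu$-invariance (the $\lambda^2$-tail already lies in $\lambda^2\HH_+ \subset W$), and for $\delta_1 \subset W$ the even part of $w = v_0 + \lambda w_1 + \lambda^2 u$ equals $v_0 + \lambda^2\cdot(\text{even part of }u)$, whose $\lambda^2$-tail also sits in $\lambda^2\HH_+ \subset W$, leaving $v_0 \in W$.

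For $r=3$, the inclusion $\lambda^2\delta_3 \subset W$ is automatic from $\lambda^3\HH_+ \subset W$, and $\lambda\delta_2 \subset W$ follows by the same pattern (odd part of an element of $W\cap\lambda\HH_+$, truncated modulo $\lambda^3\HH_+$). The crucial step is $\delta_1 \subset W$. Here $\nu$-invariance and truncation only deliver $v_0 + \lambda^2 w_2 \in W$ for some $w_2 \in \cn^n$ determined modulo $\delta_3$; I must show that the class of $w_2$ in $\cn^n/\delta_3$ is zero. The reality condition $\ov W^{\perp} = \lambda^{-2} W$ gives $\lambda^{-2} v_0 + w_2 \in \ov W^{\perp}$; pairing in the $L^2$-Hermitian form against arbitrary $u \in W \subset \HH_+$ annihilates the $\lambda^{-2}$-contribution (since $u_{-2}=0$) and, after unwinding the conjugations, leaves exactly $\ip{w_2}{u_0}_{\cn} = 0$ for every $u_0 \in \delta_1$, where $\ip{\cdot}{\cdot}_{\cn}$ is the complex symmetric bilinear form. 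So $w_2 \in \delta_1^{\perp_{\cn}}$. Finally, Lemma~\ref{le:real} applied to $V = W/\lambda^3\HH_+$ with the pairing $\ip{v}{w}_s = \ip{v_0}{w_2}_{\cn} + \ip{v_1}{w_1}_{\cn} + \ip{v_2}{w_0}_{\cn}$ identifies $\delta_3 = \delta_1^{\perp_{\cn}}$, so $w_2 \in \delta_3$, whence $\lambda^2 w_2 \in W$ and $v_0 \in W$.

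The main obstacle I anticipate is clean bookkeeping of the three inner products at play---the $L^2$-Hermitian form on $\H$, the Hermitian form on $\cn^n$, and the complex symmetric bilinear form $\ip{\cdot}{\cdot}_{\cn}$ entering $\ip{\cdot}{\cdot}_s$---to make sure the reality step correctly lands $w_2$ in the bilinear perp of $\delta_1$, identified with $\delta_3$. The hypothesis $\rank\delta_1 = 1$ is consistent with the forced isotropy $\delta_1 \subset \delta_3 = \delta_1^{\perp_{\cn}}$ (automatic in rank one) and restricts the $w_2$-ambiguity to a one-dimensional quotient, which makes the final identification $w_2 \in \delta_3$ most transparent.
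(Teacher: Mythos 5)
Your overall strategy---show that $W$ contains its $S^1$-invariant limit $W^0=\sum_{i=0}^{r-1}\lambda^i\delta_{i+1}+\lambda^r\HH_+$ and conclude equality by a rank count---is essentially the paper's own proof, and the $r\le 2$ case together with the inclusions $\lambda\delta_2,\lambda^2\delta_3\subset W$ for $r=3$ are handled correctly. However, the one step where the reality hypothesis and the hypothesis $\rank\delta_1=1$ actually do any work---the derivation of $\ip{w_2}{u_0}_{\cn}=0$---is wrong. The condition $\lambda^{-2}v_0+w_2\in\ov{W}^{\perp}$ means orthogonality (in the $L^2$ Hermitian form) to $\ov{u}$ for every $u\in W$, not to $u$ itself; since $\ov{u}=\sum_i\lambda^{-i}\ov{u_i}$, the coefficient of $\lambda^{-2}$ in $\ov{u}$ is $\ov{u_2}$, which has no reason to vanish. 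The correct consequence of reality is therefore the coupled relation
$$
\ip{v_0}{u_2}_{\cn}+\ip{w_2}{u_0}_{\cn}=0\qquad(u\in W),
$$
and not $\ip{w_2}{u_0}_{\cn}=0$ on its own. As written, your argument would prove the proposition with no restriction on $\rank\delta_1$, and that stronger statement is false: Example \ref{ex:m4+r3} of the paper constructs real, $\nu$-invariant, non-$S^1$-invariant extended solutions of degree $3$ with $\rank\delta_1=2$, precisely because the reality condition \eqref{2m-3-reality} there only forces $\ip{H_0}{H_3}_{\cn}+\ip{H_1}{H_2}_{\cn}=0$ and not the two summands separately.

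The step can be repaired, and the repair is exactly where $\rank\delta_1=1$ enters. Take $u=v_0+\lambda^2w_2$ itself in the displayed relation; by symmetry of the bilinear form $\ip{\cdot}{\cdot}_{\cn}$ this gives $2\ip{v_0}{w_2}_{\cn}=0$, and since $\delta_1$ is spanned by $v_0$ this yields $w_2\in\delta_1^{\perp_{\cn}}=\delta_3$ (your identification of $\delta_3$ with $\delta_1^{\perp_{\cn}}$ via Lemma \ref{le:real} is correct). Then $\lambda^2w_2\in\lambda^2\delta_3\subset W$ and $v_0\in W$, as intended. Note that your closing remark attributes the role of $\rank\delta_1=1$ to ``restricting the $w_2$-ambiguity,'' which is not where it is needed: it is needed so that the cross term $\ip{v_0}{u_2}_{\cn}$ can only be tested against multiples of $v_0+\lambda^2w_2$, where symmetry kills it.
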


\begin{proof}
When $r=2$, this follows from Example \ref{ex:superhor}.

When $r=3$,
 $
W=\spa\{H_0+\lambda^2H_1\}+\lambda\delta_2+\lambda^2\delta_3+\lambda^3\HH_+\,.
$
However, the reality conditions imply that $H_1\in\delta_1^{\perp_{\cn}}=\delta_3$, and hence
$W = \delta_1 +\lambda\delta_2+\lambda^2\delta_3+\lambda^3\HH_+$,
which is $S^1$-invariant.
\end{proof}

The next two examples show that the hypotheses
of the last result are sharp.

\begin{example} \label{ex:m4+r3}
Let $m \geq 4$ and $r=3$.  Consider the extended solution 
$$
W = \spa\{H_0 + \lambda^2 H_1, H_2+ \lambda^2 H_3\}
+ \lambda \delta_2 + \lambda^2 \delta_3 + \lambda^3\HH_+\,,
$$ 
where $\ul 0\subset\delta_1=\spa\{H_0,H_2\}\subset\delta_2\subset\delta_3\subset\CC^{2m}$ is a superhorizontal sequence. 
When $H_1$ and $H_3$ are zero, we obtain the $S^1$-invariant limit
$W^0 = \delta_1 + \lambda \delta_2 + \lambda^2 \delta_3 + \lambda^3\HH_+\,.
$
Then $W$ is real if and only if the sequence $(\delta_i)$ is real and 
\begin{equation}\label{2m-3-reality}
\ip{H_0}{H_1}_{\cn}=\ip{H_2}{H_3}_{\cn}=\ip{H_0}{H_3}_{\cn}+\ip{H_1}{H_2}_{\cn}=0.
\end{equation}

The filtration of $W$ given by alternating Uhlenbeck and Segal steps is given by
\begin{align*}
W_1 &= \lambda^{-1}W \cap \HH_+ + \lambda\HH_+
	= \delta_2 + \lambda\HH_+,\\
W_2 &= \lambda^{-1}W \cap \HH_+ + \lambda^2\HH_+
= \delta_2 +\lambda \delta_3 + \lambda^2\HH_+;
\end{align*}
the resulting factorization of Theorem \ref{th:fact-r-odd} is
$\Phi = \Pi_{j=1}^i(\pi_{\alpha_j} + \lambda \pi_{\alpha_j}^{\perp})$ where 
\begin{align*}
\alpha_1 &= P_0 W_1 = \delta_2\,,\\
\alpha_2 &= \sum_{s=0}^1 S^1_s P_s W_2
= \pi_{\alpha_1}\delta_2 \oplus \pi_{\alpha_1}^{\perp}\delta_3 = \delta_3\,,\\
\alpha_3 &= \sum_{s=0}^2 S^2_s P_s W_2
= \spa\{H_0+\pi_{\delta_3}^{\perp}H_1, H_2+\pi_{\delta_3}^{\perp}H_3\}\,.
\end{align*}
The corresponding harmonic maps $\varphi_i  =
\Pi_{j=1}^i(\pi_{\alpha_j} - \pi_{\alpha_j}^{\perp})$ are:
\begin{align*}
\varphi_1 &= \delta_2:M^2 \to \O {2m}/\U m\,,\\
\varphi_2 &= \zeta_2^{\perp}:M^2 \to G_*(\cn) \quad
\text{where} \quad \zeta_2 = \delta_2^{\perp} \cap \delta_3\,,\\
\varphi &= \varphi_3
= \alpha_3 \oplus \zeta_2:
M^2 \to \O {2m}/\U m\,. 
\end{align*}
The harmonic map $\varphi^0$ corresponding to the $S^1$-invariant limit of $W$ is given by
$\varphi^0 = \delta_1 \oplus \zeta_2:M^2 \to \O{2m}/\U{m}$.

As an example, we find solutions $W$ with $m=5$.
Consider a natural decomposition $\cn^{10}=\cn^5_1\oplus\cn^5_2$ of $\cn^{10}$ into two copies of $\cn^5$; choose full totally isotropic meromorphic maps $H_0:M\to\cn^5_1$ and $H_2:M\to\cn^5_2$.
Set $\delta_1=\spa\{H_0,H_2\}$. Next, pick $H_4:M\to((\delta_1)_{(1)})^{\perp_{\cn}} \setminus (\delta_1)_{(1)}$ meromorphic with $\ip{H_4}{H_4}_{\cn}=0$. Setting $\delta_2=(\delta_1)_{(1)}$ and $\delta_3=\delta_1^{\perp_{\cn}}$ defines a real superminimal sequence giving an $S^1$-invariant harmonic map $M \to \O {10}/\U 5$.

Next, choose $H_1 = K_1 + a H_2^{(4)}$ and
$H_3 = b H_0^{(4)} + K_2$.  Then the reality conditions
\eqref{2m-3-reality} are satisfied if
$K_1:M\to\spa\{H_0\}^{\perp_{\cn}} \cap \cn^5_1$, 
$K_2:M\to\spa\{H_2\}^{\perp_{\cn}} \cap \cn^5_2$,
and
$$
b\ip{H_0}{H_0^{(4)}}_{\cn} + a\ip{H_2}{H_2^{(4)}}_{\cn} = 0.
$$
Neither $\ip{H_0}{H_0^{(4)}}_{\cn}$ nor $\ip{H_2}{H_2^{(4)}}_{\cn}$ are identically zero, so the last equation has solutions with neither $a$ nor $b$ identically zero and we have a real solution $W$.

Note that $W$ is $S^1$-invariant if and only if $H_1$ and $H_3$ are sections of $\delta_3$\,; this holds precisely when $a=b=0$. 
\end{example}  

\subsection{Harmonic maps to the symplectic group and its quotients}

Let $J$ denote the conjugate linear map given by left multiplication by the unit quaternion $j$ on $\cn^{2m}\cong\hn^m$. In a similar way to Definition \ref{def:real}, we say that $W\in\Gr_r$ is \emph{symplectic (of degree $r$)} if $J W=\lambda^{1-r}W$. We denote by $\Gr_r^J$ the set of elements of $\Gr_r$ which are symplectic of degree $r$; similarly, we denote by $\Omega_r\U{2m}^J$ the elements of $\Omega_r\U{2m}$ satisfying $J\Phi J^{-1}=\lambda^{-r}\Phi$; when $W=\Phi\H_+$\,, we have
$W\in\Gr_r^J$ if and only if $\Phi\in\Omega_r\U{2m}^J$. If $\Phi\in\Omega_r\U{2m}^J$, then $\Phi_{-1}$ or $i\Phi_{-1}$ is in $\Sp m$ depending on whether $r$ is even or odd. The results described in \S\S\ref{subsec:SOn} --- \ref{subsec:ocs} have obvious analogues when $\O n$ is replaced by $\Sp m$.
 
For the first two subsections, where $r$ is even, this was done in \cite{pacheco-sympl}, obtaining harmonic maps into $\Sp m$ and quaternionic Grassmannians, see also \cite{dong-shen} and \cite{he-shen2}.  Regarding the new results in \S \ref{subsec:ocs} for $r$ odd, the first term in the factorization described in Theorem \ref{th:fact-r-odd} will correspond to a holomorphic map into the Hermitian symmetric space $\Sp m/\U m$.  In all cases, our methods give new explicit formulae for the harmonic maps and their extended solutions.

As in \S\ref{subsec:normreal}, we can normalize any extended solution $\Phi:M\to\Omega_r\U{2m}^J$, and easily obtain the following bounds on the uniton number $r$.
\begin{proposition}\label{pr:normalsymp}
{\rm (i)} Given a harmonic map $\varphi:M\to\Sp m$ of finite uniton number, there is an extended solution $\Phi:M\to\Omega_r\U{2m}^J$ with $\Phi_{-1}=\pm\varphi$ or $\Phi_{-1}=\pm\ii\varphi$ and $r\leq 2m-1 ;$

{\rm (ii)} if $\varphi$ takes values in $\Sp m/\Sp k\times\Sp{m-k}$, then $\Phi$ can be chosen with $r\leq 4\min\{k,m-k\};$
 if $2k=m$, then this can be improved to $r\leq 4k-2$.
\qed \end{proposition}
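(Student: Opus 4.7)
The plan is to mimic the orthogonal/real case treated in Proposition \ref{pr:max-degree} and its corollaries, transferring each step from the $\rn$-symmetry to the quaternionic $J$-symmetry. For part (i), I would first produce an associated extended solution in $\Omega_{\alg}\U{2m}^J$: given $\varphi:M\to\Sp m$ and a base point $z_0\in M$, choose $Q\in\Omega\U{2m}^J$ (say, of the form $\pi_Y+\lambda\pi_{JY}$ for a suitable maximal $J$-stable decomposition) with $Q_{-1}=\varphi(z_0)$ up to a factor of $\pm 1, \pm\ii$. By Remark \ref{rem:fi-uniton} there is an extended solution $\Psi$ associated to $\varphi$ with $\Psi(z_0)=Q$; since the symmetry $J\Psi J^{-1}=\lambda^{-r}\Psi$ holds at $z_0$ and both sides satisfy the same ODE $\Psi^{-1}\Psi_z=(1-\lambda^{-1})A_z^\varphi$, uniqueness of extended solutions with a given initial condition forces $\Psi$ to lie in $\Omega_{\alg}\U{2m}^J$ on all of $M$. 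Multiplying by a suitable power of $\lambda$ yields $\Phi:M\to\Omega_r\U{2m}^J$ with $\Phi_{-1}=\pm\varphi$ or $\pm\ii\varphi$.

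Next I would carry out the normalization for the symplectic setting, in direct analogy with \S\ref{subsec:normreal}. Write $W=\Phi\H_+$ and $A_i$ as in \eqref{Ai-decomp}. The $J$-symmetry $JW=\lambda^{1-r}W$ yields a conjugate-linear isomorphism $A_i\cong A_{r-i}$, so if some $A_i=0$ then also $A_{r-i}=0$. One then argues as in the proof of Proposition \ref{pr:max-degree}: if $i\in\{r,(r+1)/2,r/2\}$ a direct reduction applies using a constant symplectic loop $\pi_Y+\lambda^{-1}\pi_Y^\perp$ (with $Y$ maximal $J$-stable), while for intermediate indices one uses the auxiliary space
$$
V=\lambda^{i+1-r}(W\cap\lambda^{r-i-1}\HH_+)+\lambda^{2-i}(W\cap\lambda^{i-1}\HH_+)+\lambda^2\HH_+\,,
$$
exactly as in Pacheco \cite{pacheco-sympl}, and checks that $V\in\Gr_2^J$ is constant, so $V=\eta\HH_+$ with $\eta\in\Omega_2\U{2m}^J$. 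Iterating this reduction replaces $\Phi$ by $\eta^{-1}\Phi\in\Omega_s\U{2m}^J$ with all $A_j\neq 0$. The rank identity \eqref{sum-rank} then gives $s+1\leq\sum_j\rank A_j=2m$, proving (i).

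For part (ii), the additional hypothesis is that $\varphi$ lands in the symmetric space $\Sp m/\Sp k\times\Sp{m-k}$; I would arrange the initial condition $Q$ to lie in $\Omega\U{2m}^{\nu,J}$, so that by the same uniqueness argument $\Phi\in\Omega_r\U{2m}^{\nu,J}$. Under the isomorphism $\Phi:\CC^{2m}\to W\ominus\lambda W$, the $\pm 1$-eigenspace decomposition $\CC^{2m}=\varphi\oplus\varphi^\perp$ corresponds to the even/odd splitting of \eqref{even-odd-decomp}, and the two subspaces have ranks $2k$ and $2(m-k)$. Combining $J$-symmetry (which forces $\rank A_i=\rank A_{r-i}$) with $\nu$-invariance (which gives $\sum_j\rank A_{2j}=2k$ and $\sum_j\rank A_{2j+1}=2(m-k)$), normalization of both eigenparts yields $r\leq 4\min\{k,m-k\}$.

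The hardest step is the sharpening to $r\leq 4k-2$ when $2k=m$; here I expect the main obstacle. In this borderline case normalization alone only gives $r\leq 4k$, and one must show that the ``middle'' index $i=r/2$ always admits a further reduction. The plan is to establish a symplectic analogue of Lemma \ref{le:isotropic}: a maximal $J$-isotropic holomorphic subbundle $X\subset\CC^{2m}$ with $\dim X_{(1)}\leq\dim X+1$ must be constant. Applying this to $\delta_{r/2+1}$ (or the corresponding middle Segal uniton) would show that either some $A_i$ on the middle block vanishes, enabling one more reduction step of type (ii) via $\pi_Y+\lambda^{-2}\pi_Y^\perp$ with $Y$ constant maximal $J$-isotropic, or else a rank increment at the middle forces at least two extra units in $\sum\rank A_j$, improving the count by $2$. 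Either way one obtains $r\leq 4k-2$, completing the proof.
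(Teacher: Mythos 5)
Your overall strategy---transferring the normalization machinery of \S\ref{subsec:normreal} and Proposition \ref{pr:max-degree} to the $J$-symmetry---is exactly what the paper intends (it states the result with only a pointer to \S\ref{subsec:normreal} and to \cite{pacheco-sympl}), and most of your sketch is sound: the initial-condition/uniqueness argument producing $\Phi\in\Omega_r\U{2m}^J$, the Pacheco-style reduction when some $A_i=0$, the count $s+1\le\sum_j\rank A_j=2m$ for (i), and the even/odd rank count giving $r\le 4\min\{k,m-k\}$ in (ii) all go through.

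The final step, however, rests on a false statement. The ``symplectic analogue of Lemma \ref{le:isotropic}'' you propose---that a maximal $J$-isotropic holomorphic subbundle $X\subset\CC^{2m}$ with $\dim X_{(1)}\le\dim X+1$ must be constant---fails. The proof of Lemma \ref{le:isotropic} depends on the form $\rho_p(v,w)=\ip{\pa\sigma(p)}{w}_{\cn}$ being \emph{skew}-symmetric on an isotropic subbundle, which forces $X_p/\ker F$ to be even-dimensional; for an $\omega$-Lagrangian subbundle the corresponding form $\omega(\pa\sigma,w)$ is \emph{symmetric} (differentiating $\omega(\sigma,\tau)\equiv 0$ gives $\omega(\pa\sigma,\tau)=-\omega(\sigma,\pa\tau)=\omega(\pa\tau,\sigma)$), so the parity obstruction disappears. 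Concretely, if $h:M\to\CP{2m-1}$ is full and totally $J$-isotropic, then $h_{(m-1)}$ is a non-constant maximal $J$-isotropic subbundle with $\dim (h_{(m-1)})_{(1)}=\dim h_{(m-1)}+1$; this is precisely the example the paper gives immediately after the proposition, and it is also why part (i) stops at $2m-1$ rather than the $2m-2$ of Proposition \ref{pr:max-degree}(iii). Fortunately the sharpening to $4k-2$ needs no such lemma: your claim that normalization alone only gives $r\le 4k$ in the borderline case is not correct. When $2k=m$, both $\varphi$ and $\varphi^{\perp}$ have complex rank $2k$, so whichever of them corresponds to the $s+1$ even-indexed blocks $A_0,A_2,\dots,A_{2s}$ of a normalized solution, one gets $s+1\le 2k$ and hence $r=2s\le 4k-2$ directly---the same counting that yields $r\le 2k-1$ in the complex Grassmannian corollary. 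You should replace your final paragraph by this observation.
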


As in the $\O n$ case, it follows easily that, if $\varphi^2=I$, then this extended solution $\Phi$ can be chosen to take values in $\Omega_r\U{2m}^{\nu,J}=\Omega_r\U{2m}^J\cap\Omega_r\U{2m}^{\nu}.$
Let $\omega$ be the standard complex skew-symmetric form on $\cn^{2m}$  preserved by $\Sp m$. A subspace $V\subset\cn^{2m}$ is said to be \emph{$J$-isotropic} if $\omega(v,w)=0$ for all $v,w\in V$. To construct examples of extended solutions into $\Omega_r\U{2m}^J$, we may proceed as in the $\O n$-case, replacing `isotropic' by `$J$-isotropic', the symmetric bilinear form $\ip{\cdot}{\cdot}_{\cn}$ by $\omega$, and complex conjugation by multiplication by $J$.

We define \emph{$J$-isotropy order} in an analogous way to the definition of isotropy order in \S\ref{subsec:s1invariant}. However, this time, for a map $f:M \to \CP{2m-1}$, the isotropy order $t$ is even and $t \leq 2m-2$.   A full holomorphic map
$f:M \to \CP{2m-1}$ is called \emph{totally $J$-isotropic} if it has the maximum possible finite $J$-isotropy order $2m-2$.  Equivalently, $f$ is totally isotropic if $G^{(2m-1)}(f) = Jf$.   When $M = S^2$, all holomorphic maps of finite $J$-isotropy order are given by an algorithm in \cite{bahy-wood-HPn}.

\begin{example} Let $h:M\to\CP{2m-1}$ is a full totally
$J$-isotropic holomorphic map. Then, as in \S\ref{subsec:s1invariant}, we can construct an $S^1$-invariant extended solution $W =\Phi\H_+ =  \sum_{i=0}^{r-1} \lambda^i\delta_{i+1}  + \lambda^r \HH_+$
from the superhorizontal sequence 
$$
\ul 0\subset\delta_1\subset\delta_2\subset\dots\subset\delta_{2m-1}\subset\CC^{2m},
$$
where $\delta_i=h_{(i-1)}$. Then $\Phi$ takes values in $\Omega_{2m-1}\U{2m}^{\nu,J}$ and
$\varphi=\Phi_{-1}:M \to \Sp m/\U m \subset \U{2m}$ is the harmonic map of (minimal) uniton number $2m-1$ given by \eqref{phi-nested} . 
\end{example}

There is no analogue of Propositions \ref{pr:r2} or of \ref{pr:n4} as the next two examples show.
Let $H_0, H_1, H_2$ be meromorphic sections of $\CC^{2m}$. 

\begin{example} Let $r=2$ and $m \geq 2$. Choose $H_0$ to have $J$-isotropy order at least $2$ and set $W = \spa\{H_0+\lambda H_1\} + \lambda \delta_2 + \lambda^2\HH_+$ where $\delta_2$ is the orthogonal complement of $\delta_1=\spa\{H_0\}$ with respect to $\omega$.
Then $W$ is a symplectic extended solution, so that $W=\Phi\H_+$ for an extended solution $\Phi:M \to \Omega_2\U{2m}^J$, thus giving a harmonic map
$\varphi=\Phi_{-1}: M \to \Sp{m}$.  We can take $H_1$ not lying in $\delta_2$, then $\varphi$ does not lie in a quaternionic Grassmannian.
\end{example}

\begin{example} Let $m=2$ and $r=3$. Set $X=\spa\{H_0+\lambda H_1\}$ and let $W$ be the extended solution given by \eqref{W}, i.e.,  
$$
W=\spa\{H_0+\lambda H_1\}+\lambda(H_0+\lambda H_1)_{(1)}+\lambda^2(H_0)_{(2)}+\lambda^3\HH_+\,.
$$
Then $W$ is symplectic if and only if
(i) $H_0$ is totally $J$-isotropic, and
(ii)  $\omega(H_0,H_1^{(1)})+\omega(H_1,H_0^{(1)})=0$.
 Following the algorithm in \cite{bahy-wood-HPn}, we can
 construct all $H_0$ satisfying (i); we may then choose $H_1$ satisfying the linear equation (ii). Provided (iii) $H_1$ does not lie in $(H_0)_{(1)}$, the extended solution
 $W$ is not $S^1$-invariant.
 
As a specific example, starting with $F_0 = (1,z)$, the algorithm of \cite{bahy-wood-HPn} yields the totally $J$-isotropic map $H_0:S^2 \to \cn^4$ of $J$-isotropy order $2$ given by
$H_0= \bigl(z, \frac{1}{2}z^2, 1, -\frac{1}{6}z^3 \bigr)$.
Set $H_1 = (0,0,0,a)$ where $a$ is a non-zero constant.  Then (ii) and (iii) are satisfied, so we obtain a harmonic map
$\varphi:S^2 \to \Sp 2\subset\U 4$ of (minimal) uniton number $3$ which does not lie in $\Sp 2/\U 2$. 
\end{example}

\begin{example} Let $m=r=3$ and choose $H_0$ full and totally $J$-isotropic. Set 
$$
W=\spa\{H_0+\lambda^2H_1\}+\lambda(H_0)_{(1)}+\lambda\spa\{H_2\}+\lambda^2(H_0)_{(2)}+\lambda^2(H_2)_{(1)}+\lambda^3\HH_+\,.
$$
Then $W$ is a $\nu$-invariant extended solution which is symplectic if and only if $H_2$ is a section of $(H_0)_{(3)}$. With these choices, $W$ represents a harmonic map into $\Sp 3/\U 3$ of (minimal) uniton number $3$. Provided $H_1$ is not a section of $(H_0)_{(2)}+(H_2)_{(1)}$, the extended solution $W$ is not $S^1$-invariant. 
\end{example}

In general, we have a similar result to Corollary \ref{cor:fact-real}; in conclusion, we have found explicit algebraic formulae which give all harmonic maps of finite uniton number from a Riemann surface to a classical compact Lie group or inner symmetric space of it.

\end{document}